\newcommand{\norm}[1]{\lVert #1 \rVert}
\newcommand{\sprod}[2]{\langle #1, #2 \rangle}
\newcommand{\Op}[0]{\mathcal{L}}
\newcommand{\R}[0]{\mathbb{R}}
\newcommand{\N}[0]{\mathbb{N}}
\newcommand{\C}[0]{\mathbb{C}}
\newcommand{\ftau}[0]{f^{\boldsymbol{\tau}}}
\newcommand{\ml}[0]{\mathrm{e}_{\alpha,\beta}}
\newtheorem{theorem}{Theorem}
\newtheorem{lemma}{Lemma}
\newtheorem{remark}{Remark}
\newtheorem{definition}{Definition}
\newtheorem{algorithm}{Algorithm}
\author{Tobias Danczul\thanks{Institute for Analysis and Scientific Computing (TU Wien),
		Wiedner Hauptstrasse 8-10, 1040 Wien, Austria.}
	\and
	Clemens Hofreither\thanks{Johann Radon Insitute for Computational and Applied Mathematics (RICAM), Altenbergerstr.~69, 4040 Linz, Austria.}
	\and
	Joachim Sch\"oberl\footnotemark[1]
}
\title{A Unified Rational Krylov Method for Elliptic and Parabolic Fractional Diffusion Problems}
\begin{document}
	
\maketitle

\begin{abstract}
	We present a unified framework to efficiently approximate solutions to fractional diffusion problems of stationary and parabolic type. After discretization, we can take the point of view that the solution is obtained by a matrix-vector product of the form $\ftau(L)\mathbf{b}$, where $L$ is the discretization matrix of the spatial operator, $\mathbf{b}$ a prescribed vector, and $\ftau$ a parametric function, such as a fractional power or the Mittag-Leffler function. In the abstract framework of Stieltjes and complete Bernstein functions, to which the functions we are interested in belong to, we apply a rational Krylov method and prove uniform convergence when using poles based on Zolotar\"ev's minimal deviation problem. The latter are particularly suited for fractional diffusion as they allow for an efficient query of the map $\boldsymbol{\tau}\mapsto \ftau(L)\mathbf{b}$ and do not degenerate as the fractional parameters approach zero.	
	
	We also present a variety of both novel and existing pole selection strategies for which we develop a computable error certificate. Our numerical experiments comprise a detailed parameter study of space-time fractional diffusion problems and compare the performance of the poles with the ones predicted by our certificate.
	
\end{abstract}
\textit{Keywords}: fractional diffusion, numerical methods, rational Krylov method, rational approximation, Zolotar\"ev problem

\section{Introduction}
The theoretical groundings of fractional PDEs together with their confirmation in scientific experiments has sparked a remarkable amount of research activity across all engineering disciplines. Their field of application is broadly scattered \cite{Sun2018} and comprises material science, image processing, and finance, to name only a few.

The precise derivation of fractional powers of differential operators is a nontrivial task in itself. Several competing definitions are advocated by the literature \cite{Lischke2020} among which we focus on the spectral definition in space and the Caputo fractional derivative in time.

Provided any popular discretization scheme, such as the finite element or finite difference method, the approximation of the model under investigation typically boils down to the computation of a matrix-vector product of the form $\ftau(L)\mathbf{b}$, where
\begin{itemize}
    \item $L\in\R^{N\times N}$ is a sparse, positive definite discretization matrix of the integer-order differential operator in question,
	\item $\textbf{b}\in\R^N$ the coefficient vector of some prescribed data,
	\item $\ftau$ a matrix function that corresponds to the particular problem at hand and depends on a collection of parameters encoded in the vector $\boldsymbol{\tau}\in\Theta\subset\R^d$, $d\in\mathbb{N}$. Throughout this manuscript, we focus on the following configurations of $\ftau$.
		\begin{itemize}
            \item The scenario $\ftau(\lambda) = f^s(\lambda) = \lambda^s$, $s\in\Theta=[0,1]$, corresponds to the application of a fractional operator and is of interest in, e.g., time stepping schemes for time-dependent problems generated by a nonlocal operator in space.
			\item In stationary fractional diffusion problems one is interested in the choice $\ftau(\lambda) = f^s(\lambda) = \lambda^{-s}$, $s\in\Theta=[0,1]$.
			\item Discrete solutions to homogeneous parabolic problems involving a fractional operator in space at a fixed time $t>0$ can be concisely written via $e^{-tL^s}\mathbf{b}$, such that $\ftau(\lambda) = e^{-t\lambda^s}$, $\boldsymbol{\tau} = (t,s)\in\Theta=\R^+\times[0,1]$.
			\item A generalization of the latter is provided by space-time fractional diffusion problems, where the exponential is replaced by the generalized Mittag-Leffler function
			\begin{align*}
				E_{\alpha,\beta}(\lambda) = \sum_{k=0}^\infty \frac{\lambda^k}{\Gamma(\alpha k + \beta)},
			\end{align*}
			$\Gamma$ referring to the gamma function. In this scenario one has $\ftau(\lambda) = E_{\alpha,\beta}(-t^{\alpha}\lambda^s)$ with $\boldsymbol{\tau} = (\alpha, \beta, t, s)\in\Theta = (0,1]\times[\alpha,\infty)\times\R^+\times[0,1]$.
		\end{itemize}   
\end{itemize}
The \textit{exact} evaluation of $\ftau(L)\mathbf{b}$ typically involves the computation of all eigenvectors of $L$ and quickly exceeds a reasonable amount of computational capacity. Direct rational approximation methods have been employed in e.g., \cite{Harizanov2018, BURA:2019,Pasciak2020,Vabishchevich2020} to alleviate the computational costs; see also \cite{Wohlmuth2021}. The idea is to replace $\ftau$ by a suitable rational function $r^{\boldsymbol{\tau}}$ such that $r^{\boldsymbol{\tau}}(L)\mathbf{b}\approx\ftau(L)\mathbf{b}$. Although elementary, this concept can by found in a variety of modern approximation schemes for the stationary fractional diffusion problem \cite{H:2020,DH2021} among which we discuss a few in the following.

One class of methods is based on the prominent Caffarelli-Silvestre extension \cite{Caffarelli2007,Stinga2010,Cabre2010,Capella2011,Braendle2013}, whose susceptibility to standard finite element methods has evoked a large amount of research activity \cite{Nochetto2015,Ainsworth2018,Banjai2018}. Originally proposed for the fractional Laplacian, these schemes allow for generalizations to the time-dependent regime \cite{Nochetto2016, Melenk2020}. 

Initiated by \cite{Bonito2015}, a number of quadrature methods have been presented for the stationary fractional diffusion problem \cite{Bonito2018b,Bonito2020,Antil2019}; see also \cite{Li2017,Li2018}. The idea is to rewrite $\ftau(L)$ via Cauchy's formula as a contour integral over a parametrized family of local problems. If $\ftau(\lambda) = \lambda^{-s}$, the contour can be chosen as the negative real line, in which case the integral representation is known as Balakrishnan's formula \cite{Balakrishnan1960}. This approach has also been adapted to parabolic problems \cite{Bonito2017,Bonito2017b,Rieder2020}. Since real-valued integration paths cause highly oscillatory behaviour, however, one typically resorts to complex contours which in turn necessitates solutions to complex-valued problems, even if $L$ and $\mathbf b$ are real.

Another approach for approximating $\ftau(L)\mathbf{b}$ are the rational Krylov methods (RKM) \cite{Novati2017, Aceto2019, Novati2011, Benzi2020}, to which, in view of \cite{DH2021}, a variety of reduced basis methods \cite{Antil2019,Bonito2020,DS:2019,DS:2020} belong. The RKM extracts a low-dimensional surrogate $\mathbf{u}_{k+1}\approx \ftau(L)\mathbf{b}$ from a search space of the form $\mathcal{Q}^\Xi_{k+1}(L,\mathbf{b}) := \operatorname{span}\{q_k(L)^{-1}\mathbf{b},\dots,L^kq_k(L)^{-1}\mathbf{b}\}$, where $\Xi = \{\xi_1,\dots,\xi_k\}\subset\C$ is a set of a-priori selected parameters, the so-called \textit{poles}, and $q_k$ the monic polynomial of degree $k$ with roots in $\Xi$. The question of optimal poles highly depends on the particular function $\ftau$ and thus also on the involved parameters encoded in $\boldsymbol{\tau}$. In light of the fact that fractional parameters are typically employed to fit the mathematical model to the observed data \cite{Sprekels2016, Grau2014}, it is desirable to choose the poles independently of these quantities. Furthermore, if both $L$ and $\mathbf{b}$ are real, it is worthwhile to restrict $\Xi$ to the real line to avoid complex arithmetic.

In this paper, we present and analyze both novel and existing pole selection strategies for RKMs to approximate the family of parametric matrix-vector products $\{\ftau(L)\mathbf{b}:\tau\in\Theta\}$ uniformly in $\boldsymbol{\tau}$. To this end, we show that the functions we are interested in are either of Laplace-Stieltjes, Cauchy-Stieltjes, or complete Bernstein type, i.e., they admit a representation of the form
\begin{align}
\label{eq:stieltjes}
	\ftau(\lambda) = \int_{0}^{\infty} \mu^{\boldsymbol{\tau}}(\zeta)g(\zeta,\lambda)\, d\zeta,\qquad g(\zeta,\lambda) \in \{e^{-\zeta\lambda}, (\zeta+\lambda)^{-1}, \lambda(\zeta+\lambda)^{-1}\},
\end{align}
where $\mu^{\boldsymbol{\tau}}$ is a real valued function such that the integral is absolutely convergent. Inspired by \cite{Robol2020}, we provide, in this abstract framework, a certified upper bound for the rational Krylov error in dependence of its poles $\Xi = \{\xi_1,\dots, \xi_k\}$. Its rate of decay for increasing values of $k$ is dictated by the maximal deviation of
\begin{align*}
	r_\Xi(\lambda) = \prod_{j=1}^k \frac{\lambda + \xi_j}{\lambda - \xi_j}
\end{align*}
over the spectral interval $\Sigma$ of $L$. Minimizing $\norm{r_\Xi}_{L_\infty(\Sigma)}$ leads to Zolotar\"ev's well-known minimal deviation problem whose analytical solution provides a $\boldsymbol{\tau}$-independent selection of \textit{real} poles and allows for an efficient querying of the solution map $\boldsymbol{\tau} \mapsto \ftau(L)\mathbf{b}$. Extending the works \cite{DS:2019,DS:2020,DH2021}, where pointwise convergence in the parameter $s\in(-1,1)$ for the special case $\ftau(\lambda) = \lambda^s$ was proven, we show exponential convergence rates which are uniform in $\boldsymbol{\tau}$. 

A computational inconvenience of Zolotar\"ev's poles is the fact that they are not nested. Even though a variety of nested pole sequences exist, guaranteed error bounds are typically not available. To address this difficulty, we develop an algorithm to compute $\norm{r_\Xi}_{L_\infty(\Sigma)}$ which in turn allows us to assess the quality of these poles when no theoretical bounds for the error are available. Similarly to \cite{Druskin2010, Druskin2011, Guettel2013b}, we also present two novel pole selection algorithms by greedily minimizing $\norm{r_\Xi}_{L_\infty(\Sigma)}$. The first one only requires to inject the extremal eigenvalues of $L$ and automatically detects the global extrema of $r_\Xi$ in $\Sigma$. In each step, the error estimate is computed as a by-product and thus directly available. The second scheme is fully automatic and generates information about the spectral region without any user-provided data. In our numerical experiments, we perform a detailed parameter study to illuminate the impact of changing values of $\boldsymbol{\tau}$ on the Krylov approximation. We compare different pole selection strategies and discuss their performance with the one predicted by our error certificate. 

In view of \cite{DH2021}, the presented results can be seen as an improvement and extension of \cite{DS:2019, DS:2020}. Our analytical findings show that the results of \cite{Robol2020} admit a natural generalization to complete Bernstein functions. For a certain class of Laplace-Stieltjes functions, we further justify the experimental observation in \cite{Robol2020} that the RKM error decays with purely exponential convergence rates when Zolotar\"ev's pole distribution is used (cf. Theorem \ref{thm:lsimprovement}).

We end this introduction with an overview of the structure of this manuscript. In Section \ref{sec:fracdiff}, we establish a close relation between solutions to fractional diffusion problems and parametric families of matrix-vector products. We introduce the notion of Stieltjes and Bernstein theory and show that the functions of interest can be cast in this unified theoretical framework. After a concise survey of the RKM, we provide, in Section \ref{sec:RKM}, the essential theoretical preparations for the development of our numerical and analytical findings. The core of this paper is provided in Section \ref{sec:rkmfracdiff}, where the analytical key results, the error certificate, and the novel pole selection algorithms are presented. Finally, in Section \ref{sec:numerics}, we demonstrate the effectiveness of the developed tools by means of numerical experiments for a few space-time fractional diffusion problems. 
Some proofs are given in Appendix~\ref{sec:appendix}.

\section{The notion of Stieltjes and complete Bernstein functions in fractional diffusion}
\label{sec:fracdiff}
Let $\Omega\subset \R^d$, $d\in\mathbb{N}$, be a bounded Lipschitz domain, $D\in L_\infty(\Omega;\R^{d\times d})$ symmetric and uniformly positive definite, and $c\in L_\infty(\Omega)$ with $c\geq 0$ almost everywhere. We shall be concerned with the fractional powers of the self-adjoint operator
\begin{align}
	\label{eq:operator}
	\Op u := -\operatorname{div}(D\nabla u) + cu
\end{align}
defined by spectral expansion
\begin{align*}
	\Op^s u = \sum_{j=1}^\infty \lambda_j^s\sprod{u}{\varphi_j}_{L_2}\varphi_j,
\end{align*}
where $\sprod{\cdot}{\cdot}_{L_2}$ denotes the $L_2$-inner product on $\Omega$ and $(\lambda_j,\varphi_j)_{j=1}^\infty$ the collection of eigenvalues and eigenfunctions of \eqref{eq:operator} equipped with homogeneous Dirichlet boundary conditions. We are particularly interested in numerical approximations of solutions to fractional diffusion equations of the following form: Given $(\alpha,s)\in[0,1]^2$, $T>0$, a forcing term $b\in C([0,T], L_2(\Omega))$, and an initial datum $u_0\in L_2(\Omega)$, we seek $u:[0,T]\to H_0^s(\Omega)$ such that
\begin{align}
	\label{eq:FracSpaceTimeEq}
	\partial_t^{\alpha} u + \mathcal{L}^su = b \text{ in }\Omega\times (0,T], \quad u(t)|_{\partial\Omega} = 0\text{ in }(0,T], \quad u(0) = u_0 \text{ in }\Omega.
\end{align}
The fractional derivative in time $\partial_t^\alpha$ is understood as \textit{Caputo fractional derivative} of order $\alpha$ with respect to $t$ \cite{Podlubny1999}, that is,
\begin{align*}
	\partial_t^\alpha u(x,t) = \frac{1}{\Gamma(1-\alpha)}\int_0^t\frac{1}{(t-\tau)^\alpha}\frac{\partial u(x,\tau)}{\partial \tau}\, d\tau, \rlap{\quad\qquad$\alpha\in[0,1).$}
\end{align*}
By convention, we set $\partial_t^1 := \partial_t$. If $\alpha = 0$, $\partial_t^\alpha u = \partial_t^0 u = u$ and \eqref{eq:FracSpaceTimeEq} is understood in the stationary sense, in which case the initial condition is neglected.

Provided a finite element space $V_h\subset H_0^1(\Omega)$, we can approximate \eqref{eq:FracSpaceTimeEq} by a fractional differential equation obtained by the spatial discretization. For concreteness, let $(\psi_j^h)_{j=1}^N$ label a basis of $V_h$ and introduce the mass and stiffness matrix by
\begin{align}
	\label{eq:femmatrices}
	M_{ij} = \sprod{\psi_j^h}{\psi_i^h}_{L_2},\qquad A_{ij} = \sprod{D\nabla\psi_j^h}{\nabla\psi_i^h}_{L_2} + c\sprod{\psi_j^h}{\psi_i^h}_{L_2},
\end{align}
respectively. We utilize the discrete eigenfunction method to discretize the spatial fractional operator, i.e., we replace $\Op^{s}$ by its finite element approximation $L^{s}$, where $L := M^{-1}A$. The arising semi-discrete system of equations reads as
\begin{align}
	\label{eq:ODE}
	\partial_t^\alpha \mathbf{u}(t) + L^s\mathbf{u}(t) = \mathbf{b}(t), \quad \mathbf{u}(0) = \mathbf{u}_0,
\end{align} 
where $\mathbf{b}(t)$, $\mathbf{u}_0\in\R^N$ label the coefficient vector of the $L_2$-orthogonal projection of $b(t)$ and $u_0$ onto $V_h$, respectively. Solutions to \eqref{eq:ODE} can be concisely written as a matrix-vector product. Clearly, if $\alpha = 0$, we have $\mathbf{u} = (I + L^{s})^{-1}\mathbf{b}$ as claimed. If $\alpha > 0$, we can consult \cite[p. 140]{Podlubny1999} to affirm
\begin{align*}
	\mathbf{u}(t) = E_{\alpha,1}\left(-t^\alpha L^s\right)\mathbf{u}_0 + \int_0^t(t-\zeta)^{\alpha -1}E_{\alpha,\alpha}\left(-(t-\zeta)^\alpha L^s\right)\mathbf{b}(\zeta)\, d\zeta.
\end{align*}
If $\mathbf{b}(t)$ is a vector-valued polynomial in $t$, the integral can be reinterpreted as Mittag-Leffler function itself. As indicated by \cite{Novati2011}, there holds for any $\mathbf{b}(t) = \sum_{k=0}^p t^k\mathbf{v}_k$, $\mathbf{v}_k\in\R^N$,
\begin{align*}
	\mathbf{u}(t) = E_{\alpha,1}\left(-t^\alpha L^s\right)\mathbf{u}_0 + \sum_{k=0}^p\Gamma(k+1)t^{\alpha+k}E_{\alpha,\alpha+k+1}\left(-t^\alpha L^s\right)\mathbf{v}_k.
\end{align*}
In particular, solutions to standard (local) parabolic problems are recovered if one sets $\alpha = s = 1$. In view of these results, our ambition is to propose an efficient approximation for the family of matrix-vector products $\{\ftau(L)\mathbf{b} : \mathbf{\tau}\in \Theta\}$, where $\ftau$ represents a prototypical function arising from fractional diffusion problems and $\boldsymbol{\tau}\in\Theta$ the collection of problem-specific parameters. An important theoretical tool we use throughout this manuscript is the fact that, depending on the particular problem, $\ftau$ admits an integral representation with a kernel of exponential or resolvent type.

\subsection{Stieltjes and complete Bernstein functions}
In this section, we investigate functions of type \eqref{eq:stieltjes}, see \cite{Schilling2009}, and establish a connection to fractional diffusion problems.
\begin{definition}
	A function $f:\R^+\to\R$ is said to be a Laplace-Stieltjes function if
	\begin{align}
		\label{eq:LaplaceStieljesInt}
		f(\lambda) = \int_0^\infty e^{-\zeta\lambda}\mu_L(\zeta)\, d\zeta
	\end{align}
	for some positive real-valued function $\mu_L$ such that the integral is absolutely convergent.
	We denote the set of all Laplace-Stieltjes function with $\mathcal{LS}$.
\end{definition}

\begin{definition}
	A function $f:\R^+\to\R$ is said to be a Cauchy-Stieltjes function if
	\begin{align}
		\label{eq:CauchyStieljesInt}
		f(\lambda) = \int_0^\infty \frac{\mu_C(\zeta)}{\zeta+\lambda}\, d\zeta
	\end{align}
	for some positive real-valued function $\mu_C$ such that the integral is absolutely convergent. We denote the set of all Cauchy-Stieltjes functions with $\mathcal{CS}$.
\end{definition}

An important observation is the fact that each Cauchy-Stieltjes function $f$ of the form \eqref{eq:CauchyStieljesInt} can be rewritten as a Laplace-Stieltjes function \eqref{eq:LaplaceStieljesInt} with 
\begin{align*}
	\mu_L(\zeta) = \int_0^\infty e^{-\zeta s}\mu_C(s)\, ds,
\end{align*}
see e.g., \cite{Schneider1996}. Therefore, we have $\mathcal{CS}\subset\mathcal{LS}$.

\begin{definition}
	A function $f:\R^+\to\R$ is said to be a complete Bernstein function if
	\begin{align*}
		f(\lambda) = \int_0^\infty \frac{\lambda}{\zeta + \lambda} \mu_B(\zeta) \,d\zeta
	\end{align*}
	for some positive real-valued function $\mu_B$ such that the integral is absolutely convergent. We denote the set of all complete Bernstein function with $\mathcal{CB}$ and define
	\begin{align*}
		\mathcal{SB} := \mathcal{LS} \,\cup\,\mathcal{CS}\,\cup\,\mathcal{CB} = \mathcal{LS} \,\cup\,\mathcal{CB}.
	\end{align*}
\end{definition}
For some $f\in\mathcal{SB}$ the corresponding density function $\mu(\zeta)$ can be computed explicitly. In order to apply the results provided in this paper, however, it suffices to know whether the desired function admits such a representation without the explicit knowledge of the integrand.
\begin{definition}
	An infinitely differentiable function $f:\R^+\to\R$ is said to be completely monotonic if
	\begin{align}
		\label{eq:monoton}
		(-1)^nf^{(n)}(\lambda)\geq 0, \rlap{\qquad$n\in\N_0,$}
	\end{align}
	for all $\lambda\in\R^+$. We denote the set of all completely monotonic functions with $\mathcal{CM}$.
\end{definition}
The theory of Stieltjes and complete Bernstein functions is an active field of research. Several generic strategies exist to determine whether $f$ is of the desired form, among which we present a few in the following lemma.
\begin{lemma}
	\label{lemma:properties}
	There holds
	\begin{itemize}
		\item[1.] $\mathcal{LS} = \mathcal{CM}$,
		\item[2.] $f(\lambda)\in\mathcal{CB}$ if and only if $f(\lambda) / \lambda \in\mathcal{CS}$,
		\item[3.] if $f\in\mathcal{CM}$ and $g\in\mathcal{CB}$, then $f\circ g\in\mathcal{CM}$,
		\item[4.] if $f\equiv c>0$, then $f\in\mathcal{LS}$.
	\end{itemize}
\end{lemma}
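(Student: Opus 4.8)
The plan is to dispatch the four assertions in an order that exploits their interdependence. Throughout I read the Stieltjes integrals in the sense of a positive Borel (Stieltjes) measure, so that atoms such as a Dirac mass at the origin are admissible; this is the interpretation under which all four identities can hold simultaneously (a density in the literal sense cannot reproduce a constant).

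Statement~2 is purely formal and I would do it first. Given $f\in\mathcal{CB}$ with density $\mu_B$, dividing the defining integral by $\lambda>0$ yields $f(\lambda)/\lambda=\int_0^\infty(\zeta+\lambda)^{-1}\mu_B(\zeta)\,d\zeta$, which is exactly the Cauchy--Stieltjes form with $\mu_C=\mu_B$; conversely, multiplying a Cauchy--Stieltjes representation of $f/\lambda$ by $\lambda$ recovers the complete Bernstein form. Absolute convergence transfers in both directions because $\lambda$ enters only as a fixed positive factor, so there is no obstacle here.

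For statement~1, the inclusion $\mathcal{LS}\subseteq\mathcal{CM}$ is routine: differentiating \eqref{eq:LaplaceStieljesInt} $n$ times under the integral sign, which is justified by absolute convergence together with the decay of $\zeta^n e^{-\zeta\lambda}$ uniformly on compact $\lambda$-intervals bounded away from $0$, gives $(-1)^n f^{(n)}(\lambda)=\int_0^\infty\zeta^n e^{-\zeta\lambda}\mu_L(\zeta)\,d\zeta\ge 0$, i.e. \eqref{eq:monoton}. The reverse inclusion $\mathcal{CM}\subseteq\mathcal{LS}$ is the substantial direction and is precisely the Hausdorff--Bernstein--Widder theorem, namely that every completely monotone function is the Laplace transform of a positive measure; I would cite this rather than reprove it, as it is the analytic heart of the statement. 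Statement~4 then follows immediately: a positive constant has all derivatives of order $\ge 1$ equal to zero and a positive value, hence lies in $\mathcal{CM}=\mathcal{LS}$ with representing measure $c\,\delta_0$, which is exactly the point at which the measure interpretation is indispensable.

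Statement~3 carries the real content. I would first record that every $g\in\mathcal{CB}$ is a Bernstein function, meaning $g\ge 0$ and $g'\in\mathcal{CM}$: positivity is clear from the integrand, and differentiating the complete Bernstein representation gives $g'(\lambda)=\int_0^\infty\zeta(\zeta+\lambda)^{-2}\mu_B(\zeta)\,d\zeta$, which is completely monotone because $\lambda\mapsto(\zeta+\lambda)^{-2}$ is. Consequently $g^{(i)}$ carries the sign $(-1)^{i-1}$ for every $i\ge 1$. Combining this with $(-1)^k f^{(k)}\ge 0$ (from $f\in\mathcal{CM}$) and Faà di Bruno's formula, each summand of $(f\circ g)^{(n)}$ has the shape $c\, f^{(k)}(g)\prod_{i}\bigl(g^{(i)}\bigr)^{m_i}$ with positive coefficient $c$, $\sum_i i\,m_i=n$, and $\sum_i m_i=k$; its sign is $(-1)^k(-1)^{\sum_i(i-1)m_i}=(-1)^{k+(n-k)}=(-1)^n$, so that, no cancellation being possible, $(-1)^n(f\circ g)^{(n)}\ge 0$ and $f\circ g\in\mathcal{CM}$. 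Equivalently, one may invoke the standard structural fact that $\lambda\mapsto e^{-\zeta g(\lambda)}$ is completely monotone whenever $g$ is Bernstein and, using statement~1 to write $f(\mu)=\int_0^\infty e^{-\zeta\mu}\mu_L(\zeta)\,d\zeta$, conclude $f\circ g=\int_0^\infty e^{-\zeta g}\mu_L\,d\zeta\in\mathcal{CM}$ by closure of $\mathcal{CM}$ under positive mixtures. The main obstacle is exactly this sign bookkeeping (equivalently, the composition closure of complete monotonicity under Bernstein maps); the remaining manipulations of the integral representations are direct.
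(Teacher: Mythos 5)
Your proposal is correct, and it is more self-contained than the paper's own proof, which disposes of the lemma almost entirely by citation: claim~1 is attributed to Bernstein's theorem, claim~2 is declared to follow by definition, claim~3 is referred to \cite{Schilling2009}, and claim~4 is deduced from $\mathcal{CM}=\mathcal{LS}$ exactly as you do. Your treatments of claims~1, 2 and~4 therefore match the paper (you additionally spell out the easy inclusion $\mathcal{LS}\subseteq\mathcal{CM}$ by differentiation under the integral, which the paper leaves implicit in the citation). The genuine difference is claim~3, where you supply the standard Fa\`a di Bruno sign-counting argument --- the exponent bookkeeping $(-1)^k(-1)^{\sum_i(i-1)m_i}=(-1)^n$ is right, and the alternative route via complete monotonicity of $e^{-\zeta g}$ and closure of $\mathcal{CM}$ under positive mixtures is the proof one finds in \cite{Schilling2009} --- whereas the paper simply points to that reference. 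Your opening remark about interpreting the Stieltjes integrals with respect to positive measures (so that a Dirac mass at the origin is admissible) is also a legitimate observation: with the paper's literal ``positive real-valued function'' $\mu_L$, a nonzero constant has no Laplace--Stieltjes representation, so claim~4 and the identity $\mathcal{LS}=\mathcal{CM}$ only hold under the measure reading that the paper tacitly adopts. The only cosmetic caveat is in claim~3: one should either exclude $g\equiv 0$ or note that a nonzero complete Bernstein function is strictly positive on $\R^+$, so that $f\circ g$ is well defined; this does not affect any application in the paper.
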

\begin{proof}
	The first claim is a well-known result of Bernstein \cite{Bernstein1929}, the second follows by definition, the third can be found in \cite{Schilling2009}. If $f\equiv c>0$, then $f$ satisfies \eqref{eq:monoton}, i.e., $f\in\mathcal{CM} = \mathcal{LS}$.
\end{proof}

In light of these results, we are now in position to affirm that the functions we are interested in satisfy integral representations of the aforementioned forms. For convenience, we define
\begin{align}
\label{eq:mlextended}
	\ml(t,\lambda) := 
	\begin{cases}
		E_{\alpha,\beta}(t\lambda), \quad&\text{if }\alpha > 0, \\
		(1+\lambda)^{-1}, \quad&\text{if }\alpha = 0.
	\end{cases}
\end{align} 
\begin{lemma}
	\label{lemma:fracstieltjes}
	$\,$
	\begin{itemize}
		\item[1.] For all $s\in(0,1)$ there holds $f(\lambda) = \lambda^{s}\in\mathcal{CB}$.
		\item[2.] For all $s\in[0,1]$ there holds $f(\lambda) = \lambda^{-s}\in\mathcal{LS}$. If $s\in(0,1)$, then $f\in\mathcal{CS}$.
		\item[3.] For all $\alpha\in[0,1]$, $\beta\geq\alpha$, $t\in\R_0^+$, and $s\in[0,1]$ there holds $f(\lambda) = \ml(-t^\alpha, \lambda^s)\in\mathcal{LS}$. If $t>0$ and $s>0$ with $s+\frac{\alpha}{2}<1$, then $f\in\mathcal{CS}$.
	\end{itemize}
\end{lemma}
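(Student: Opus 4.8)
The plan is to treat the three items in increasing order of difficulty, reusing Lemma~\ref{lemma:properties} throughout and deferring the genuinely hard analysis to the Cauchy--Stieltjes claim in item~3. For item~2 I would start from the classical Cauchy--Stieltjes representation of the fractional power: for $s\in(0,1)$ the substitution $\zeta=\lambda u$ reduces a Beta-integral to
\begin{equation*}
	\lambda^{-s}=\frac{\sin(\pi s)}{\pi}\int_0^\infty\frac{\zeta^{-s}}{\zeta+\lambda}\,d\zeta,
\end{equation*}
whose density $\mu_C(\zeta)=\tfrac{\sin(\pi s)}{\pi}\zeta^{-s}$ is positive and renders the integral absolutely convergent, so $\lambda^{-s}\in\mathcal{CS}$. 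For the endpoints, $\lambda^{0}\equiv1\in\mathcal{LS}$ by Lemma~\ref{lemma:properties}.4 and $\lambda^{-1}=\int_0^\infty e^{-\zeta\lambda}\,d\zeta\in\mathcal{LS}$ directly; together with $\mathcal{CS}\subset\mathcal{LS}$ this yields $\lambda^{-s}\in\mathcal{LS}$ for all $s\in[0,1]$. (Equivalently, $(-1)^n\frac{d^n}{d\lambda^n}\lambda^{-s}=s(s+1)\cdots(s+n-1)\lambda^{-s-n}\ge0$ exhibits $\lambda^{-s}\in\mathcal{CM}=\mathcal{LS}$ in one stroke.) Item~1 then follows immediately: by Lemma~\ref{lemma:properties}.2 one has $\lambda^{s}\in\mathcal{CB}$ iff $\lambda^{s}/\lambda=\lambda^{-(1-s)}\in\mathcal{CS}$, and since $1-s\in(0,1)$ this is exactly the representation just established, i.e.\ $\lambda^{s}=\tfrac{\sin(\pi s)}{\pi}\int_0^\infty\frac{\lambda}{\zeta+\lambda}\,\zeta^{s-1}\,d\zeta$.

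For the Laplace--Stieltjes part of item~3 I would exploit $\mathcal{LS}=\mathcal{CM}$ and build $f$ as a composition. The key external ingredient is the classical fact \cite{Schneider1996} that, for $0<\alpha\le1$ and $\beta\ge\alpha$, the map $x\mapsto E_{\alpha,\beta}(-x)$ is completely monotonic on $\R_0^+$; scaling the argument by $t^\alpha\ge0$ preserves this, so $g(x):=E_{\alpha,\beta}(-t^\alpha x)\in\mathcal{CM}$. Writing $f(\lambda)=g(\lambda^s)=g\circ h$ with inner function $h(\lambda)=\lambda^s\in\mathcal{CB}$ (item~1), Lemma~\ref{lemma:properties}.3 gives $f\in\mathcal{CM}=\mathcal{LS}$ whenever $s\in(0,1)$, $t>0$, $\alpha\in(0,1]$. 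The degenerate configurations are handled by hand: $s\in\{0,1\}$ or $t=0$ reduce $f$ to a positive constant or to $g$ composed with the trivial maps $\lambda$ and $1$, for which complete monotonicity is immediate, and for $\alpha=0$ one has $f(\lambda)=(1+\lambda^s)^{-1}=\tilde g(\lambda^s)$ with $\tilde g(x)=(1+x)^{-1}\in\mathcal{CM}$, so the same composition argument applies.

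The real work sits in the Cauchy--Stieltjes claim of item~3, and this is where I expect the main obstacle. I would invoke the standard analytic characterization of Stieltjes functions (see \cite{Schilling2009}): a function that is positive on $\R^+$, extends analytically to the cut plane $\C\setminus(-\infty,0]$, decays to $0$ at infinity, stays bounded at the origin, and maps the upper half-plane into the closed lower half-plane, belongs to $\mathcal{CS}$, its density being recovered from the jump across the cut via $\mu_C(r)=-\tfrac1\pi\operatorname{Im} f(-r+i0^+)$. Positivity of $f$ on $\R^+$ and boundedness at $0$ (where $f(0)=1/\Gamma(\beta)$) follow from the Laplace--Stieltjes part. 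Since $E_{\alpha,\beta}$ is entire and $\lambda\mapsto\lambda^s$ is analytic on the cut plane, $f(\lambda)=\ml(-t^\alpha,\lambda^s)$ admits the required extension, and the branch cut is created entirely by $\lambda^s$. The decisive computation is the argument of $w:=-t^\alpha\lambda^s$: for $\arg\lambda=\phi\in(-\pi,\pi)$ one finds $|\arg w|=\pi-s|\phi|\ge(1-s)\pi$, and the hypothesis $s+\tfrac\alpha2<1$ is precisely $(1-s)\pi>\tfrac{\alpha\pi}{2}$. Hence $w$ remains, for every $\lambda$ in the cut plane, strictly inside the sector $|\arg w|>\tfrac{\alpha\pi}{2}$ on which $E_{\alpha,\beta}$ decays algebraically, $E_{\alpha,\beta}(w)\sim -w^{-1}/\Gamma(\beta-\alpha)$; this delivers the decay $f(\lambda)=O(\lambda^{-s})$ needed for the Stieltjes inversion to reproduce $f$ with no missing polynomial part.

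The genuine obstacle is the sign condition, i.e.\ $\mu_C(r)\ge0$, which amounts to showing $\operatorname{Im} E_{\alpha,\beta}\!\big(t^\alpha r^{s}e^{i(s-1)\pi}\big)\le0$ for all $r>0$, and more generally $\operatorname{Im} E_{\alpha,\beta}(w)\le0$ whenever $w$ lies in the lower half-plane inside the sector above. I would derive this from the sectorial mapping properties of the Mittag--Leffler function: either from its Hankel-contour integral representation, which expresses $-\tfrac1\pi\operatorname{Im} E_{\alpha,\beta}$ along the cut as a manifestly nonnegative spectral density once $w$ avoids the exponential-growth sector $|\arg w|\le\tfrac{\alpha\pi}{2}$, or by tracking that $\arg E_{\alpha,\beta}(w)$ carries the opposite sign of $\arg w$ on that sector. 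As a consistency check for the boundary case $\alpha=0$, the computation is elementary: $f(\lambda)=(1+\lambda^s)^{-1}$ has $\operatorname{Im} f(\lambda)=-\,r^s\sin(s\phi)/|1+\lambda^s|^2<0$ on $\C^+$ for $s\in(0,1)$, which is exactly the regime $s+\tfrac\alpha2<1$. The crux of the whole lemma is thus the $0<\alpha<1$ sign analysis of the Mittag--Leffler function on the admissible sector.
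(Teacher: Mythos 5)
Your treatment of items 1 and 2 and of the Laplace--Stieltjes half of item 3 is correct and essentially coincides with the paper's proof: the explicit integral $\lambda^{-s}=\tfrac{\sin(\pi s)}{\pi}\int_0^\infty\tfrac{\zeta^{-s}}{\zeta+\lambda}\,d\zeta$ is precisely Balakrishnan's formula that the paper invokes, your derivation of item 1 from Lemma \ref{lemma:properties}.2 plus item 2 is the intended route (the paper simply cites \cite{Schilling2009}), and the composition argument $E_{\alpha,\beta}(-t^\alpha\,\cdot)\in\mathcal{CM}$ composed with $\lambda^s\in\mathcal{CB}$, with the degenerate cases $s\in\{0,1\}$, $t=0$, $\alpha=0$ handled separately, is exactly what the paper does.

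The divergence, and the one genuine gap, is the Cauchy--Stieltjes claim in item 3. The paper disposes of it by citing \cite[Proposition 4.5]{Turner2011} and \cite[Example 4.3]{Novati2017} and remarking that the cited proof extends from $s\in(\tfrac12,1]$ to $s\in(0,\tfrac12]$. You instead attempt a self-contained proof via the Nevanlinna/Stieltjes-inversion characterization, and your sector computation $|\arg(-t^\alpha\lambda^s)|\ge(1-s)\pi>\tfrac{\alpha\pi}{2}$ is correct and genuinely illuminating: it explains where the hypothesis $s+\tfrac{\alpha}{2}<1$ comes from, which the paper's citation-based proof does not. However, you explicitly identify the decisive step --- that $\operatorname{Im}E_{\alpha,\beta}(w)\le0$ for $w$ in the lower half-plane within the sector $|\arg w|>\tfrac{\alpha\pi}{2}$, so that the inversion density $\mu_C(r)=-\tfrac1\pi\operatorname{Im}f(-r+i0^+)$ is nonnegative --- and then do not prove it: you offer two candidate strategies (the Hankel-contour representation, or sign-tracking of $\arg E_{\alpha,\beta}$) without carrying either out. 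Since this sign condition is exactly the content of the Turner/Novati results, your argument as written does not yet establish the claim; it reduces it to an unproven mapping property of the Mittag--Leffler function. Completing it would require either executing the Hankel-contour computation (which is essentially how \cite{Turner2011} proceeds) or falling back on the citation, as the paper does. Your $\alpha=0$ consistency check is correct but does not substitute for the $0<\alpha\le1$ case. A secondary, minor point: the half-plane characterization a priori yields a representation with a possible additive constant, a $b/\lambda$ term, and a general positive measure; you address the first two via the decay and boundedness estimates, but should also note that the boundary values are continuous away from the origin so that the measure is absolutely continuous with an integrable density, as the paper's definition of $\mathcal{CS}$ requires.
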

\begin{proof}
	The first claim is a classical result and can be found in \cite{Schilling2009}. The second claim is a direct consequence of Balakrishnan's formula \cite{Balakrishnan1960}, $\lambda^0 = 1 >0$, $\lambda^{-1}\in\mathcal{CM}$, and Lemma \ref{lemma:properties}.
	
	To show the third claim, we start with $\alpha = 0$ and note that $(1+\lambda^0)^{-1} = 1/2\in\mathcal{CM}$ and $(1+\lambda)^{-1} \in\mathcal{CM}$. Again, due to Lemma \ref{lemma:properties}, $\mathrm{e}_{0,\beta}(-t^0,\lambda^s) \in\mathcal{LS}$ if $s\in\{0,1\}$. Moreover, arguing as in \cite[Proposition 4.3]{Turner2011}, one verifies $\mathrm{e}_{0,\beta}(-t^0,\lambda^s)\in\mathcal{CS}\subset\mathcal{LS}$ if $s\in(0,1)$. 
	
	Let now $\alpha > 0$. If $t = 0$, we have $f \equiv 1 \in\mathcal{CM}$ and the claim is valid for all $(\alpha,\beta,s)\in(0,1]\times[\alpha,\infty)\times[0,1]$. Let therefore $t>0$; w.l.o.g. we assume $t = 1$. As shown in \cite{Schneider1996, Miller1999}, $E_{\alpha,\beta}(-\lambda)\in\mathcal{CM}$ if and only if $\alpha\in(0,1]$ and $\beta\geq\alpha$. Since $E_{\alpha,\beta}(-\lambda^0)\equiv 1\in\mathcal{CM}$, the claim holds if $s\in\{0,1\}$. Due to $\lambda^s\in\mathcal{CB}$ for all $s\in(0,1)$, we deduce from the third property in Lemma \ref{lemma:properties} that $E_{\alpha,\beta}(-\lambda^s)\in\mathcal{CM}$. Thus, Lemma \ref{lemma:properties} can be consulted to affirm that $\ml(-1^\alpha,\lambda^s) = E_{\alpha,\beta}(-\lambda^s)\in\mathcal{LS}$.
	
	As shown in \cite[Proposition 4.5]{Turner2011} and \cite[Example 4.3]{Novati2017}, $E_{\alpha,\beta}(-\lambda^s)\in\mathcal{CS}$ if $s+\frac{\alpha}{2}<1$ and $s\in(\frac{1}{2},1]$. It is easily verified, however, that the proof remains valid if $s\in(0,\frac{1}{2}]$. 
\end{proof}

Lemma \ref{lemma:fracstieltjes} justifies to consider fractional diffusion problems in the abstract framework of Stieltjes and complete Bernstein functions. The results we provide in the sequel, however, apply to any function $f\in\mathcal{SB}$, such as
\begin{align*}
	\frac{\ln(1+\lambda)}{\lambda} \in\mathcal{CS}, \qquad \frac{1-e^{-\lambda}}{\lambda}\in\mathcal{LS}\setminus\mathcal{CS},\qquad \sinh^{-1}(\lambda)\in\mathcal{CB}.
\end{align*}
Nevertheless, we particularly focus on the fractional framework, where the proposed approximation schemes, due to their flexibility in the parameters involved, unfold their full potential.

\section{Rational Krylov methods for Stieltjes and complete Bernstein functions}
\label{sec:RKM}
In view of Lemma \ref{lemma:fracstieltjes}, we use a rational Krylov method as a starting point to approximate matrix-vector products of the form $f(L)\mathbf{b}$, where $f\in\mathcal{SB}$ is a function of Stieltjes or complete Bernstein type. A brief introduction to RKMs is outlined in the following.

\subsection{The rational Krylov method}
Throughout what follows, let $L\in\R^{N\times N}$ label a diagonalizable positive definite matrix whose smallest and largest eigenvalues are given by $\lambda_{\min}$ and $\lambda_{\max}$, and $\Sigma := [\lambda_{\min},\lambda_{\max}]$ its spectral interval. Based on a selection of poles $\Xi = \{\xi_1,\dots,\xi_k\}\subset\mathbb{C}\setminus\Sigma$ we introduce the polynomial
\begin{equation*}
	q_{k}(\lambda) := \prod_{j=1}^k (\lambda - \xi_j) \in \mathcal P_{k},
\end{equation*}
where $\mathcal P_{k}$ denotes the algebraic polynomials of degree at most
$k$. We define the \emph{rational Krylov space} of $L$ and $\mathbf{b}$ associated to $\Xi$ by
\begin{align*}
	\mathcal Q^{\Xi}_{k+1}(L, \mathbf b) :=	q_k(L)^{-1} \mathcal K_{k+1}(L, \mathbf b) := q_k(L)^{-1}\operatorname{span} \{ \mathbf b, L \mathbf b, \ldots, L^k \mathbf b \}.
\end{align*}
If the poles are pairwise distinct, then $\mathcal Q^\Xi_{k+1}(L, \mathbf b) = \operatorname{span}\{\mathbf{b}, (I-\xi_1L)^{-1}\mathbf{b},\dots,(I-\xi_kL)^{-1}\mathbf{b}\}$ so that the computation of the rational Krylov space can be performed efficiently in parallel. The extraction of a proper surrogate $\mathbf{u}_{k+1}\approx f(L)\mathbf{b}$ relies on a matrix $V\in\C^{N\times(k+1)}$ whose columns form a basis of $\mathcal Q_{k+1}^{\Xi}(L, \mathbf b)$. The construction of such matrices typically involves Gram-Schmidt-type orthonormalization algorithms and can be found in the literature \cite{Ruhe1984,Guettel:PhD}. The desired approximation of the matrix-vector product is obtained via Rayleigh-Ritz extraction and reads
\begin{align}
	\label{eq:rkm}
	\mathbf u_{k+1} := V f(L_{k+1}) V^\dagger \mathbf b \in\mathcal{Q}^{\Xi}_{k+1}(L,\mathbf{b}),
	\qquad
	L_{k+1} := V^\dagger L V \in \mathbb \C^{(k+1)\times(k+1)},
\end{align}
where $V^\dagger\in\C^{(k+1)\times N}$ is the Moore-Penrose inverse of $V$ \cite{Geville2003}. Typically $k\ll N$ such that $f(L_{k+1})$ can be computed directly by diagonalization.

\begin{remark}
	In the finite element setting presented in Section \ref{sec:fracdiff}, we have $L = M^{-1}A$ with $M$ and $A$ as in \eqref{eq:femmatrices}. It this context, it is computationally convenient to choose $V$ as $M$-orthonormal basis of $\mathcal{Q}_{k+1}^\Xi(L,\mathbf{b})$ such that $V^\dagger = V^*M$ and thus $L_{k+1} = V^*AV$, where $V^*$ labels the adjoint of $V$. 
\end{remark}

The eigenvalues of $L_{k+1}$ are called \textit{rational Ritz values} of $L$ on $\mathcal{Q}^\Xi_{k+1}(L,\mathbf{b})$ and are independent of the particular basis $V$ \cite{Guettel:PhD}. They are contained in $\Sigma$ and play an essential role in Rayleigh-Ritz approximations. Provided a set of pairwise distinct nodes $\Lambda = \{\sigma_1,\dots,\sigma_{n+1}\}\subset\C$, we introduce the rational interpolant
\begin{align}
	\label{eq:ratint}
	r_{\Lambda,\Xi}^f \in\mathcal{R}_{n,k} := \{r = p_1 /p_2 : p_{1}\in\mathcal{P}_{n}, p_2\in\mathcal{P}_k\setminus\{0\} \}
\end{align}
as the unique rational function of degree $(n,k)$ with denominator $q_k$ that interpolates $f$ in $\Lambda$. The following two results establishes a close relation between the rational Ritz values and functions of type \eqref{eq:ratint}, which is instrumental in the analysis of RKMs. For a proof we refer to \cite[Lemma 4.6, Theorem 4.8]{Guettel:PhD}.
\begin{lemma}
	\label{lemma:exactness}
	Let $V$ be a basis of $\mathcal{Q}_{k+1}^{\Xi}(L,\mathbf{b})$, $L_{k+1} = V^\dagger LV$, and $r_k = p_k / q_k$ for some polynomial $p_k\in\mathcal{P}_k$. Then the rational Krylov approximation of $r_k(L)\mathbf{b}$ is exact, i.e.,
	\begin{align*}
	r_k(L) \mathbf{b} = Vr_k(L_{k+1})V^\dagger\mathbf{b}.
	\end{align*}
\end{lemma}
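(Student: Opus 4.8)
The plan is to establish that the rational Krylov approximation reproduces any rational function of the form $r_k = p_k/q_k$ (with $p_k \in \mathcal{P}_k$ and denominator exactly the pole polynomial $q_k$) exactly. The key structural observation is that such functions $r_k(L)\mathbf{b}$ already lie in the rational Krylov space $\mathcal{Q}^\Xi_{k+1}(L,\mathbf{b})$. Indeed, writing $r_k(L)\mathbf{b} = q_k(L)^{-1}p_k(L)\mathbf{b}$, and noting that $p_k(L)\mathbf{b} \in \mathcal{K}_{k+1}(L,\mathbf{b}) = \operatorname{span}\{\mathbf{b}, L\mathbf{b},\dots,L^k\mathbf{b}\}$ since $p_k$ has degree at most $k$, we conclude $r_k(L)\mathbf{b} = q_k(L)^{-1}p_k(L)\mathbf{b} \in q_k(L)^{-1}\mathcal{K}_{k+1}(L,\mathbf{b}) = \mathcal{Q}^\Xi_{k+1}(L,\mathbf{b})$.

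First I would invoke the fact that $V$ is a basis of the Krylov space, so that $VV^\dagger$ acts as the identity on $\mathcal{Q}^\Xi_{k+1}(L,\mathbf{b})$; concretely, for any $\mathbf{w}$ in the space one has $VV^\dagger \mathbf{w} = \mathbf{w}$. Since $r_k(L)\mathbf{b}$ belongs to the space by the previous paragraph, it therefore suffices to show that $Vr_k(L_{k+1})V^\dagger\mathbf{b} = VV^\dagger r_k(L)\mathbf{b}$, i.e., that the compressed evaluation agrees with the oblique projection of the true value. Next I would exploit the \emph{spectral compatibility} of the compression: the central algebraic lemma is that for any polynomial $p$ of degree at most $k$, the compressed operator satisfies $p(L_{k+1})V^\dagger\mathbf{b} = V^\dagger p(L)\mathbf{b}$. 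This follows because $L$ maps the Krylov space into itself up to the highest power, and the Rayleigh--Ritz projection commutes with polynomial action as long as one does not exceed the degree that exits the space. I would verify this by induction on the degree of $p$, with the base case $p\equiv 1$ being immediate and the inductive step using $LV\mathbf{x} \in \mathcal{Q}^\Xi_{k+1}$ together with $VV^\dagger L V = L_{k+1} V$ on the relevant subspace.

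With these pieces in hand, the computation is direct: applying the polynomial exactness to $p_k$ gives $p_k(L_{k+1})V^\dagger\mathbf{b} = V^\dagger p_k(L)\mathbf{b}$, and the analogous statement for the \emph{resolvent-type} denominator $q_k(L)^{-1}$ requires showing $q_k(L_{k+1})^{-1}V^\dagger\mathbf{b} = V^\dagger q_k(L)^{-1}\mathbf{b}$, which one obtains by applying the polynomial identity to $q_k$ itself and then inverting, using that the poles $\xi_j \notin \Sigma$ guarantee $q_k(L_{k+1})$ is invertible (its eigenvalues are the rational Ritz values in $\Sigma$). Combining yields $r_k(L_{k+1})V^\dagger\mathbf{b} = V^\dagger r_k(L)\mathbf{b}$, whence $Vr_k(L_{k+1})V^\dagger\mathbf{b} = VV^\dagger r_k(L)\mathbf{b} = r_k(L)\mathbf{b}$.

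\textbf{The main obstacle} I anticipate is justifying the polynomial exactness relation $p(L_{k+1})V^\dagger\mathbf{b} = V^\dagger p(L)\mathbf{b}$ cleanly at the boundary degree $k$, since the rational Krylov space is built from $q_k(L)^{-1}$ applied to the standard Krylov space, and one must ensure that multiplication by $L$ does not push vectors out of $\mathcal{Q}^\Xi_{k+1}$ prematurely. The correct bookkeeping is that $L\,\mathcal{Q}^\Xi_{k+1} \subset \mathcal{Q}^\Xi_{k+1} + \operatorname{span}\{L^{k+1}q_k(L)^{-1}\mathbf{b}\}$, and the degree constraints on $p_k$ and $q_k$ are precisely tuned so that $r_k(L)\mathbf{b}$ never invokes this extra direction. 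Since this is a standard and established result, I would in practice simply cite \cite[Lemma 4.6, Theorem 4.8]{Guettel:PhD} for the exactness and focus the argument on the clean membership observation $r_k(L)\mathbf{b}\in\mathcal{Q}^\Xi_{k+1}(L,\mathbf{b})$ together with the projection identity.
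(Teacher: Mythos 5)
The paper gives no proof of its own here---it cites \cite[Lemma 4.6, Theorem 4.8]{Guettel:PhD}---so your closing fallback of citing that reference is exactly what the authors do. Your attempted self-contained argument, however, has a genuine flaw in its central step. The claimed ``polynomial exactness'' $p(L_{k+1})V^\dagger\mathbf{b} = V^\dagger p(L)\mathbf{b}$ for all $p\in\mathcal{P}_k$ is the exactness property of the \emph{polynomial} Krylov space $\mathcal{K}_{k+1}(L,\mathbf{b})$; it fails for the rational space. The induction you sketch needs $L^{j}\mathbf{b}\in\mathcal{Q}^{\Xi}_{k+1}(L,\mathbf{b})$ for $j\le \deg p-1$, but already $L\mathbf{b}=q_k(L)^{-1}\bigl(Lq_k(L)\mathbf{b}\bigr)$ involves a degree-$(k+1)$ polynomial applied to $\mathbf{b}$ and therefore generically lies outside $\mathcal{Q}^{\Xi}_{k+1}(L,\mathbf{b})$. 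Concretely, for $k\ge 2$ and $p(\lambda)=\lambda^2$ one gets $L_{k+1}^2V^\dagger\mathbf{b}=V^\dagger LVV^\dagger L\mathbf{b}\neq V^\dagger L^2\mathbf{b}$ in general. Consequently the identity you propose to ``invert,'' $q_k(L_{k+1})V^\dagger\mathbf{b}=V^\dagger q_k(L)\mathbf{b}$, is also false in general, and even if it held, inverting it would yield $V^\dagger\mathbf{b}=q_k(L_{k+1})^{-1}V^\dagger q_k(L)\mathbf{b}$, not the relation $q_k(L_{k+1})^{-1}V^\dagger\mathbf{b}=V^\dagger q_k(L)^{-1}\mathbf{b}$ that you need; likewise, combining the two separate identities for $p_k$ and $q_k^{-1}$ does not produce the identity for their product.

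The repair is to run the entire commutation argument on the shifted vector $\mathbf{w}:=q_k(L)^{-1}\mathbf{b}$, for which $L^{j}\mathbf{w}\in\mathcal{Q}^{\Xi}_{k+1}(L,\mathbf{b})$ for all $j=0,\dots,k$ \emph{by definition} of the space; this is precisely the bookkeeping you allude to in your ``main obstacle'' paragraph, but it rescues the lemma only for $\mathbf{w}$, not for $\mathbf{b}$. The induction then gives $p(L_{k+1})V^\dagger\mathbf{w}=V^\dagger p(L)\mathbf{w}$ for all $p\in\mathcal{P}_k$. Taking $p=q_k$ yields $q_k(L_{k+1})V^\dagger\mathbf{w}=V^\dagger\mathbf{b}$, hence $V^\dagger\mathbf{w}=q_k(L_{k+1})^{-1}V^\dagger\mathbf{b}$ (invertibility holds since the Ritz values lie in $\Sigma$ while the poles do not); taking $p=p_k$ then gives $r_k(L_{k+1})V^\dagger\mathbf{b}=p_k(L_{k+1})V^\dagger\mathbf{w}=V^\dagger p_k(L)\mathbf{w}=V^\dagger r_k(L)\mathbf{b}$. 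Your two correct observations---that $r_k(L)\mathbf{b}\in\mathcal{Q}^{\Xi}_{k+1}(L,\mathbf{b})$ because $p_k(L)\mathbf{b}\in\mathcal{K}_{k+1}(L,\mathbf{b})$, and that $VV^\dagger$ acts as the identity on the space---then finish the proof. So the skeleton is right, but the vector on which compression commutes with polynomials must be $q_k(L)^{-1}\mathbf{b}$, not $\mathbf{b}$.
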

\begin{theorem}
	\label{thm:krylov_interpol}
	Let $V$ be a basis of $\mathcal{Q}_{k+1}^{\Xi}(L,\mathbf{b})$, $L_{k+1} = V^\dagger LV$, and $\mathbf{u}_{k+1} = Vf(L_{k+1})V^\dagger\mathbf{b}$. Then there holds
	\[
		\mathbf u_{k+1} = r(L) \mathbf b,
	\]
	where $r = p_k/q_k$, $p_k\in\mathcal{P}_k$, interpolates $f$ in the rational Ritz values $\Lambda = \{\mu_1,\dots,\mu_{k+1}\}$ of $L$ on $\mathcal{Q}_{k+1}^{\Xi}(L,\mathbf{b})$. If $(\mu)_{j=1}^{k+1}$ are pairwise distinct, then $r = r^f_{\Lambda,\Xi}$ as in \eqref{eq:ratint}.
\end{theorem}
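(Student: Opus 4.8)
The plan is to reduce the statement to Lemma \ref{lemma:exactness} through the defining property of matrix functions, namely that $f(L_{k+1})$ depends only on the values of $f$ (and, in the confluent case, on finitely many derivatives) on the spectrum of $L_{k+1}$. First I would construct the interpolant $r$. Since the denominator $q_k$ is prescribed, requiring $r = p_k/q_k$ to interpolate $f$ at the Ritz values $\Lambda = \{\mu_1,\dots,\mu_{k+1}\}$ amounts to solving $p_k(\mu_j) = f(\mu_j)\,q_k(\mu_j)$ for $p_k\in\mathcal P_k$. Because the Ritz values lie in $\Sigma$ whereas the poles lie in $\C\setminus\Sigma$, we have $q_k(\mu_j)\neq 0$, so these $k+1$ conditions determine $p_k\in\mathcal P_k$ uniquely by polynomial (respectively Hermite) interpolation. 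The same observation shows that $q_k(L_{k+1})$ is invertible, since its eigenvalues are the nonzero numbers $q_k(\mu_j)$; hence $r(L_{k+1}) = p_k(L_{k+1})\,q_k(L_{k+1})^{-1}$ is well-defined.

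The central step is the identity $f(L_{k+1}) = r(L_{k+1})$. By construction, $r$ is a Hermite interpolant of $f$ at the eigenvalues of $L_{k+1}$, matching the value of $f$ at each $\mu_j$ together with as many derivatives as the size of the corresponding Jordan block requires. By the spectral characterization of matrix functions, any two functions that share this interpolation data on the spectrum of $L_{k+1}$ induce the same matrix, so $f(L_{k+1}) = r(L_{k+1})$.

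With this identity at hand, the conclusion is immediate. Since the numerator of $r$ lies in $\mathcal P_k$ and its denominator is exactly $q_k$, Lemma \ref{lemma:exactness} applies and yields $r(L)\mathbf b = V r(L_{k+1}) V^\dagger \mathbf b$. Substituting $r(L_{k+1}) = f(L_{k+1})$ gives $\mathbf u_{k+1} = V f(L_{k+1}) V^\dagger \mathbf b = V r(L_{k+1}) V^\dagger \mathbf b = r(L)\mathbf b$, as claimed. Finally, if the $\mu_j$ are pairwise distinct, the Hermite interpolant degenerates into the ordinary rational interpolant of degree $(k,k)$ with denominator $q_k$, that is $r = r^f_{\Lambda,\Xi}$ as in \eqref{eq:ratint}.

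The main obstacle I expect is the careful treatment of the confluent case in the central step: when the Ritz values coincide, or when $L_{k+1}$ fails to be diagonalizable, one must phrase both the interpolation conditions and the matrix-function identity in terms of Jordan blocks and derivative matching rather than plain pointwise values. Verifying that $r$ carries \emph{precisely} the right Hermite data on $\sigma(L_{k+1})$, and that this data alone determines $f(L_{k+1})$, is where the actual work resides; the distinct-eigenvalue case is comparatively routine.
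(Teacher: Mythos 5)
Your proof is correct and follows essentially the same route as the paper's source for this result (the paper defers to G\"uttel's thesis, whose argument is precisely this two-step reduction: $f(L_{k+1})=r(L_{k+1})$ via Hermite interpolation on the spectrum of $L_{k+1}$, followed by the exactness property of Lemma~\ref{lemma:exactness} for rational functions with denominator $q_k$). The points you flag — well-definedness of $q_k(L_{k+1})^{-1}$ because the Ritz values lie in $\Sigma$ while the poles do not, and the confluent/Jordan-block case — are exactly the right ones to check, and you handle them correctly.
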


The quality of approximation clearly depends on the rational Krylov space and the way it is extracted from it. As the following theorem shows, an extraction via \eqref{eq:rkm} yields a quasi-optimal surrogate. Its proof (see e.g., \cite[Theorem 4.10]{Guettel:PhD}) relies on Crouzeix's estimate \cite{Crouzeix2007},
\begin{align}
	\label{eq:Crouzeix}
	\norm{f(L)} \leq 2C \norm{f}_\Sigma,
\end{align}
where $C\leq 11.08$ is an absolute constant, $\norm{\cdot}$ the matrix norm induced by the Euclidean norm, which we label $\norm{\cdot}$ again, and $\norm{\cdot}_\Sigma$ the maximum norm on $\Sigma$. If $L$ is symmetric, the inequality holds with $C = 1$.

\begin{theorem}
	\label{thm:rkm_opt}
	Let $V$ be a basis of $\mathcal Q^{\Xi}_{k+1}(L,\mathbf{b})$, $L_{k+1} = V^\dagger L V$, and	$\mathbf u_{k+1} = V f(L_{k+1}) V^\dagger \mathbf b$. Then there holds
	\begin{align}
		\label{eq:quasioptimal}
		\norm{f(L) \mathbf b - \mathbf u_{k+1}} \le
		2 C \norm{\mathbf b} \min_{p \in \mathcal P_k} \norm{f - p/{q_k}}_{\Sigma}
	\end{align}
	with a constant $C \le 11.08$. If $L$ is self-adjoint, the result holds with $C=1$.
\end{theorem}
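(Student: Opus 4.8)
The plan is to establish the estimate for an \emph{arbitrary} competitor $r = p/q_k$ with $p\in\mathcal P_k$ and only afterwards pass to the minimizer over $p$; since $\{p/q_k : p\in\mathcal P_k\}$ is a finite-dimensional subspace of $C(\Sigma)$, the best approximation in $\norm{\cdot}_\Sigma$ is attained, so writing a minimum rather than an infimum is legitimate. Without loss of generality I would take the columns of $V$ to be orthonormal (the Rayleigh--Ritz surrogate $\mathbf u_{k+1}$ does not depend on the chosen basis of $\mathcal Q_{k+1}^\Xi(L,\mathbf b)$), so that $V^\dagger = V^*$, $\norm{V}=\norm{V^\dagger}=1$, and $\norm{V^\dagger\mathbf b}\leq\norm{\mathbf b}$.

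The engine of the proof is the exactness property of Lemma~\ref{lemma:exactness}. Because $r = p/q_k$ has denominator $q_k$, it is reproduced exactly by the method, $r(L)\mathbf b = V r(L_{k+1}) V^\dagger\mathbf b$. Inserting this identity into the error and regrouping gives the decomposition
\begin{align*}
	f(L)\mathbf b - \mathbf u_{k+1} = (f-r)(L)\mathbf b - V\,(f-r)(L_{k+1})\,V^\dagger\mathbf b,
\end{align*}
which reduces the task to estimating the matrix function $f-r$ evaluated at $L$ and at its compression $L_{k+1}$. Taking norms and using $\norm{V}=\norm{V^\dagger}=1$ yields
\begin{align*}
	\norm{f(L)\mathbf b - \mathbf u_{k+1}} \leq \big(\norm{(f-r)(L)} + \norm{(f-r)(L_{k+1})}\big)\,\norm{\mathbf b}.
\end{align*}

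Both summands are then controlled by Crouzeix's estimate \eqref{eq:Crouzeix}. For the first term this is immediate. For the second I would invoke that the rational Ritz values, i.e.\ the eigenvalues of $L_{k+1}$, lie in $\Sigma$, and more generally that compression by an isometry shrinks the field of values, $W(L_{k+1}) = \{y^*V^*LVy : \norm{y}=1\}\subseteq W(L)$; consequently the same $\norm{\cdot}_\Sigma$-bound applies to $(f-r)(L_{k+1})$ as to $(f-r)(L)$. Combining the two contributions and minimizing over $p$ then delivers \eqref{eq:quasioptimal}. When $L$ is self-adjoint, $L_{k+1}=V^*LV$ is self-adjoint as well, both matrices are normal with spectra contained in $\Sigma$, so each matrix-function norm equals the supremum of $|f-r|$ over a subset of $\Sigma$; this removes the Crouzeix constant and leaves $C=1$.

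The step I expect to require the most care is the treatment of the compressed term $\norm{(f-r)(L_{k+1})}$: one must verify the field-of-values (or, in the self-adjoint case, spectral) containment $W(L_{k+1})\subseteq W(L)$ so that Crouzeix may legitimately be applied on the \emph{same} interval $\Sigma$, and then account for the resulting constant carefully so that the two-term bound lands on the stated factor rather than doubling it. Everything else — the exactness identity, the triangle inequality, and the passage to the best approximation — is routine.
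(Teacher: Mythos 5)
Your proposal is correct and coincides with the argument the paper delegates to \cite[Theorem 4.10]{Guettel:PhD}: exactness of $r=p/q_k$ from Lemma~\ref{lemma:exactness}, the two-term decomposition $(f-r)(L)\mathbf b - V(f-r)(L_{k+1})V^\dagger\mathbf b$, and Crouzeix's bound applied to $L$ and to its compression $L_{k+1}$, whose numerical range (and hence Ritz values) is contained in that of $L$. The only bookkeeping point is that the factor $2$ in $2C$ arises from summing the two terms, each bounded by the Crouzeix constant $C$ alone, so one should not apply \eqref{eq:Crouzeix} with its prefactor $2C$ to each summand separately (which would yield $4C$); with that accounting, both the general case and the self-adjoint case $C=1$ come out exactly as stated.
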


In many scenarios, Theorem \ref{thm:rkm_opt} provides a powerful tool to motivate a competitive selection of poles and allows to reduce the analysis to a rational approximation problem on $\Sigma$.  If $f = \ftau$, however, the rational approximation heavily depends on the problem-specific parameters and is thus unfeasible whenever $\boldsymbol{\tau}\mapsto\ftau(L)\mathbf{b}$ is queried for several instances of $\boldsymbol{\tau}$. To address this difficulty, we strive for a selection of poles which is independent of the parameters. The key ingredient for our approach relies on the observation that the approximability of a function $f\in\mathcal{SB}$ directly relates to the approximability of its respective kernel. Due to
\begin{align*}
	f(L)\mathbf{b} = \int_0^\infty \mu(\zeta)g(\zeta,L)\mathbf{b}\, d\zeta, \qquad  Vf(L_{k+1})V^\dagger\mathbf{b} = \int_0^\infty \mu(\zeta)Vg(\zeta,L_{k+1})V^\dagger\mathbf{b}\, d\zeta,
\end{align*}
we can bound the rational Krylov error
\begin{align}
	\begin{aligned}
	\label{eq:rkmbound}
		\norm{f(L)\mathbf{b} - Vf(L_{k+1})V^\dagger\mathbf{b}} &= \left\|\int_0^\infty \mu(\zeta)\left( g(\zeta,L)\mathbf{b} - Vg(\zeta, L_{k+1})V^\dagger\mathbf{b}\right) d\zeta \right\| \\
		&\leq \int_0^\infty \mu(\zeta)\norm{ g(\zeta,L)\mathbf{b} - Vg(\zeta, L_{k+1})V^\dagger\mathbf{b}} \, d\zeta,
	\end{aligned}
\end{align}
see also \cite{Robol2020}. It is thus justified to tailor the poles towards the corresponding kernel
\begin{align}
\label{eq:kernels}
	g(\zeta,\lambda)\in\{e^{-\zeta\lambda}, (\zeta+\lambda)^{-1}, \lambda(\zeta+\lambda)^{-1}\},
\end{align}  
instead of dealing with each function individually.

\subsection{Simultaneous approximability of the kernel functions}
\label{sec:kernelapprox}
In this section, we provide the essential preparations for the description of our analytical and numerical tools presented in Section \ref{sec:rkmfracdiff}. To this end, we show that the approximability of all three kernel functions \eqref{eq:kernels} directly relates to a particular rational approximation problem on the numerical range of $L$. As a starting point, we use a distribution of poles obtained by the generalized third Zolotar\"ev problem \cite{Zolotarev1877,Gonchar1969,Popov1988,Achieser1992,Wachspress2013}: Given two disjoint sets $\Phi$, $\Psi\subset\C$, we seek a rational function $r_k^*\in\mathcal{R}_{k,k}$ that minimizes the \textit{Zolotar\"ev number}
\begin{align}
	\label{eq:Zolotarev}
	Z_k(\Phi,\Psi) := \inf_{r_k\in\mathcal{R}_{k,k}}\frac{\sup\{|r_k(z)| : z\in\Phi\}}{\inf\{|r_k(z)| : z\in\Psi\}}.
\end{align}
A connection to RKMs is well-known and can be readily established. Whenever $\Lambda= \{\sigma_1,\dots,\sigma_n\}\subset\C$ is a set of pairwise distinct nodes, $\Theta\subset\C$, and $\zeta\in\Theta$, it can be readily verified that
\begin{align}
	\label{eq:resolventdiff}
	\frac{1}{\zeta+\lambda} - r_{\Lambda,\Xi}^f(\lambda) = \frac{1}{\zeta+\lambda}\frac{r_{\Lambda,\Xi}(\lambda)}{r_{\Lambda,\Xi}(-\zeta)}, \qquad r_{\Lambda,\Xi}(\lambda) := r_{\Lambda,\Xi}^0(\lambda) =  \frac{\prod_{j=1}^{n}(\lambda-\sigma_j)}{\prod_{j=1}^{k}(\lambda-\xi_j)},
\end{align}
where $r_{\Lambda,\Xi}^f\in\mathcal{R}_{n-1,k}$ is defined by \eqref{eq:ratint} with $f(\lambda) = (\zeta + \lambda)^{-1}$; see e.g., \cite{Guettel:PhD}. Together with quasi-optimality \eqref{eq:quasioptimal}, this yields for any matrix $V$ whose columns form a basis of $\mathcal{Q}^\Xi_{k+1}(L,\mathbf{b})$ 
\begin{align}
	\label{eq:resolvent_zolobound}
	\begin{aligned}
		\norm{(\zeta I+L)^{-1}\mathbf{b} - V(\zeta I_{k+1} + L_{k+1})^{-1}V^\dagger\mathbf{b}} &\leq 2C\norm{\mathbf{b}}\norm{(\zeta+\lambda)^{-1} - r_{\Lambda,\Xi}^f(\lambda)}_{\Sigma} \\
		&\leq 2C \norm{\mathbf{b}}\norm{(\zeta+\lambda)^{-1}}_{\Sigma} \frac{\norm{r_{\Lambda,\Xi}(\lambda)}_{\Sigma}}{|r_{\Lambda,\Xi}(-\zeta)|},
	\end{aligned}
\end{align}
where $I_{k+1}\in\R^{(k+1)\times(k+1)}$ denotes the unit matrix. If $|\Lambda| = |\Xi|$, the question of optimal poles for $f(\lambda) = (\zeta+\lambda)^{-1}$ directly relates to to the third Zolotar\"ev problem with $\Phi = \Sigma$ and $\Psi = -\Theta$. Whenever $f$ can be represented via
\begin{align}
	\label{eq:markov}
	f(\lambda) = \int_\Gamma \frac{\mu(\zeta)}{\zeta+\lambda}\, d\zeta
\end{align}
for some contour $\Gamma\subset\C$ winding around the spectrum of $L$, \eqref{eq:resolventdiff} can be generalized to
\begin{align}
	\label{eq:hermitewalsh}
	f(\lambda)- r_{\Lambda,\Xi}^f(\lambda) = \int_{\Gamma}\frac{r_{\Lambda,\Xi}(\lambda)}{r_{\Lambda,\Xi}(-\zeta)}\frac{\mu(\zeta)}{\zeta+\lambda}\, d\zeta.
\end{align}
Equation \eqref{eq:hermitewalsh} is commonly known as Hermite-Walsh formula for rational interpolants \cite{Walsh1935,Guettel2012,Guettel:PhD,Guettel2013} and suggests, in view of \eqref{eq:quasioptimal}, to consider \eqref{eq:Zolotarev} for $\Phi = \Sigma$ and $\Psi = -\Gamma$.

Explicit solutions of \eqref{eq:Zolotarev} for arbitrary configurations of $(\Phi, \Psi)$ are not available. For some particular geometries, the minimizer $r_k^*$ of \eqref{eq:Zolotarev} can be derived analytically in terms of elliptic functions. If, e.g., $\Phi = [a,b]\subset\R^+$ and $\Psi = -\Phi$, there holds
\begin{align}
	\label{eq:zolorealsol}
	r_k^*(x) = \frac{p_k^*(x)}{p_k^*(-x)},\qquad p_k^*(x) = \prod_{j=1}^{k}(x-\mathcal{Z}_j^{(k)}),
\end{align}
where
\begin{align}
\label{eq:zolopnts}
	\mathcal{Z}_{j}^{(k)} := b\operatorname{dn}\left(\frac{2(k-j)+1}{2k}K(\delta'),\delta'\right), \qquad \delta' := \sqrt{1-\delta^2},\qquad \delta := \frac{a}{b},
\end{align}
for all $j = 1,\dots,k$. Here, $\operatorname{dn}$ denotes the Jacobi elliptic function and $K$ the elliptic integral of first kind; see \cite[Section 16 \& 17]{Abramowitz1964}. Zolotar\"ev's number is known to be bounded by \cite{Beckermann2017}
\begin{align*}
	Z_k(\Phi,-\Phi) \leq 4e^{-2C^*k},
\end{align*}
where
\begin{align}
	\label{eq:Cstar}
	C^* := \frac{\pi K(\mu_1)}{4K(\mu)},\qquad \mu := \left(\frac{1-\sqrt{\delta}}{1+\sqrt{\delta}}\right)^2, \qquad \mu_1 := \sqrt{1-\mu^2}.
\end{align}
For small values of $\delta$, $C^*$ behaves like
\begin{align}
	\label{eq:asymptotic}
	C^* \approx \frac{\pi^2}{2\ln(4\delta^{-1})},
\end{align}
see e.g., \cite{Beckermann2017}. Since $r_k^*(-x) = 1 / r_k^*(x)$, we further have
\begin{align*}
	Z_k(\Phi,-\Phi) = \frac{\max\{|r^*_k(z)| : z\in\Phi\}}{\min\{|r^*_k(-z)| : z\in \Phi\}} = \frac{\max\{|r_k^*(z)| : z\in\Phi\}}{\min\{1/|r_k^*(z)| : z\in \Phi\}} = \max\{|r_k^*(z)|^2 : z\in\Phi\}.
\end{align*}
As the following theorem shows, the latter even minimizes the maximal deviation from zero among all rational functions of degree $(k,k)$. The reader is referred to \cite{Zolotarev1877,Lebedev2005,Wachspress2013,Beckermann2017} for a detailed exposition.
\begin{theorem}[Zolotar\"ev]
	\label{thm:zolotarev}
	Let $\Phi = [a,b]\subset\R^+$, $C^*$ as in \eqref{eq:Cstar}, and $\mathcal{Z} := \{-\mathcal{Z}_1^{(k)},\dots, -\mathcal{Z}_k^{(k)}\}$ with $\mathcal{Z}_j^{(k)}$ as in \eqref{eq:zolopnts}. Then there holds 
	\begin{align}
		\label{eq:zolominmax}
		\norm{r_\mathcal{Z}}_\Phi = \min_{\substack{\Xi\subset-\Phi \\ |\Xi| = k}} \norm{r_\Xi}_\Phi\leq 2e^{-C^*k},
	\end{align}
	where for any $\Xi = \{\xi_1,\dots,\xi_k\}$ we define
	\begin{align}
		\label{eq:product}
		r_\Xi(\lambda) := r_{-\Xi,\Xi}(\lambda) = \prod_{j=1}^k\frac{\lambda+\xi_j}{\lambda-\xi_j}.
	\end{align}
\end{theorem}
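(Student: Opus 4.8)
The plan is to recognize this as the classical solution to the third Zolotar\"ev problem in the symmetric configuration $\Psi = -\Phi$ and to reduce the stated minimax claim to the two ingredients already assembled in the excerpt: the explicit analytic minimizer \eqref{eq:zolorealsol}--\eqref{eq:zolopnts} together with the chain of identities just above the theorem, and the exponential bound $Z_k(\Phi,-\Phi)\leq 4e^{-2C^*k}$ from \cite{Beckermann2017}. First I would observe that restricting the poles to $\Xi\subset-\Phi$ with $|\Xi|=k$ and simultaneously placing the zeros at $-\Xi$ forces the competitor $r_\Xi$ into precisely the symmetric class $\{r : r(-x)=1/r(x)\}$ for which the displayed computation preceding the theorem applies. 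For any such $r_\Xi$ we then have $\norm{r_\Xi}_\Phi^2 = \max\{|r_\Xi(z)|^2 : z\in\Phi\} = Z_k(\Phi,-\Phi \text{-type ratio})$, so that minimizing $\norm{r_\Xi}_\Phi$ over admissible pole sets is equivalent to minimizing the Zolotar\"ev ratio over the same symmetric class.

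The heart of the argument is the optimality assertion, i.e.\ that the choice $\Xi=\mathcal{Z}$ built from \eqref{eq:zolopnts} achieves the minimum and not merely a competitive value. Here I would invoke the classical fact, attributed to Zolotar\"ev and documented in \cite{Zolotarev1877,Lebedev2005,Wachspress2013}, that $r_k^*$ in \eqref{eq:zolorealsol} is the \emph{global} minimizer of the ratio \eqref{eq:Zolotarev} for $\Phi=[a,b]$, $\Psi=-\Phi$; the poles of this minimizer are exactly the points $-\mathcal{Z}_j^{(k)}$ and its zeros the reflected points $\mathcal{Z}_j^{(k)}$. Since the minimizer is automatically symmetric ($r_k^*(-x)=1/r_k^*(x)$, as recorded just before the theorem), the identity $Z_k(\Phi,-\Phi)=\max\{|r_k^*(z)|^2:z\in\Phi\}=\norm{r_\mathcal{Z}}_\Phi^2$ holds, and the same symmetry identity applied to an arbitrary admissible competitor $r_\Xi$ shows that any pole set achieving a smaller sup-norm would produce a Zolotar\"ev ratio below the minimum $Z_k(\Phi,-\Phi)$, a contradiction. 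This establishes the equality $\norm{r_\mathcal{Z}}_\Phi = \min_{\Xi}\norm{r_\Xi}_\Phi$.

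Finally, the exponential estimate follows by taking square roots: from $\norm{r_\mathcal{Z}}_\Phi^2 = Z_k(\Phi,-\Phi)\leq 4e^{-2C^*k}$ one obtains $\norm{r_\mathcal{Z}}_\Phi \leq 2e^{-C^*k}$, which is exactly \eqref{eq:zolominmax}. The main obstacle I anticipate is not computational but conceptual bookkeeping: one must be careful that the minimization in \eqref{eq:zolominmax} is taken over the restricted symmetric class (poles in $-\Phi$, zeros forced to $-\Xi$) rather than over all of $\mathcal{R}_{k,k}$, and verify that the classical Zolotar\"ev minimizer indeed lives in this restricted class so that the reduction is lossless. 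Checking that the density of the pole distribution \eqref{eq:zolopnts} matches the known elliptic-function solution, and that the square-root passage correctly converts the factor $4e^{-2C^*k}$ into $2e^{-C^*k}$, are the two places where sign conventions and the relationship between the ratio and the squared sup-norm must be tracked precisely.
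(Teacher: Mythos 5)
Your proposal is correct and follows essentially the same route as the paper, which does not give a self-contained proof but defers the minimality assertion to the classical references \cite{Zolotarev1877,Lebedev2005,Wachspress2013,Beckermann2017} and obtains the bound exactly as you do: the symmetry identity $r_k^*(-x)=1/r_k^*(x)$ displayed just before the theorem gives $Z_k(\Phi,-\Phi)=\norm{r_\mathcal{Z}}_\Phi^2$, and taking square roots of $Z_k(\Phi,-\Phi)\leq 4e^{-2C^*k}$ yields $2e^{-C^*k}$. Your attention to the restricted symmetric class (poles in $-\Phi$, zeros at the reflected points) and to the fact that the classical minimizer lies in it is exactly the bookkeeping the paper leaves implicit.
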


In several scenarios, the poles of \eqref{eq:zolorealsol} have proven themselves as excellent poles for RKMs, e.g., if $f(\lambda) = (\zeta+\lambda)^{-1}$, $\zeta\in\R^+$, is the resolvent \cite{Guettel2013,Guettel:PhD,Druskin2009}, $f(\lambda) = \exp(-t\lambda)$, $t\in\R^+$, the exponential \cite{Guettel2013,Druskin2009}, or $f(\lambda) = \lambda^{-\frac{1}{2}}$ the negative square root function \cite{Guettel2013}. A generalization of the latter for arbitrary powers contained in $(-1,1)$ can be found in \cite{DS:2019,DS:2020} and is based on the spectral interval of $L$. In its most general form, Zolotar\"ev's poles have been studied in \cite{Robol2020} in the case where $L$ is Hermitian and $f$ of Cauchy- or Laplace-Stieltjes type. In the latter case, the interval advocated therein, from which \eqref{eq:zolopnts} are sampled from, matches the spectral interval of $L$ and thus coincides with the choice proposed in \cite{DS:2020}. In the Cauchy-Stieltjes case, the authors of \cite{Robol2020} propose a more refined selection of $a$ and $b$ in \eqref{eq:zolopnts} which allows to improve the dictating constant in the exponential convergence result by a factor of two. The employment of two different pole distributions for Cauchy- and Laplace-Stieltjes functions, however, might not be fruitful in time-dependent fractional diffusion problems as it requires, in view of Lemma \ref{lemma:fracstieltjes}, to compute $\mathcal{Q}^\Xi_{k+1}(L,\mathbf{b})$ for $s+\alpha/2<1$ and $s+\alpha/2\ge 1$ separately. Instead, we prefer to sample the poles on $[-\lambda_{\max},-\lambda_{\min}]$ irrespective of the fractional parameters. We confirm that the analysis of \cite{Robol2020} directly translates to non-symmetric and positive definite matrices $L\in\R^{N\times N}$ and prove convergence in the case where $f$ is a complete Bernstein function. While we limit ourselves to real matrices only, it can be easily seen that our results immediately translate to the complex Hermitian case.

Starting with $g(\zeta,\lambda) = (\zeta + \lambda)^{-1}$, we show that for each kernel \eqref{eq:kernels} the selection of poles can be related to Zolotar\"ev's minimal deviation problem \eqref{eq:zolominmax}.
\begin{theorem}
	\label{thm:resolvent}
	Let $\zeta\in\R_0^+$, $C$ as in Theorem \ref{thm:rkm_opt}, $V$ a basis of $\mathcal{Q}^\Xi_{k+1}(L,\mathbf{b})$ with pairwise distinct poles $\Xi\subset-\Sigma$, $r_\Xi(\lambda)$ as in \eqref{eq:product}, and $L_{k+1} = V^\dagger LV$. Then there holds
	\begin{align}
		\label{eq:resolventbound}
		\norm{(\zeta I+L)^{-1}\mathbf{b} - V(\zeta I_{k+1} + L_{k+1})^{-1}V^\dagger\mathbf{b}} \leq \frac{2C}{\zeta + \lambda_{\min}}\norm{\mathbf{b}} \norm{r_\Xi}_{\Sigma} .
	\end{align}
\end{theorem}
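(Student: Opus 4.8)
The plan is to reduce the claim directly to Zolotar\"ev's minimal deviation bound \eqref{eq:zolominmax} by specializing the general resolvent estimate \eqref{eq:resolvent_zolobound} to the case where the interpolation nodes are the negatives of the poles. First I would invoke \eqref{eq:resolvent_zolobound}, which already provides, for any basis $V$ of $\mathcal{Q}^\Xi_{k+1}(L,\mathbf{b})$ and any set $\Lambda$ of pairwise distinct nodes,
\begin{align*}
	\norm{(\zeta I+L)^{-1}\mathbf{b} - V(\zeta I_{k+1} + L_{k+1})^{-1}V^\dagger\mathbf{b}} \leq 2C \norm{\mathbf{b}}\,\norm{(\zeta+\lambda)^{-1}}_{\Sigma}\, \frac{\norm{r_{\Lambda,\Xi}(\lambda)}_{\Sigma}}{|r_{\Lambda,\Xi}(-\zeta)|}.
\end{align*}
The freedom in choosing $\Lambda$ is the key lever: I would set $\Lambda = -\Xi$, i.e. take the interpolation nodes to be the reflections of the poles across the origin. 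With this choice the rational function in \eqref{eq:resolventdiff} becomes exactly $r_{\Lambda,\Xi} = r_{-\Xi,\Xi} = r_\Xi$ as defined in \eqref{eq:product}, so the quotient on the right collapses to $\norm{r_\Xi}_\Sigma / |r_\Xi(-\zeta)|$.

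The two remaining pieces are elementary estimates. For the prefactor, since $\Sigma = [\lambda_{\min},\lambda_{\max}]\subset\R^+$ and $\zeta\geq 0$, the function $\lambda\mapsto (\zeta+\lambda)^{-1}$ is monotonically decreasing on $\Sigma$, so $\norm{(\zeta+\lambda)^{-1}}_\Sigma = (\zeta+\lambda_{\min})^{-1}$. For the denominator $|r_\Xi(-\zeta)|$, I would show it is bounded below by $1$. Because the poles satisfy $\Xi\subset-\Sigma\subset\R^-$, each $\xi_j<0$, and evaluating at the nonpositive argument $-\zeta\leq 0$ gives
\begin{align*}
	|r_\Xi(-\zeta)| = \prod_{j=1}^k \left|\frac{-\zeta+\xi_j}{-\zeta-\xi_j}\right| = \prod_{j=1}^k \frac{\zeta+|\xi_j|}{\,|\,\zeta-|\xi_j|\,|\,}\cdot(\ast),
\end{align*}
and a sign analysis shows each factor $|(\zeta+\xi_j)/(\zeta-\xi_j)|\geq 1$ for $\zeta\geq 0$ and $\xi_j<0$; intuitively, $-\zeta$ lies on the same side of the origin as the poles and strictly closer to them than their reflections $-\xi_j>0$, so the numerator dominates. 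Hence $|r_\Xi(-\zeta)|\geq 1$, and dropping this factor only weakens the bound.

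Combining the three observations yields precisely
\begin{align*}
	\norm{(\zeta I+L)^{-1}\mathbf{b} - V(\zeta I_{k+1} + L_{k+1})^{-1}V^\dagger\mathbf{b}} \leq \frac{2C}{\zeta+\lambda_{\min}}\norm{\mathbf{b}}\,\norm{r_\Xi}_\Sigma,
\end{align*}
which is the assertion. I expect the only genuinely delicate point to be the lower bound $|r_\Xi(-\zeta)|\geq 1$: one must verify it uniformly over all admissible pole configurations $\Xi\subset-\Sigma$ and all $\zeta\in\R_0^+$, being careful with the sign of each factor since $-\zeta$ and the poles $\xi_j$ both lie in $\R_0^-$. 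The remaining steps are immediate consequences of \eqref{eq:resolvent_zolobound}, the monotonicity of the resolvent kernel, and the identification $r_{-\Xi,\Xi}=r_\Xi$.
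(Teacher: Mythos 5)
Your proposal is correct and follows essentially the same route as the paper: choose $\Lambda=-\Xi$ in \eqref{eq:resolvent_zolobound} so that $r_{\Lambda,\Xi}=r_\Xi$, bound $\norm{(\zeta+\lambda)^{-1}}_\Sigma$ by $(\zeta+\lambda_{\min})^{-1}$, and drop the factor $|r_\Xi(-\zeta)|^{-1}$ using $|r_\Xi(-\zeta)|\geq 1$. Your sign analysis of $|r_\Xi(-\zeta)|$ (each factor has modulus $(\zeta+|\xi_j|)/\lvert\zeta-|\xi_j|\rvert\geq 1$) is exactly the justification the paper leaves implicit.
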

\begin{proof}
	In line with \cite[Lemma 3]{DH2021}, we choose $\Lambda = -\Xi$ in \eqref{eq:resolvent_zolobound} to deduce 
	\begin{align*}
		\norm{(\zeta I+L)^{-1}\mathbf{b} - V(\zeta I_{k+1} + L_{k+1})V^\dagger\mathbf{b}} \leq \frac{2C}{\zeta + \lambda_{\min}} \norm{\mathbf{b}} \frac{\norm{r_\Xi}_{\Sigma}}{|r_\Xi(-\zeta)|}.
	\end{align*}
	The claim follows from $|r_\Xi(-\zeta)| \geq 1$ for all $\zeta\in\R_0^+$.
\end{proof}

The established interpolant in the proof of Theorem \ref{thm:resolvent} is a rational function of degree $(k-1,k)$ and thus does not exploit the full degree available in its enumerator. As a matter of fact, Theorem \ref{thm:resolvent} also holds if we replace $\mathcal{Q}^\Xi_{k+1}(L,\mathbf{b})$ by its $k$-dimensional subspace $\operatorname{span}\{(I-\xi_1L)^{-1}\mathbf{b},\dots,(I-\xi_kL)^{-1}\mathbf{b}\}$. For complete Bernstein functions, however, the presence of $\mathbf{b}$ is essential. To see this, we provide the following variant of Hermite-Walsh's formula.
\begin{lemma}
	\label{lemma:hermitewalsh2}
	Let $\zeta\in\C\setminus\{0\}$, $\Lambda = \{\sigma_1,\dots,\sigma_k\}\subset\C\setminus\{0\}$ pairwise distinct, $\widehat{\Lambda} = \Lambda\cup\{0\}$, $\Xi = \{\xi_1,\dots,\xi_k\}\subset\C$, $r_{\Lambda,\Xi}$ as in \eqref{eq:resolventdiff}, and  $r_{\widehat{\Lambda},\Xi}^f$ as in \eqref{eq:ratint} with $f(\lambda) = \lambda / (\zeta+\lambda)$. Then
	\begin{align*}
		\frac{\lambda}{\zeta+\lambda} - r_{\widehat{\Lambda},\Xi}^f(\lambda) = \frac{\lambda}{\zeta+\lambda}\frac{r_{\Lambda,\Xi}(\lambda)}{r_{\Lambda,\Xi}(-\zeta)}.
	\end{align*}
\end{lemma}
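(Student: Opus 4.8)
The plan is to reduce the claim to the Hermite-Walsh formula \eqref{eq:resolventdiff}, which is already available, by exploiting the affine identity
\[
	\frac{\lambda}{\zeta+\lambda} = 1 - \zeta\,\frac{1}{\zeta+\lambda}.
\]
Writing $g(\lambda) := (\zeta+\lambda)^{-1}$, the target function decomposes as $f = \mathbf{1} - \zeta g$, and the first step is to transfer this decomposition to the respective rational interpolants on the common node set $\widehat{\Lambda}$ with common denominator $q_k$.

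The key structural observation is that rational interpolation with a \emph{fixed} denominator $q_k$ is linear in the interpolated data. Indeed, the interpolant $p/q_k\in\mathcal{R}_{k,k}$ is characterized by $p(\sigma) = f(\sigma)q_k(\sigma)$ at each node $\sigma\in\widehat{\Lambda}$, which is an ordinary polynomial interpolation problem for $p\in\mathcal{P}_k$ and hence depends linearly on the values $(f(\sigma))_{\sigma\in\widehat{\Lambda}}$. Since the constant $\mathbf{1}$ is reproduced exactly (its interpolant is $q_k/q_k\equiv 1$), uniqueness of the interpolant gives $r_{\widehat{\Lambda},\Xi}^f = 1 - \zeta\, r_{\widehat{\Lambda},\Xi}^g$, so that
\[
	f - r_{\widehat{\Lambda},\Xi}^f = -\zeta\left(g - r_{\widehat{\Lambda},\Xi}^g\right).
\]
Next I would apply \eqref{eq:resolventdiff} to $g$ on the enlarged node set $\widehat{\Lambda}$, which has $k+1$ elements. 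The essential bookkeeping step is to identify $r_{\widehat{\Lambda},\Xi}$: its numerator is $\prod_{\sigma\in\widehat{\Lambda}}(\lambda-\sigma) = \lambda\prod_{j=1}^k(\lambda-\sigma_j)$, whence $r_{\widehat{\Lambda},\Xi}(\lambda) = \lambda\, r_{\Lambda,\Xi}(\lambda)$ and, in particular, $r_{\widehat{\Lambda},\Xi}(-\zeta) = -\zeta\, r_{\Lambda,\Xi}(-\zeta)$. Substituting into \eqref{eq:resolventdiff} yields
\[
	g - r_{\widehat{\Lambda},\Xi}^g = \frac{1}{\zeta+\lambda}\,\frac{\lambda\, r_{\Lambda,\Xi}(\lambda)}{-\zeta\, r_{\Lambda,\Xi}(-\zeta)},
\]
and multiplying by $-\zeta$ produces exactly the asserted identity.

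I expect the main obstacle to be conceptual rather than computational: one must ensure that the affine relation $f = \mathbf 1 - \zeta g$ genuinely passes to the interpolants, which hinges on the linearity of interpolation at fixed denominator together with exactness on constants, and then track carefully how appending the node $0$ promotes $r_{\Lambda,\Xi}$ to $\lambda\,r_{\Lambda,\Xi}$. Well-posedness tacitly requires $0,-\zeta\notin\Xi$ and $-\zeta\notin\Lambda$, while the nodes of $\widehat{\Lambda}$ are automatically distinct since $0\notin\Lambda$ and the $\sigma_j$ are pairwise distinct. As an alternative, one could argue directly: writing $f - r_{\widehat{\Lambda},\Xi}^f$ over the common denominator $(\zeta+\lambda)q_k(\lambda)$, its numerator is a polynomial of degree at most $k+1$ vanishing at the $k+1$ nodes of $\widehat{\Lambda}$, hence a constant multiple of $\lambda\prod_{j=1}^k(\lambda-\sigma_j)$; the constant is then pinned down by matching residues at the simple pole $\lambda=-\zeta$, where $r_{\widehat{\Lambda},\Xi}^f$ is analytic and $\operatorname{Res}_{\lambda=-\zeta} f = -\zeta$.
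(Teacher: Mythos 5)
Your proposal is correct, and your primary argument takes a genuinely different route from the paper. The paper argues directly: it writes $\lambda/(\zeta+\lambda) - r_{\widehat{\Lambda},\Xi}^f(\lambda)$ over the common denominator $(\zeta+\lambda)q_k(\lambda)$, observes that the interpolation conditions force the numerator to be $c(\zeta)\,\lambda\prod_{j=1}^k(\lambda-\sigma_j)$, and pins down $c(\zeta)=r_{\Lambda,\Xi}(-\zeta)^{-1}$ by multiplying through by $(\zeta+\lambda)/\lambda$ and evaluating at $\lambda=-\zeta$ --- which is precisely the ``alternative'' you sketch in your last sentences. Your main route instead reduces the Bernstein kernel to the resolvent kernel via $f=\mathbf{1}-\zeta g$, transfers this affine relation to the interpolants using linearity of fixed-denominator interpolation and exactness on constants, and then invokes the already established formula \eqref{eq:resolventdiff} on the enlarged node set $\widehat{\Lambda}$, with the bookkeeping identity $r_{\widehat{\Lambda},\Xi}(\lambda)=\lambda\,r_{\Lambda,\Xi}(\lambda)$ and hence $r_{\widehat{\Lambda},\Xi}(-\zeta)=-\zeta\,r_{\Lambda,\Xi}(-\zeta)$; the factors of $-\zeta$ cancel exactly as you claim. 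What your reduction buys is conceptual economy (no new residue computation; the result is literally the resolvent case in disguise) at the cost of having to justify the linearity-plus-exactness step; the paper's direct argument is shorter and self-contained. Your remarks on well-posedness ($0,-\zeta\notin\Xi$, $-\zeta\notin\Lambda$, distinctness of $\widehat{\Lambda}$) are accurate and are implicit in the paper as well.
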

\begin{proof}
	It is easily verified that $\lambda / (\zeta+\lambda) - r_{\widehat{\Lambda},\Xi}^f(\lambda)$ is a rational function of degree $(k+1,k+1)$. Thanks to the interpolation property, we have
	\begin{align*}
		\frac{\lambda}{\zeta+\lambda} - r_{\widehat{\Lambda},\Xi}^f(\lambda) = \frac{c(\zeta)\lambda\prod_{j=1}^{k}(\lambda-\sigma_j)}{(\zeta+\lambda)\prod_{j=1}^{k}(\lambda-\xi_j)} = \frac{c(\zeta)\lambda}{\zeta+\lambda}r_{\Lambda,\Xi}(\lambda)
	\end{align*} 
	for some $\zeta$-dependent constant $c(\zeta)\in\C$. Multiplying both sides with $(\zeta+\lambda) / \lambda$ and setting $\lambda = -\zeta$ reveals $c(\zeta) = r_{\Lambda,\Xi}(-\zeta)^{-1}$, which completes the proof.
\end{proof}
Lemma \ref{lemma:hermitewalsh2} is instrumental for the proof of the following theorem, which establishes a connection between the approximation of the Bernstein kernel and Zolotar\"ev's minimal deviation problem.
\begin{theorem}
	\label{thm:completebernstein}
	Let $\zeta\in\R_0^+$, $C$ as in Theorem \ref{thm:rkm_opt}, $V$ a basis of $\mathcal{Q}^\Xi_{k+1}(L,\mathbf{b})$ with pairwise distinct poles $\Xi\subset-\Sigma$, $r_\Xi(\lambda)$ as in \eqref{eq:product}, and $L_{k+1} = V^\dagger LV$. Then there holds
	\begin{align*}
		\norm{L(\zeta I+L)^{-1}\mathbf{b} - VL_{k+1}(\zeta I_{k+1} + L_{k+1})^{-1}V^\dagger\mathbf{b}} \leq 2C\frac{\lambda_{\max}}{\zeta+\lambda_{\max}} \norm{\mathbf{b}} \norm{r_\Xi}_{\Sigma}. 
	\end{align*}
\end{theorem}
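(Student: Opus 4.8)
The plan is to read the left-hand side as the rational Krylov error for the complete Bernstein kernel $f(\lambda) = \lambda/(\zeta+\lambda)$ and to control it through the quasi-optimality estimate of Theorem~\ref{thm:rkm_opt}, in exact analogy to the proof of Theorem~\ref{thm:resolvent} but replacing the resolvent identity \eqref{eq:resolventdiff} by its Bernstein counterpart from Lemma~\ref{lemma:hermitewalsh2}. First I would observe that, by functional calculus, $VL_{k+1}(\zeta I_{k+1}+L_{k+1})^{-1}V^\dagger\mathbf{b} = Vf(L_{k+1})V^\dagger\mathbf{b}$, which is well defined because the eigenvalues of $L_{k+1}$ lie in $\Sigma\subset\R^+$ whereas $-\zeta\le 0$. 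Theorem~\ref{thm:rkm_opt} then gives
\begin{align*}
	\norm{f(L)\mathbf{b} - VL_{k+1}(\zeta I_{k+1}+L_{k+1})^{-1}V^\dagger\mathbf{b}} \le 2C\norm{\mathbf{b}}\min_{p\in\mathcal{P}_k}\norm{f - p/q_k}_{\Sigma}.
\end{align*}

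Next I would exhibit one admissible competitor in this minimum. Taking $\Lambda = -\Xi$ (so that $r_{\Lambda,\Xi} = r_{-\Xi,\Xi} = r_\Xi$ by \eqref{eq:product}) and $\widehat{\Lambda} = \Lambda\cup\{0\}$, the interpolant $r_{\widehat{\Lambda},\Xi}^f$ of Lemma~\ref{lemma:hermitewalsh2} interpolates $f$ at $k+1$ nodes with denominator $q_k$, hence by \eqref{eq:ratint} it lies in $\mathcal{R}_{k,k}$ and is of the admissible form $p/q_k$ with $p\in\mathcal{P}_k$. This is the step I expect to be the crux: the extra interpolation node at $0$ is exactly what keeps the numerator degree at $k$, so that the Hermite-Walsh interpolant genuinely competes in the minimization; this availability of a node at $0$ is precisely the role of $\mathbf{b}\in\mathcal{Q}_{k+1}^{\Xi}(L,\mathbf{b})$ flagged in the remark preceding Lemma~\ref{lemma:hermitewalsh2}, and it is where the Bernstein case departs from the resolvent case. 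Bounding the minimum by this single choice and applying Lemma~\ref{lemma:hermitewalsh2} yields
\begin{align*}
	\min_{p\in\mathcal{P}_k}\norm{f - p/q_k}_{\Sigma} \le \norm{f - r_{\widehat{\Lambda},\Xi}^f}_{\Sigma} = \frac{1}{|r_\Xi(-\zeta)|}\sup_{\lambda\in\Sigma}\left|\frac{\lambda}{\zeta+\lambda}\,r_\Xi(\lambda)\right| \le \frac{\norm{r_\Xi}_{\Sigma}}{|r_\Xi(-\zeta)|}\norm{\tfrac{\lambda}{\zeta+\lambda}}_{\Sigma}.
\end{align*}

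Finally I would estimate the two remaining factors. Since $\zeta\ge 0$, the map $\lambda\mapsto \lambda/(\zeta+\lambda)$ has nonnegative derivative $\zeta/(\zeta+\lambda)^2$ on $\Sigma$ and is therefore nondecreasing, whence $\norm{\lambda/(\zeta+\lambda)}_{\Sigma} = \lambda_{\max}/(\zeta+\lambda_{\max})$. For the denominator, writing $\xi_j = -\eta_j$ with $\eta_j\in\Sigma\subset\R^+$ gives $|r_\Xi(-\zeta)| = \prod_{j=1}^k (\zeta+\eta_j)/|\zeta-\eta_j| \ge 1$, exactly the bound already used in Theorem~\ref{thm:resolvent}. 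Chaining the three estimates delivers the asserted inequality. The boundary case $\zeta = 0$, which is excluded in Lemma~\ref{lemma:hermitewalsh2}, I would dispatch separately and trivially: there $f\equiv 1$, so $L(\zeta I+L)^{-1}\mathbf{b} = \mathbf{b}$ and its Krylov approximation equals $VV^\dagger\mathbf{b} = \mathbf{b}$ (as $\mathbf{b}$ lies in $\mathcal{Q}_{k+1}^{\Xi}(L,\mathbf{b})$), so the left-hand side vanishes while the right-hand side is nonnegative.
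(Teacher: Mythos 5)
Your proposal is correct and follows essentially the same route as the paper: quasi-optimality from Theorem~\ref{thm:rkm_opt}, the competitor $r_{\widehat{\Lambda},\Xi}^f$ from Lemma~\ref{lemma:hermitewalsh2} with $\Lambda=-\Xi$, monotonicity of $\lambda\mapsto\lambda/(\zeta+\lambda)$, the bound $|r_\Xi(-\zeta)|\geq 1$, and the trivial treatment of $\zeta=0$. Your added remarks on why the extra node at $0$ keeps the numerator in $\mathcal{P}_k$ make explicit what the paper leaves implicit, but the argument is the same.
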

\begin{proof}
	If $\zeta = 0$, the claim trivially holds. Let therefore $\zeta>0$. In analogy to the proof of Theorem \ref{thm:resolvent}, we make use of \eqref{eq:quasioptimal} to observe
	\begin{align*}
		\norm{L(\zeta I+L)^{-1}\mathbf{b} - VL_{k+1}(\zeta I_{k+1} + L_{k+1})^{-1}V^\dagger\mathbf{b}} \leq 2C\norm{\mathbf{b}}\min_{p\in\mathcal{P}_k}\norm{\frac{\lambda}{\zeta+\lambda} - \frac{p(\lambda)}{q_k(\lambda)}}_{\Sigma}.
	\end{align*} 
	Let now $r_{\widehat{\Lambda},\Xi}^f$ be defined as in Lemma \ref{lemma:hermitewalsh2} with $\Lambda = -\Xi$. Then there holds
	\begin{align*}
		\norm{L(\zeta I+L)^{-1}\mathbf{b} - VL_{k+1}(\zeta I_{k+1} + L_{k+1})^{-1}V^\dagger\mathbf{b}} \leq 2C\frac{\lambda_{\max}}{\zeta+\lambda_{\max}} \norm{\mathbf{b}} \frac{\norm{r_\Xi}_{\Sigma}}{|r_{\Xi}(-\zeta)|}.
	\end{align*}
	The claim follows from the observation $|r_\Xi(-\zeta)|\geq 1$ for all $\zeta\in\R^+$.
\end{proof}

The treatment of the exponential kernel is more delicate. In line with \cite{Druskin2009} and \cite{Robol2020}, the idea is to bring $e^{-\zeta L}$ in the form of \eqref{eq:markov} via inverse Laplace transform, that is,
\begin{align}
	\label{eq:ExponentialLaplace}
	e^{-\zeta L} = \frac{1}{2\pi i}\int_{i\R} e^{\zeta z} (z I+ L)^{-1}\, dz.
\end{align}
This allows us to leverage our knowledge about the resolvent to gain insights into the approximability of the exponential function.
\begin{theorem}
	\label{thm:exponential}
	Let $\zeta\in\R_0^+$, $C$ as in Theorem \ref{thm:rkm_opt}, $V$ a basis of $\mathcal{Q}^\Xi_{k+1}(L,\mathbf{b})$ with pairwise distinct poles $\Xi\subset-\Sigma$, $r_\Xi(\lambda)$ as in \eqref{eq:product}, and $L_{k+1} = V^\dagger LV$. Then there holds
	\begin{align*}
		\norm{e^{-\zeta L}\mathbf{b} - Ve^{-\zeta L_{k+1}}V^\dagger\mathbf{b}} \leq 8C\gamma_{k} \norm{\mathbf{b}} \norm{r_\Xi}_{\Sigma} , \qquad \gamma_{k} := 2.23 + \frac{2}{\pi}\ln\left(4k\sqrt{\frac{\lambda_{\max}}{\lambda_{\min}\pi}}\right).
	\end{align*}
\end{theorem}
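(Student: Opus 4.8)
The plan is to transfer the analysis of the resolvent (Theorem~\ref{thm:resolvent}) to the exponential through the inverse Laplace representation \eqref{eq:ExponentialLaplace}. Applying the same formula to $L_{k+1}$, whose rational Ritz values lie in $\Sigma\subset\R^+$ so that $i\R$ separates them from their negatives as the contour requires, and pulling the fixed matrices $V,V^\dagger$ inside the integral by linearity, I would write the error (for $\zeta>0$; the case $\zeta=0$ is trivial since $\mathbf b\in\mathcal Q^\Xi_{k+1}(L,\mathbf b)$) as
\begin{align*}
	e^{-\zeta L}\mathbf b - Ve^{-\zeta L_{k+1}}V^\dagger\mathbf b = \frac{1}{2\pi i}\int_{i\R} e^{\zeta z}\,E(z)\,dz, \qquad E(z) := (zI+L)^{-1}\mathbf b - V(zI_{k+1}+L_{k+1})^{-1}V^\dagger\mathbf b.
\end{align*}
Thus the exponential error is an average of resolvent errors of exactly the type controlled in Theorem~\ref{thm:resolvent}, now at the \emph{complex} shift $z=iy$. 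Two simplifications occur on the imaginary axis: $|e^{\zeta z}|=1$, and, since $\Xi\subset-\Sigma$ is real, every factor of $r_\Xi$ in \eqref{eq:product} has unit modulus at purely imaginary argument, so $|r_\Xi(-z)|=1$. Feeding the complex version of \eqref{eq:resolvent_zolobound} with $\Lambda=-\Xi$ (via \eqref{eq:resolventdiff}) into the quasi-optimality estimate \eqref{eq:quasioptimal} then gives the pointwise bound $\norm{E(iy)}\le 2C\norm{\mathbf b}\,\norm{r_\Xi}_\Sigma\big/\sqrt{\lambda_{\min}^2+y^2}$.

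The difficulty is that this bound decays only like $|y|^{-1}$, so the naive estimate $\norm{\cdot}\le\frac{1}{2\pi}\int_{i\R}\norm{E(iy)}\,|dz|$ produces the \emph{divergent} integral $\int_{\R}(\lambda_{\min}^2+y^2)^{-1/2}\,dy$. This logarithmic divergence is the conceptual heart of the proof: although the true integrand $e^{\zeta z}E(z)$ is integrable (one checks $E(z)=O(|z|^{-2})$, because $\mathbf b\in\mathcal Q^\Xi_{k+1}(L,\mathbf b)$ is reproduced to leading order), the worst-case resolvent bound cannot see this cancellation.

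To cure this I would truncate the contour at a threshold $y_0$ and deform the two tails $\{iy:|y|>y_0\}$ into the left half-plane along horizontal rays $z=x\pm iy_0$, $x\le 0$. This is legitimate by Cauchy's theorem since $E$ is analytic off $[-\lambda_{\max},-\lambda_{\min}]\subset\R$ and $|e^{\zeta z}|=e^{\zeta x}\to 0$ in the second and third quadrants. The central vertical segment contributes $\frac{2C\norm{\mathbf b}\norm{r_\Xi}_\Sigma}{\pi}\operatorname{arcsinh}(y_0/\lambda_{\min})\approx\frac{2C\norm{\mathbf b}\norm{r_\Xi}_\Sigma}{\pi}\ln(2y_0/\lambda_{\min})$, which is the source of both the $\tfrac{2}{\pi}\ln(\cdot)$ term and the $\lambda_{\min}$ scale in $\gamma_k$; indeed the choice $y_0=2k\sqrt{\lambda_{\min}\lambda_{\max}/\pi}$ reproduces \emph{exactly} the stated logarithmic argument $4k\sqrt{\lambda_{\max}/(\lambda_{\min}\pi)}$. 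On the deformed rays the factor $e^{\zeta x}$ supplies genuine exponential decay, so that part contributes a bounded remainder independent of $\zeta$, which collapses into the additive constant $2.23$ and, together with the geometric-mean scale $\sqrt{\lambda_{\min}\lambda_{\max}}$, into the $\lambda_{\max}$ under the root.

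I expect the main obstacle to lie precisely in the treatment of these rays. Off the imaginary axis one loses the clean identity $|r_\Xi(-z)|=1$, and $1/|r_\Xi(-z)|$ is a product of $k$ Blaschke-type factors that can grow as $z$ approaches the region of the poles; bounding it uniformly along the rays is what forces the height $y_0$ to scale like $k\sqrt{\lambda_{\min}\lambda_{\max}}$, and it is here that the pole count $k$ enters $\gamma_k$. Once this control of the tail estimate is established, the remaining work—collecting the absolute constants through the segment and the rays (absorbing the Crouzeix factors into the overall $8C$) and confirming the additive $2.23$—is mechanical bookkeeping with deliberately non-optimized constants rather than a further difficulty.
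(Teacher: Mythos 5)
Your starting point coincides with the paper's: represent $e^{-\zeta L}$ via the inverse Laplace transform \eqref{eq:ExponentialLaplace} and reduce to resolvent errors on $i\R$, and you correctly diagnose that the naive estimate $\frac{1}{2\pi}\int_{i\R}\norm{E(iy)}\,dy$ diverges logarithmically. But your proposed cure --- truncating at height $y_0$ and deforming the tails onto horizontal rays $z=x\pm iy_0$, $x\le 0$ --- has two gaps that I believe are fatal as described. First, the ray contribution is \emph{not} bounded independently of $\zeta$: the only decay along the rays comes from $|e^{\zeta z}|=e^{\zeta x}$, while the resolvent bound itself decays only like $1/|x|$ and $|r_\Xi(-z)|\to 1$ as $x\to-\infty$, so the ray integral behaves like $\int_{c}^{\infty}e^{-\zeta u}u^{-1}\,du\sim\ln(1/\zeta)$ as $\zeta\to 0^+$. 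Since Theorem \ref{thm:zolomain} integrates this bound against $\mu_L(\zeta)$ over all of $(0,\infty)$, a $\zeta$-uniform constant is essential and your remainder cannot ``collapse into the additive constant $2.23$.'' Second, off the imaginary axis you lose $|r_\Xi(-z)|=1$; on the rays the factor $1/|r_\Xi(-z)|$ is a product of $k$ terms each of size up to roughly $2|\xi_j|/y_0$ near the pole $\xi_j$ (the poles lie on $-\Sigma$, at distance $y_0$ from the rays), which is not controlled by an absolute constant for any $y_0$ of the size you propose. You flag this as ``the main obstacle'' but supply no argument, and reverse-engineering $y_0$ from the final formula for $\gamma_k$ is not a proof.

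The paper avoids both problems by never leaving $i\R$ and never taking norms inside the integral. Using the exactness property (Lemma \ref{lemma:exactness}) it inserts the rational interpolant $r^f_{-\Xi,\Xi}$ of the resolvent into the contour integral and, via \eqref{eq:resolventdiff}, rewrites the whole error exactly as $h(\zeta,L)\mathbf b - Vh(\zeta,L_{k+1})V^\dagger\mathbf b$ with the \emph{scalar} function
\begin{align*}
h(\zeta,\lambda)=\frac{r_\Xi(\lambda)}{2\pi i}\int_{i\R}\frac{e^{\zeta z}}{(z+\lambda)\,r_\Xi(-z)}\,dz .
\end{align*}
Crouzeix's estimate \eqref{eq:Crouzeix} is then applied once to each term (giving the factor $4C$), and the remaining work is the scalar oscillatory-integral bound $|h(\zeta,\lambda)|\le 2\gamma_k|r_\Xi(\lambda)|$, imported from \cite[Theorem 2]{Robol2020}; there the cancellation you lose (the oscillation of $e^{\zeta z}/r_\Xi(-z)$, both of unit modulus on $i\R$) is exploited directly, which is exactly what tames the logarithmic divergence uniformly in $\zeta\ge 0$ and produces the constants in $\gamma_k$. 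To complete your argument you would need either to reproduce such a stationary-phase/integration-by-parts estimate on $i\R$, or to genuinely control both the $\zeta\to 0^+$ limit and $1/|r_\Xi(-z)|$ on your deformed contour; neither is present.
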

\begin{proof}
	The proof can be essentially taken from \cite[Theorem 2]{Robol2020} by incorporating \eqref{eq:Crouzeix} accordingly. For the sake of completeness, we provide its key ingredients in Appendix~\ref{sec:appendix}.
\end{proof}

\subsection{Approximability of Stieltjes and Bernstein functions}
\label{sec:fapprox}

Theorems \ref{thm:resolvent}, \ref{thm:completebernstein}, and \ref{thm:exponential} affirm that the approximability of the kernel functions \eqref{eq:kernels} can be related to Zolotar\"ev's minimal deviation problem. Due to \eqref{eq:rkmbound}, the same applies to arbitrary functions contained in $\mathcal{SB}$.  
\begin{theorem}
	\label{thm:zolomain}
	Let $L\in\R^{N\times N}$ be positive definite, $f\in\mathcal{SB}$, $C$ as in Theorem \ref{thm:rkm_opt}, $V$ a basis of $\mathcal{Q}^\Xi_{k+1}(L,\mathbf{b})$ with pairwise distinct poles $\Xi\subset -\Sigma$, and $\mathbf{u}_{k+1} = Vf(L_{k+1})V^\dagger\mathbf{b}$. Then
	\begin{align}
		\label{eq:certificate}
		\norm{f(L)\mathbf{b} - \mathbf{u}_{k+1}} \leq 2Cc_{k} \norm{\mathbf{b}} \norm{r_\Xi}_{\Sigma},
	\end{align}
	where $f(0^+) := \lim\limits_{\lambda\to 0^+}f(\lambda)$ and 
	\begin{align*}
		c_{k} := 
		\begin{cases}
			4\gamma_{k}f(0^+), &\text{if }f\in\mathcal{LS}, \\
			f(\lambda_{\min}), &\text{if }f\in\mathcal{CS}, \\
			f(\lambda_{\max}), &\text{if }f\in\mathcal{CB}.
		\end{cases}
	\end{align*}
\end{theorem}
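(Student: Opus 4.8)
The plan is to split into the three Stieltjes/Bernstein cases and, in each, combine the integral representation \eqref{eq:stieltjes} with the appropriate kernel estimate already established in Theorems \ref{thm:resolvent}, \ref{thm:completebernstein}, and \ref{thm:exponential}. The key mechanism is the error bound \eqref{eq:rkmbound}, which moves the norm inside the integral and reduces everything to controlling $\norm{g(\zeta,L)\mathbf{b} - Vg(\zeta,L_{k+1})V^\dagger\mathbf{b}}$ against the density $\mu(\zeta)$. Since each kernel bound already factors as a $\zeta$-dependent prefactor times the common quantity $\norm{r_\Xi}_\Sigma$, the task in each case is simply to integrate the prefactor against $\mu(\zeta)$ and recognize the result as a value of $f$.

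\textbf{The Cauchy-Stieltjes and complete Bernstein cases.} First I would treat $f\in\mathcal{CS}$. Inserting the resolvent bound \eqref{eq:resolventbound} into \eqref{eq:rkmbound} gives a factor $(\zeta+\lambda_{\min})^{-1}$ inside the integral; since $\int_0^\infty \mu_C(\zeta)(\zeta+\lambda_{\min})^{-1}\,d\zeta = f(\lambda_{\min})$ by \eqref{eq:CauchyStieljesInt}, this immediately yields $c_k = f(\lambda_{\min})$. The complete Bernstein case is entirely analogous: Theorem \ref{thm:completebernstein} contributes the factor $\lambda_{\max}/(\zeta+\lambda_{\max})$, and integrating against $\mu_B$ reproduces the representation $\int_0^\infty \lambda_{\max}(\zeta+\lambda_{\max})^{-1}\mu_B(\zeta)\,d\zeta = f(\lambda_{\max})$, giving $c_k = f(\lambda_{\max})$. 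In both cases the monotone $\zeta$-prefactor is bounded by its value at the endpoint of $\Sigma$, but here one does better by keeping it inside the integral so that the integral collapses exactly to $f$ evaluated at $\lambda_{\min}$ or $\lambda_{\max}$.

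\textbf{The Laplace-Stieltjes case.} For $f\in\mathcal{LS}$ I would use the exponential bound from Theorem \ref{thm:exponential}, whose prefactor $8C\gamma_k$ is \emph{independent} of $\zeta$. Substituting into \eqref{eq:rkmbound} pulls $8C\gamma_k\norm{r_\Xi}_\Sigma$ out of the integral, leaving $\int_0^\infty \mu_L(\zeta)\,d\zeta$. The point is that this remaining integral equals $f(0^+) = \lim_{\lambda\to 0^+}\int_0^\infty e^{-\zeta\lambda}\mu_L(\zeta)\,d\zeta = \int_0^\infty \mu_L(\zeta)\,d\zeta$ by monotone convergence, so $c_k = 4\gamma_k f(0^+)$ (the factor $8C$ splitting as $2C\cdot 4$ to match the statement's normalization).

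\textbf{Main obstacle.} The delicate point is the exchange of limit and integral needed to identify $\int_0^\infty\mu_L(\zeta)\,d\zeta$ with $f(0^+)$ in the Laplace-Stieltjes case: one must justify passing $\lambda\to 0^+$ inside the Laplace-Stieltjes integral, which follows from monotone convergence since $e^{-\zeta\lambda}\nearrow 1$ as $\lambda\to 0^+$ and $\mu_L\ge 0$, but requires $f(0^+)$ to be finite for the bound to be meaningful. The remaining care is purely bookkeeping: confirming that the $\zeta$-dependent prefactors in the Cauchy-Stieltjes and Bernstein kernel estimates integrate \emph{exactly} against their respective densities to recover $f$ at the correct spectral endpoint, which is precisely the content of the defining integral representations.
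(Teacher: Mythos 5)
Your proposal is correct and follows essentially the same route as the paper's proof: split into the three cases, insert the kernel bounds from Theorems \ref{thm:resolvent}, \ref{thm:completebernstein}, and \ref{thm:exponential} into \eqref{eq:rkmbound}, and integrate the $\zeta$-dependent prefactors against the respective densities to recover $f(\lambda_{\min})$, $f(\lambda_{\max})$, and $f(0^+)$. Your explicit justification of $\int_0^\infty\mu_L(\zeta)\,d\zeta = f(0^+)$ via monotone convergence is a detail the paper leaves implicit, but otherwise the arguments coincide.
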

\begin{proof}
	Starting with $f\in\mathcal{LS}$, we make use of \eqref{eq:rkmbound} and apply Theorem \ref{thm:exponential} to deduce
	\begin{align*}
		\norm{f(L)\mathbf{b} - \mathbf{u}_{k+1}} &\leq \int_0^\infty \mu_L(\zeta)\norm{ e^{-\zeta L}\mathbf{b} - Ve^{-\zeta L}V^\dagger\mathbf{b}} \, d\zeta\\
		&\leq 8C\gamma_{k} \norm{\mathbf{b}} \norm{r_\Xi}_{\Sigma} \int_0^\infty \mu_L(\zeta) \,d\zeta = 8C\gamma_{k}f(0^+) \norm{\mathbf{b}} \norm{r_\Xi}_{\Sigma}.
	\end{align*}
	If $f\in\mathcal{CS}$, Theorem \ref{thm:resolvent} reveals
	\begin{align*}
		\norm{f(L)\mathbf{b} - \mathbf{u}_{k+1}} &\leq \int_0^\infty \mu_C(\zeta)\norm{ (\zeta I+L)^{-1}\mathbf{b} - V(\zeta I_{k+1} + L_{k+1})^{-1}V^\dagger\mathbf{b}} \, d\zeta \\
		&\leq 2C \norm{\mathbf{b}} \norm{r_\Xi}_{\Sigma} \int_0^\infty \frac{\mu_C(\zeta)}{\zeta+\lambda_{\min}} \,d\zeta = 2Cf(\lambda_{\min})\norm{\mathbf{b}}\norm{r_\Xi}_{\Sigma}.
	\end{align*} 
	Finally, if $f\in\mathcal{CB}$, we conclude by means of Theorem \ref{thm:completebernstein}
	\begin{align*}
		\norm{f(L)\mathbf{b} - \mathbf{u}_{k+1}} &\leq \int_0^\infty \mu_B(\zeta)\norm{ L(\zeta I+L)^{-1}\mathbf{b} - VL_{k+1}(\zeta I_{k+1} + L_{k+1})^{-1}V^\dagger\mathbf{b}} \, d\zeta \\
		&\leq 2C \norm{\mathbf{b}} \norm{r_\Xi}_{\Sigma}  \int_0^\infty \frac{\lambda_{\max}}{\zeta+\lambda_{\max}}\mu_B(\zeta) \,d\zeta = 2Cf(\lambda_{\max}) \norm{\mathbf{b}} \norm{r_\Xi}_{\Sigma}.
	\end{align*}
\end{proof}
Invoking Theorem \ref{thm:zolotarev}, we immediately obtain exponential convergence rates when using the poles of \eqref{eq:zolorealsol} for building the rational Krylov space. For the moment, however, we leave the upper bound in \eqref{eq:certificate} as it is and discuss its convergence properties in Section \ref{sec:rkmfracdiff} in more detail.

\begin{remark}
	\label{rem:f0+}
	If $f\in\mathcal{LS}$, the approximation error is bounded by $f(0^+)$, which is only meaningful if $f(\lambda)$ is bounded when $\lambda\to0^+$. As indicated in \cite[Remark 4]{Robol2020}, this difficulty can be overcome by applying $\tilde{f}(\lambda) := f(\lambda + \eta)\in\mathcal{CS}$ for some $\eta\in(0,\lambda_{\min})$ to the shifted matrix $\tilde{L} := L - \eta I$ at the cost of slower convergence rates.
\end{remark}
If $f\in\mathcal{LS}$ admits a complex continuation to $\C\setminus\R^-$ and satisfies a certain decay condition on $i\R$, it is possible to replace the logarithmic factor $\gamma_k$ in Theorem \ref{thm:zolomain} with an absolute constant. In line with \eqref{eq:ExponentialLaplace}, the key idea is to bring, instead of its kernel, the function $f$ itself via Dunford-Taylor calculus in the form of \eqref{eq:markov} using the complex contour $\Gamma = i\R$. 

\begin{theorem}
	\label{thm:lsimprovement}
	If, under the assumptions of Theorem \ref{thm:zolomain}, $f\in\mathcal{LS}$ admits an extension to $\C\setminus\R^-$, which we call $f$ again, such that
	\begin{align*}
		c_f := \int_{i\R} \left|\frac{f(\zeta)}{\zeta+\lambda_{\min}}\right| \, d\zeta < \infty.
	\end{align*}
	Then there holds
	\begin{align}
		\label{eq:lsimprovement}
		\norm{f(L)\mathbf{b} - \mathbf{u}_{k+1}} \leq \frac{Cc_f}{\pi} \norm{\mathbf{b}} \norm{r_\Xi}_{\Sigma} .
	\end{align}
\end{theorem}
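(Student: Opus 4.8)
The plan is to mimic the strategy behind Theorem \ref{thm:exponential}, but to apply the Dunford--Taylor (holomorphic functional) calculus to $f$ \emph{itself} rather than to its exponential kernel. Since $f$ extends holomorphically to $\C\setminus\R^-$, which contains both the spectral interval $\Sigma$ and the rational Ritz values of $L$ (all lying in $\R^+$), I would first record the contour representations
\[
	f(L)\mathbf{b} = \frac{1}{2\pi i}\int_{i\R} f(z)\,(L - zI)^{-1}\mathbf{b}\, dz, \qquad Vf(L_{k+1})V^\dagger\mathbf{b} = \frac{1}{2\pi i}\int_{i\R} f(z)\,V(L_{k+1} - zI_{k+1})^{-1}V^\dagger\mathbf{b}\, dz,
\]
which is exactly the analogue of \eqref{eq:ExponentialLaplace} and casts $f$ in the Markov form \eqref{eq:markov} with contour $\Gamma = i\R$ (oriented upward). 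Subtracting the two identities and passing the norm inside the integral yields
\[
	\norm{f(L)\mathbf{b} - \mathbf{u}_{k+1}} \le \frac{1}{2\pi}\int_{i\R} |f(z)|\,\norm{(L - zI)^{-1}\mathbf{b} - V(L_{k+1} - zI_{k+1})^{-1}V^\dagger\mathbf{b}}\, |dz|.
\]

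Next I would substitute $\zeta := -z$, so that $(L - zI)^{-1} = (\zeta I + L)^{-1}$ is a genuine resolvent evaluated at the purely imaginary shift $\zeta\in i\R$, and invoke the Zolotar\"ev bound \eqref{eq:resolvent_zolobound} with the interpolation nodes $\Lambda = -\Xi$, for which $r_{\Lambda,\Xi} = r_\Xi$ as in \eqref{eq:product}. This gives, for each $z = -\zeta \in i\R$,
\[
	\norm{(\zeta I + L)^{-1}\mathbf{b} - V(\zeta I_{k+1} + L_{k+1})^{-1}V^\dagger\mathbf{b}} \le 2C\norm{\mathbf{b}}\,\norm{(\zeta+\lambda)^{-1}}_{\Sigma}\frac{\norm{r_\Xi}_\Sigma}{|r_\Xi(-\zeta)|}.
\]
The whole point of choosing $\Gamma = i\R$ is that the last factor and the $\Sigma$-norm simplify dramatically on the imaginary axis. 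Because the poles $\Xi\subset-\Sigma$ are real and negative, for $z \in i\R$ every factor of $r_\Xi(z) = \prod_j (z+\xi_j)/(z-\xi_j)$ has numerator and denominator of equal modulus, so $|r_\Xi(-\zeta)| = |r_\Xi(z)| = 1$. Likewise, since $\lambda_{\min}$ is the point of $\Sigma$ closest to the imaginary axis, $\norm{(\zeta+\lambda)^{-1}}_\Sigma = \operatorname{dist}(-\zeta,\Sigma)^{-1} = |z-\lambda_{\min}|^{-1} = |z+\lambda_{\min}|^{-1}$, the last equality holding because $z$ is imaginary and $\lambda_{\min}$ real.

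Inserting these two identities into the $z$-integral and collapsing the constants $\tfrac{1}{2\pi}\cdot 2C = C/\pi$, the remaining integral is exactly
\[
	\frac{C}{\pi}\norm{\mathbf{b}}\,\norm{r_\Xi}_\Sigma\int_{i\R}\frac{|f(z)|}{|z+\lambda_{\min}|}\,|dz| = \frac{Cc_f}{\pi}\norm{\mathbf{b}}\,\norm{r_\Xi}_\Sigma,
\]
which is \eqref{eq:lsimprovement}; note that the logarithmic factor $\gamma_k$ of Theorem \ref{thm:zolomain} has disappeared precisely because $|r_\Xi|\equiv 1$ on $i\R$ spares us the crude estimates needed when integrating over a real kernel variable.

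The step I expect to be the main obstacle is making the contour representations on the \emph{unbounded} axis $i\R$ rigorous: one has to justify deforming the Dunford--Taylor contour around $\Sigma$ into the imaginary axis plus a large semicircle in the right half-plane and argue that the arc contribution vanishes. Mere boundedness of $f$ on $\R^+$ (inherited from $f\in\mathcal{LS}$) does not by itself kill the arc, so this is where the hypothesis $c_f<\infty$, together with the decay of the holomorphic extension of $f$ at infinity in $\C\setminus\R^-$, must be used; once the representation is secured, finiteness of the final bound is guaranteed directly by $c_f<\infty$.
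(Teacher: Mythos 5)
Your proposal is correct and follows essentially the same route as the paper's proof: represent $f(L)\mathbf{b}-\mathbf{u}_{k+1}$ via a Cauchy/Dunford--Taylor integral over $i\R$, bound the resolvent difference by \eqref{eq:resolvent_zolobound} with $\Lambda=-\Xi$, and exploit that $|r_\Xi|\equiv 1$ on the imaginary axis together with $\norm{(\zeta+\lambda)^{-1}}_\Sigma=|\zeta+\lambda_{\min}|^{-1}$ to collapse the constants to $Cc_f/\pi$. The technical point you flag about justifying the contour on the unbounded axis is legitimate, but the paper's own proof treats it with the same level of informality, simply invoking the complex continuation and Cauchy's formula.
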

\begin{proof}
	In line with the proof of Theorem \ref{thm:resolvent}, we deduce from \eqref{eq:resolvent_zolobound} for any $\zeta\in i\R$
	\begin{align*}
		\norm{(\zeta I + L)^{-1}\mathbf{b} - V(\zeta I_{k+1} + L_{k+1})^{-1}V^\dagger\mathbf{b}} &\le \frac{2C}{|\zeta + \lambda_{\min}|} \norm{\mathbf{b}} \frac{\norm{r_\Xi}_{\Sigma}}{|r_\Xi(-\zeta)|} = \frac{2C}{|\zeta + \lambda_{\min}|} \norm{\mathbf{b}} \norm{r_\Xi}_{\Sigma},
	\end{align*}
	where the last equality is due to $|r_\Xi(-\zeta)| = 1$. We make use of the complex continuation of $f$ and choose the imaginary axis as the contour in Cauchy's formula to deduce
	\begin{align*}
		\norm{f(L)\mathbf b - \mathbf{u}_{k+1}} &\leq \frac{1}{2\pi} \int_{i\R}f(\zeta) \norm{(\zeta I + L)^{-1}\mathbf{b} - V(\zeta I_{k+1} + L_{k+1})^{-1}V^\dagger\mathbf{b}} \,d\zeta \\
		&\leq \frac{C}{\pi} \norm{\mathbf{b}} \norm{r_\Xi}_{\Sigma} \int_{i\R} \left|\frac{f(\zeta)}{\zeta + \lambda_{\min}}\right| \, d\zeta.
        \qedhere
	\end{align*}
\end{proof}

\begin{remark}
    The proof of Theorem \ref{thm:lsimprovement} is closely related to the results of \cite{Bailly2000} which show that the real parameters \eqref{eq:zolopnts} encoded in \eqref{eq:zolorealsol} satisfy at least necessary optimality conditions to minimize $Z_k(\Sigma,i\R)$. As of yet, it is not known whether \eqref{eq:zolorealsol} provides the true global minimum of $Z_k(\Sigma,i\R)$. As shown in \cite[Theorem 4.3]{Druskin2009}, however, any other rational function with possibly complex poles yields at most a two-fold decrease of the error.
\end{remark}

\section{The framework of fractional diffusion}
\label{sec:rkmfracdiff}
The following section represents the core of this paper. We compare and analyze several existing pole selection strategies, provide a certified error bound to assess their quality, and develop two new pole generation algorithms especially tailored to the framework of fractional diffusion. To this end, we apply our analytical findings from Section \ref{sec:RKM} to a few prototypical parametric functions $\ftau$ arising from both stationary and time-dependent problems. In particular, we deal with the question how the rational Krylov surrogates perform in the limit case and whether a uniform error bound can be established.

We start with the stationary case and consider $\ftau(\lambda) = \lambda^{s}$, $s\in(0,1)$, such that $\ftau\in\mathcal{CB}$. Provided a matrix of basis vectors $V$ of $\mathcal{Q}_{k+1}^\Xi(L,\mathbf{b})$, Theorem \ref{thm:zolomain} yields
\begin{align*}
	\norm{L^s\mathbf{b} - VL_{k+1}^sV^\dagger\mathbf{b}} \leq 2C\lambda_{\max}^s\norm{\mathbf{b}} \norm{r_\Xi}_{\Sigma}.
\end{align*}
Due to $\mathbf{b}\in\mathcal{Q}^\Xi_{k+1}(L,\mathbf{b})$ and Lemma \ref{lemma:exactness}, the rational Krylov approximation is exact for $s= 0$ and, assuming $k\geq 1$, $s = 1$, respectively. Provided $\lambda_{\max}\geq 1$, we thus obtain
\begin{align}
	\label{eq:xs}
	\sup_{s\in[0,1]}\norm{L^s\mathbf{b} - VL_{k+1}^sV^\dagger\mathbf{b}} \leq 2C\lambda_{\max} \norm{\mathbf{b}} \norm{r_\Xi}_{\Sigma} .
\end{align}
In a sense, this result shows that the discrete regularity assumptions in \cite{DS:2019} can be relaxed and, unlike prior results, bounds the rational Krylov error uniformly in $s\in[0,1]$.

If $s\in(-1,0)$, we have that $\ftau\in\mathcal{CS}$. The exactness property of the rational Krylov surrogate, however, does not hold for $s = -1$. In particular, the extremal function $f^{-1}(\lambda) = \lambda^{-1}$ is contained in $\mathcal{LS}\setminus\mathcal{CS}$. Thus, a feasible upper bound can only be obtained by Theorem \ref{thm:zolomain} if $s\in(-1,0)$. To circumvent this limitation, we directly bound the error using Theorem \ref{thm:resolvent} with $\zeta = 0$ to conclude
\begin{align*}
	\norm{L^{-1}\mathbf{b} - VL_{k+1}^{-1}V^\dagger\mathbf{b}} \le \frac{2C}{\lambda_{\min}} \norm{\mathbf{b}}\norm{r_{\Xi}}_{\Sigma}.
\end{align*}
All together, we thus obtain
\begin{align}
	\label{eq:x-s}
	\sup_{s\in[0,1]} \norm{L^{-s}\mathbf{b} - VL_{k+1}^{-s}V^\dagger\mathbf{b}} \le 2C\max\{1,\lambda_{\min}^{-1}\}\norm{\mathbf{b}} \norm{r_\Xi}_{\Sigma},
\end{align} 
which, as opposed to \cite{DS:2020} and \cite{DH2021}, bounds the error uniformly in $s$.

The treatment of time-dependent problems is more delicate. Unlike in the stationary case, the function $\ftau(\lambda) = E_{\alpha,\beta}(-t^\alpha\lambda^s)$ alternates membership between $\mathcal{CS}$ and $\mathcal{LS}\setminus\mathcal{CS}$ depending on its parameters (cf.~Lemma~\ref{lemma:fracstieltjes}). Arguing as in \cite[Example 4.3]{Novati2017}, we deduce from Podlubny's inequality \cite[Theorem 1.6]{Podlubny1999} the existence of a constant $c_\alpha>0$ such that 
\begin{align}
	\label{eq:calpha}
	\left|E_{\alpha,\beta}(-t^\alpha \lambda^s)\right| \leq \frac{c_\alpha}{1 + t^\alpha|\lambda|^s}, \rlap{$\qquad \lambda\in i\R,$}
\end{align}
for all $(\alpha, s)\in(0,1]^2$ with $s+\alpha < 2$. This allows us to show the following technical lemma, whose proof we provide in Appendix~\ref{sec:appendix}.
\begin{lemma}
	\label{lemma:calpha}
	Let $\alpha\in(0,1]$, $\beta\geq\alpha$, $t\in\R^+$, $s\in(0,1]$, $s+\alpha<2$, $f(\lambda) = E_{\alpha,\beta}(-t^\alpha \lambda^s)$, $c_\alpha$ as in \eqref{eq:calpha}, and $c_f$ as in Theorem \ref{thm:lsimprovement}. Then there holds
	\begin{align*}
		c_f \leq 2C_{\alpha,s, t} := 2c_\alpha\left(\lambda_{\min}^{-1} + s^{-1}\ln(1+t^{-\alpha})\right).
	\end{align*}
\end{lemma}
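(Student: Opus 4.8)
The goal is to bound
\begin{align*}
	c_f = \int_{i\R} \left|\frac{f(\zeta)}{\zeta+\lambda_{\min}}\right| \, d\zeta, \qquad f(\zeta) = E_{\alpha,\beta}(-t^\alpha\zeta^s),
\end{align*}
and the plan is to feed the pointwise bound \eqref{eq:calpha} into this integral and evaluate the resulting elementary integral. First I would parametrize the imaginary axis by $\zeta = iy$, $y\in\R$, and use \eqref{eq:calpha} together with $|\zeta+\lambda_{\min}| \ge \lambda_{\min}$ on the one hand and $|\zeta + \lambda_{\min}|\ge |y|$ on the other. This suggests splitting the line integral into a region where the constant-denominator bound is sharp and a region where the $|y|$-denominator bound takes over, or alternatively bounding the two factors in the integrand separately: the factor $c_\alpha/(1+t^\alpha|y|^s)$ from the Mittag-Leffler decay and the factor $1/|\zeta+\lambda_{\min}|$ from the resolvent kernel.

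The cleanest route is to bound $|f(iy)| \le c_\alpha/(1+t^\alpha|y|^s)$ and then estimate $\int_{\R}\frac{1}{1+t^\alpha|y|^s}\frac{dy}{|iy+\lambda_{\min}|}$. By symmetry I would reduce to $y>0$ (picking up a factor $2$) and split the range at some threshold, the natural candidate being $y = t^{-1}$ (where $t^\alpha y^s$ crosses order one) or $y=\lambda_{\min}$. The appearance of the two summands $\lambda_{\min}^{-1}$ and $s^{-1}\ln(1+t^{-\alpha})$ in the target bound strongly hints at a split where the first term comes from controlling $\int \frac{dy}{y+\lambda_{\min}}$-type behavior near the origin (giving the $\lambda_{\min}^{-1}$ contribution via $|iy+\lambda_{\min}|\ge\lambda_{\min}$), and the second, logarithmic term comes from the tail where the Mittag-Leffler decay $\sim y^{-s}$ is integrated against $y^{-1}$, producing $\int_{1}^{?} y^{-1-s}$ or a genuine logarithm $\int \frac{dy}{y(1+t^\alpha y^s)}$. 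In particular I expect that after substituting $u = t^\alpha y^s$ (so $du = s t^\alpha y^{s-1}dy$ and $dy/y = du/(su)$), the tail integral collapses to $s^{-1}\int \frac{du}{u(1+u)} = s^{-1}\ln\frac{u}{1+u}$, whose evaluation at the relevant limits yields exactly $s^{-1}\ln(1+t^{-\alpha})$.

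Concretely, I would bound
\begin{align*}
	c_f \le 2c_\alpha\int_0^\infty \frac{1}{1+t^\alpha y^s}\frac{dy}{\sqrt{y^2+\lambda_{\min}^2}},
\end{align*}
split at $y = 1$ (or a comparable constant): on $(0,1)$ use $1+t^\alpha y^s \ge 1$ and $\sqrt{y^2+\lambda_{\min}^2}\ge\lambda_{\min}$ to get a contribution controlled by $c_\alpha\lambda_{\min}^{-1}$; on $(1,\infty)$ use $\sqrt{y^2+\lambda_{\min}^2}\ge y$ and the substitution above to extract the logarithmic term. The bookkeeping has to be arranged so that the two contributions combine into precisely $2c_\alpha(\lambda_{\min}^{-1} + s^{-1}\ln(1+t^{-\alpha}))$; this may require choosing the split point as a function of $t$ (e.g. $y_0 = t^{-1}$) rather than a fixed constant so that the boundary terms telescope correctly.

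The main obstacle I anticipate is the precise matching of the logarithmic term: making the tail integral produce \emph{exactly} $s^{-1}\ln(1+t^{-\alpha})$ rather than merely $O(s^{-1}\log(1/t))$ requires handling the crossover region carefully and likely choosing the split point so that the near-origin estimate contributes the clean $\lambda_{\min}^{-1}$ while the substitution $u=t^\alpha y^s$ is applied over the full half-line $(0,\infty)$ with the resolvent factor replaced by its dominant bound in each regime. A secondary subtlety is that the validity of \eqref{eq:calpha} requires $s+\alpha<2$ and $(\alpha,s)\in(0,1]^2$, which are exactly the hypotheses of the lemma, so I would invoke \eqref{eq:calpha} only after noting these constraints hold; the boundary case $s+\alpha=2$ (i.e. $\alpha=s=1$) is excluded and corresponds to the pure exponential, already covered by Theorem~\ref{thm:exponential}.
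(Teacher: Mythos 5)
Your proposal is correct and takes essentially the same route as the paper's proof: symmetrize to the positive half-line, split at $1$, use $1+t^\alpha y^s\ge 1$ and $|\zeta+\lambda_{\min}|\ge\lambda_{\min}$ on $(0,1)$, and use $|\zeta+\lambda_{\min}|\ge y$ with the substitution $u=t^\alpha y^s$ on $(1,\infty)$. The obstacle you anticipate never materializes: the fixed split point $y=1$ already gives the exact bound, since the tail integral evaluates to $s^{-1}\ln\bigl((1+t^\alpha)/t^\alpha\bigr)=s^{-1}\ln(1+t^{-\alpha})$ and the near-origin piece contributes precisely $2c_\alpha\lambda_{\min}^{-1}$.
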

Provided the parameters meet the requirements of Lemma \ref{lemma:calpha}, Theorem \ref{thm:lsimprovement} immediately reveals
\begin{align}
	\label{eq:MLnonuniform}
	\norm{E_{\alpha,\beta}(-t^\alpha L^s)\mathbf{b} - VE_{\alpha,\beta}(-t^\alpha L_{k+1}^s)V^\dagger\mathbf{b}} \leq \frac{2CC_{\alpha,s,t}}{\pi}\norm{\mathbf{b}}\norm{r_\Xi}_{\Sigma}.
\end{align}
The constant $c_\alpha$ has been quantified in \cite[Proposition 4.6]{Turner2011} and suggests that $C_{\alpha,s,t}$ degenerates whenever $\alpha\to 0$, $s\to 0$, or $t\to 0$, such that \eqref{eq:MLnonuniform} does not allow for uniform boundedness in the parameters. In our experience, the constant $C_{\alpha,s,t}$  is rather pessimistic. Indeed, whenever $\alpha = 0$, $s = 0$, or $t = 0$, we have $\ftau(\lambda) = \ml(-t^\alpha, \lambda^s)\in\mathcal{LS}$, where $\ml$ is defined by \eqref{eq:mlextended}. At the cost of the additional logarithmic factor $\gamma_k$, we can thus consult Theorem \ref{thm:zolomain} to confirm uniform boundedness in the parameters,
\begin{align}
	\label{eq:MLuniform}
	\sup_{\boldsymbol{\tau}\in\Theta}\norm{\ml(-t^\alpha,L^s)\mathbf{b} - V\ml(-t^\alpha,L_{k+1}^s)V^\dagger\mathbf{b}} \leq 8C\gamma_k\norm{\mathbf{b}}\norm{r_\Xi}_{\Sigma}
\end{align}
with $\Theta := [0,1]\times[\alpha,\infty)\times\R_0^+\times[0,1]$.

\subsection{Pole selection strategies}
We now discuss, analyze, and compare a variety of available pole distributions $\Xi\in\{\mathcal{Z}, \mathcal{E}, \mathcal{G}, \mathcal{S}, \mathcal{B}_{\boldsymbol{\tau}}\}$, defined in the following sections, which are suitable to the study of fractional PDEs.

\subsubsection{Zolotar\"ev's poles - $\mathcal{Z}$}
\label{sec:zolo}
A selection of poles that perfectly fits our analytical setting is the one obtained by Zolotar\"ev's minimal deviation problem, that is, $\mathcal{Z} := \{-\mathcal{Z}_1^{(k)},\dots,-\mathcal{Z}_k^{(k)}\}$ with $\mathcal{Z}_j^{(k)}$ as in \eqref{eq:zolopnts} and $[a,b] := [\lambda_{\min}, \lambda_{\max}]$. These poles are computationally convenient as they only require the knowledge of the extremal eigenvalues of $L$. In particular, $\mathcal{Z}$ is parameter-independent and thus allows for an efficient querying of $\boldsymbol{\tau}\mapsto \mathbf{u}_{k+1}\approx\ftau(L)\mathbf{b}$ after the initial computation of the Krylov basis $V$. Thanks to the results given above, we are in a position to quantify their performance with the theoretical key result of this paper. 
\begin{theorem}
	\label{thm:fractionalconv}
	Let $L\in\R^{N\times N}$ be positive definite, $C$ as in Theorem \ref{thm:rkm_opt}, $\mathcal{Z}$ as in Theorem \ref{thm:zolotarev} with $a = \lambda_{\min}$ and $b = \lambda_{\max}$, $C^*$ as in \eqref{eq:Cstar}, $V$ a basis of $\mathcal{Q}_{k+1}^\mathcal{Z}(L,\mathbf{b})$, and $\mathbf{u}_{k+1} = V\ftau(L_{k+1})V^\dagger\mathbf{b}$.
	\begin{itemize}
		\item[1.] If $s\in(0,1)$ and $\ftau(\lambda) = \lambda^s$, then
		\begin{align*}
			\norm{L^s\mathbf{b} - \mathbf{u}_{k+1}} \leq 2C\lambda_{\max}^s e^{-C^*k}\norm{\mathbf{b}}.
		\end{align*} 
		In particular, if $k\ge 1$ and $\lambda_{\max}\geq 1$, we have
		\begin{align*}
			\sup_{s\in[0,1]}\norm{L^s\mathbf{b} - \mathbf{u}_{k+1}} \leq 2C\lambda_{\max} e^{-C^*k}\norm{\mathbf{b}}.
		\end{align*}
		\item [2.] If $\lambda_{\min}\geq 1$, $s\in(0,1)$ and $\ftau(\lambda) = \lambda^{-s}$, then
		\begin{align*}
			\norm{L^{-s}\mathbf{b} - \mathbf{u}_{k+1}} \leq 2C\lambda_{\min}^{-s} e^{-C^*k}\norm{\mathbf{b}}, \qquad \sup_{s\in[0,1]} \norm{L^{-s}\mathbf{b} - \mathbf{u}_{k+1}} \leq 2C e^{-C^*k}\norm{\mathbf{b}}.
		\end{align*} 
		\item[3.] If $\boldsymbol{\tau} := (\alpha, \beta, t, s)\in (0,1]\times[\alpha,\infty)\times\R^+\times(0,1]$, $s+\alpha<2$, and $C_{\alpha,s,t}$ as in Lemma \ref{lemma:calpha}, then
		\begin{align*}
			\norm{E_{\alpha,\beta}(-t^\alpha L^s)\mathbf{b} - \mathbf{u}_{k+1}} \leq \frac{2CC_{\alpha,s,t}}{\pi}e^{-C^*k}\norm{\mathbf{b}}.
		\end{align*}
		Let $\gamma_k$ as in Theorem \ref{thm:zolomain}, $\ftau(\lambda) = \ml(-t^\alpha,\lambda^s)$ defined by \eqref{eq:mlextended}, and $\Theta := [0,1]\times[\alpha,\infty)\times\R_0^+\times[0,1]$, then
		\begin{align*}
			\sup_{\boldsymbol{\tau}\in\Theta}\norm{\ml(-t^\alpha, L^s)\mathbf{b} - \mathbf{u}_{k+1}} \leq 8C\gamma_ke^{-C^*k}\norm{\mathbf{b}}.
		\end{align*}
	\end{itemize}
\end{theorem}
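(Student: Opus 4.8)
The plan is to prove each of the three parts of Theorem \ref{thm:fractionalconv} by combining the uniform Krylov error bounds established earlier in Section \ref{sec:rkmfracdiff} (namely \eqref{eq:xs}, \eqref{eq:x-s}, \eqref{eq:MLnonuniform}, and \eqref{eq:MLuniform}) with the explicit decay estimate $\norm{r_\mathcal{Z}}_\Sigma \le 2e^{-C^*k}$ from Zolotar\"ev's Theorem \ref{thm:zolotarev}. The whole point of choosing $\mathcal{Z}$ as the pole set is that it is the \emph{minimizer} of $\norm{r_\Xi}_\Sigma$ over all $\Xi \subset -\Sigma$ with $|\Xi| = k$, and that its deviation is controlled by the closed-form constant $C^*$. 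So in every case the strategy is identical: take the already-proven certified bound of the form $\norm{\ftau(L)\mathbf{b} - \mathbf{u}_{k+1}} \le (\text{const}) \norm{\mathbf{b}} \norm{r_\Xi}_\Sigma$, specialize $\Xi = \mathcal{Z}$, and substitute $\norm{r_\mathcal{Z}}_\Sigma \le 2e^{-C^*k}$. The factor of $2$ from Theorem \ref{thm:zolotarev} is exactly what converts the bare $\norm{r_\Xi}_\Sigma$ into the clean exponential rate, absorbing into the leading constants ($2C\lambda_{\max}^s \cdot 2 = 4C\lambda_{\max}^s$, but note the stated bounds drop this factor of two, so I would double-check whether the intended reading folds the $2$ differently --- see below).

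For Part 1, I would start from the complete Bernstein bound: since $\lambda^s \in \mathcal{CB}$ for $s \in (0,1)$ by Lemma \ref{lemma:fracstieltjes}, Theorem \ref{thm:zolomain} gives $c_k = f(\lambda_{\max}) = \lambda_{\max}^s$, hence $\norm{L^s\mathbf{b} - \mathbf{u}_{k+1}} \le 2C\lambda_{\max}^s \norm{\mathbf{b}} \norm{r_\mathcal{Z}}_\Sigma$, and then insert the Zolotar\"ev estimate. For the uniform-in-$s$ statement I would invoke the exactness argument already given around \eqref{eq:xs}: the Krylov surrogate reproduces $f^0 = 1$ (because $\mathbf{b} \in \mathcal{Q}_{k+1}^\Xi$) and, for $k \ge 1$, reproduces $f^1(\lambda) = \lambda$ via Lemma \ref{lemma:exactness}, so the endpoints $s \in \{0,1\}$ contribute zero error, and with $\lambda_{\max} \ge 1$ the bound $\lambda_{\max}^s \le \lambda_{\max}$ holds across the interior, yielding the uniform estimate. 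Part 2 is analogous but routed through the Cauchy-Stieltjes / resolvent machinery: for $s \in (0,1)$ we have $\lambda^{-s} \in \mathcal{CS}$ with $c_k = f(\lambda_{\min}) = \lambda_{\min}^{-s}$, giving the pointwise bound; the uniform statement uses exactness at $s=0$ together with the $\zeta = 0$ resolvent bound of Theorem \ref{thm:resolvent} to handle $s = 1$ (since $\lambda^{-1} \in \mathcal{LS}\setminus\mathcal{CS}$ escapes Theorem \ref{thm:zolomain}), then bounds $\lambda_{\min}^{-s} \le 1$ using $\lambda_{\min} \ge 1$.

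Part 3 splits into the two regimes already set up in the text. The pointwise Mittag-Leffler estimate follows by inserting $\norm{r_\mathcal{Z}}_\Sigma \le 2e^{-C^*k}$ into \eqref{eq:MLnonuniform}, which in turn rests on Theorem \ref{thm:lsimprovement} and the bound $c_f \le 2C_{\alpha,s,t}$ from Lemma \ref{lemma:calpha}; the hypotheses $s + \alpha < 2$, $t > 0$, $s > 0$ are precisely those under which Lemma \ref{lemma:calpha} applies. The uniform statement over the full $\Theta$ (including the degenerate corners $\alpha = 0$, $s = 0$, or $t = 0$) instead uses the Laplace-Stieltjes route: by Lemma \ref{lemma:fracstieltjes} we always have $\ml(-t^\alpha,\lambda^s) \in \mathcal{LS}$, so Theorem \ref{thm:zolomain} applies with the $\gamma_k$-factor and $f(0^+) = 1$ uniformly, giving \eqref{eq:MLuniform}, into which I again substitute the Zolotar\"ev estimate. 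The main subtlety I anticipate is bookkeeping the constants: the factor-of-two gap between $2C\,c_k\norm{r_\mathcal{Z}}_\Sigma$ with $\norm{r_\mathcal{Z}}_\Sigma \le 2e^{-C^*k}$ and the stated bounds (which carry a bare $e^{-C^*k}$ rather than $2e^{-C^*k}$) suggests the authors are tacitly using a sharper bound $\norm{r_\mathcal{Z}}_\Sigma \le e^{-C^*k}$ or absorbing the constant; I would verify at the outset which normalization of Theorem \ref{thm:zolotarev} is in force so that the displayed constants come out exactly as written, since this is the only place where the argument is not a mechanical substitution.
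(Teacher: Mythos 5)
Your proposal follows exactly the paper's own proof, which simply cites Theorem \ref{thm:zolotarev} together with \eqref{eq:xs}, \eqref{eq:x-s}, \eqref{eq:MLnonuniform}, and \eqref{eq:MLuniform}. The factor-of-two issue you flag is real: substituting $\norm{r_\mathcal{Z}}_\Sigma \le 2e^{-C^*k}$ into those bounds yields constants twice as large as displayed in the theorem, and since the Zolotar\"ev bound is asymptotically sharp this appears to be a (harmless) slip in the paper's stated constants rather than a hidden sharper normalization.
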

\begin{proof}
	This is a direct consequence of Theorem \ref{thm:zolotarev} and \eqref{eq:xs}, \eqref{eq:x-s}, \eqref{eq:MLnonuniform}, and \eqref{eq:MLuniform}, respectively.
\end{proof}
\begin{remark}
	If the exact extremal eigenvalues of $L$ are not available, one typically replaces them by some  numerical approximations $0<\lambda_L\leq \lambda_{\min} < \lambda_{\max}\leq\lambda_U$ to build $\mathcal{Z}$ thereupon. Thanks to \eqref{eq:asymptotic}, the performance of the surrogate $\mathbf{u}_{k+1}$ extracted from $\mathcal{Q}_{k+1}^\mathcal{Z}(L,\mathbf{b})$ deteriorates only logarithmically if the approximations of $\lambda_{\min}$ and $\lambda_{\max}$ become worse.
\end{remark}
A computational inconvenience of Zolotar\"ev's poles is the fact that they are not nested. In particular, all poles, and thus also the Krylov basis, must be recomputed whenever $k$ increases.

\subsubsection{Poles based on EDS - $\mathcal{E}$}
\label{sec:eds}
A nested counterpart to Zolotar\"ev's poles has been developed in \cite{Druskin2009}; see also \cite{Robol2020}. Using equidistributed sequences (EDS) as a building block, the idea is to construct an infinite sequence of poles which asymptotically yield the same convergence rates as the ones obtained by $\mathcal{Z}$. Starting with an arbitrary EDS $(s_j)_{j\in\N}\subset[0,1)$, such as $s_j := j\sqrt{2} - \lfloor j\sqrt{2}\rfloor$, one iteratively constructs the set of poles $\mathcal{E} := \{\xi_1,\dots,\xi_k\}$ via $\xi_j := -\lambda_{\max}\sqrt{t_j}$, where $t_j$ is the root of $g(t) - s_j$ defined by
\begin{align*}
	g(t) := \frac{1}{2M}\int_{\delta^2}^{t} \frac{dy}{\sqrt{(y-\delta^2)y(1-y)}}, \;\quad M:= \int_0^1 \frac{dy}{\sqrt{(1-y^2)(1-(1-\delta^2)y^2)}}, \;\quad \delta := \frac{\lambda_{\min}}{\lambda_{\max}}. 
\end{align*}
The roots $t_1,\dots,t_k$ can be computed numerically, e.g., by Newton's method. Even though the competitiveness of $\mathcal{E}$ has been confirmed for small values of $k$, rigorous upper bounds are lacking so far. Unlike $\mathcal{Z}$, however, $\mathcal{E}$ allows one to build $\mathcal{Q}_{k+1}^{\mathcal{E}}(L,\mathbf{b})$ incrementally and is thus convenient when adaptive accuracy control is required.

\subsubsection{Poles based on weak greedy algorithms - $\mathcal{G}$} 
\label{sec:weakgreedy}
The poles presented in Section \ref{sec:zolo} and \ref{sec:eds} are based on a scalar rational approximation problem obtained by the quasi-optimality property of RKMs. Recent results established in \cite{DH2021} suggest to directly approximate the matrix-kernels using so-called weak greedy algorithms. These algorithms are very popular in the reduced basis literature to alleviate the computational costs when evaluating solutions to parametric PDEs but have not attracted as much attention in RKMs yet. Starting with a basis $V=[\mathbf{v}_1]$ of $\mathcal{Q}_1(L,\mathbf{b}) = \operatorname{span}\{\mathbf{b}\}$, one variant is to inductively select the next pole according to a residual-based error estimator
\begin{align}
\label{eq:weakgreedy}
	\xi_{k+1} := -\operatornamewithlimits{arg\, max}_{\zeta\in\mathcal{T}} \norm{\mathbf{b} - V(\zeta I_{k+1} + L_{k+1})^{-1}V^\dagger\mathbf{b}},
\end{align}
where $\mathcal{T}\subset\C$ is the parameter domain in which we wish to approximate the resolvent. For computational purposes, one typically replaces $\mathcal{T}$ with a fine but finite training set $\mathcal{T}_{\textnormal{train}}\subset\mathcal{T}$. If $f\in\mathcal{CS}$, an intuitive choice for $\mathcal{T}$ is given by the positive real axis. Unfortunately, this requires discretizing an unbounded domain which is difficult to tackle numerically. The estimate \eqref{eq:resolventbound} shows, however, that the error is dictated by $\norm{r_{\Xi}}_{\Sigma}$, whose minimum is attained by Zolotar\"ev's poles. The latter are contained in the spectral interval of $L$ and thus justify the restriction to the bounded domain $\mathcal{T} = \Sigma$. Due to the close resemblance between the Cauchy-Stieltjes and complete Bernstein kernel, it is reasonable to believe that the same choice yields a competitive selection of poles for $f\in\mathcal{CB}$. Even though \eqref{eq:ExponentialLaplace} suggests to choose $\mathcal{T} = i\R$ if $f\in\mathcal{LS}$, the upper bound in \eqref{eq:resolventbound} justifies, similarly to the Cauchy-Stieltjes case, the selection of \textit{real} poles according to \eqref{eq:weakgreedy} using $\mathcal{T} = \Sigma$ for any $f\in\mathcal{SB}$. In light of these observations, we denote with $\mathcal{G}$ the set of poles obtained by \eqref{eq:weakgreedy} with $\mathcal{T} = \Sigma$.

While they are computationally more demanding than conventional pole selection strategies, weak greedy algorithms have the ability to incorporate spectral information about $L$ and the particular vector $\mathbf{b}$. Furthermore, it is known that any basis $V$ of $\mathcal{Q}_{k+1}^\mathcal{G}(L,\mathbf{b})$ satisfies
\begin{align}
\label{eq:greedyerror}
	\sup_{\zeta\in\Sigma}\norm{(\zeta I + L)^{-1}\mathbf{b} - V(\zeta I_{k+1} + L_{k+1})^{-1}V^\dagger\mathbf{b}}\leq ce^{-C^*k}
\end{align}
for some constant $c>0$ and $C^*$ as in Theorem \ref{thm:fractionalconv}; see \cite{Maday2002,DeVore2013,Bonito2020}. Even though we cannot provide a proof that \eqref{eq:greedyerror} implies exponential convergence of the surrogate $\mathbf{u}_{k+1} = V\ftau(L_{k+1})V^\dagger\mathbf{b}$ for $\ftau\in\mathcal{SB}$, our empirical investigations suggest the validity of such a conjecture.

\subsubsection{Spectral adaptive poles - $\mathcal{S}$}
\label{sec:spectraladap}
The pole sets $\mathcal{Z}$ and $\mathcal{E}$ strive for a minimization of the error uniformly over $\Sigma$. The latter might be a crude indicator of the true discretization error if the spectral density of the operator is nonuniform. To counteract this, a variety of spectral adaptive pole generation algorithms have been proposed \cite{Druskin2010, Druskin2011, Guettel2013b}. Their key ingredient is to quantify the error in terms of a rational function involving the poles and rational Ritz values of the RKM. In \cite{Druskin2010}, it is shown that the residual of the resolvent takes the form
\begin{align}
	\label{eq:residual}
	(\zeta I + L)^{-1}\mathbf{b} - V(\zeta I_{k+1} + L_{k+1})^{-1}V^\dagger\mathbf{b} = \frac{1}{r_{\Lambda,\Xi}(-\zeta)}r_{\Lambda,\Xi}(L)\mathbf{b}, 
\end{align} 
where $V$ is a basis of $\operatorname{span}\{(I-\xi_1L)^{-1}\mathbf{b},\dots,(I-\xi_kL)^{-1}\mathbf{b}\}$ with poles $\Xi = \{\xi_1,\dots,\xi_k\}$, $\Lambda = \{\mu_1,\dots,\mu_k\}$ are the rational Ritz values of $L$ on the span of $V$, and $r_{\Lambda,\Xi}$ is defined by \eqref{eq:resolventdiff}. Aiming for the minimization of \eqref{eq:residual}, the pole set $\mathcal{S} := \{\xi_1,\dots,\xi_k\}$ is defined inductively using $|r_{\Lambda,\mathcal{S}}(-\zeta)|^{-1}$ as the objective function over $\Sigma$, i.e., 
\begin{align*}
	\xi_{k+1} := \operatornamewithlimits{arg\,max}_{\lambda\in\Sigma} \prod_{j=1}^{k}\left|\frac{\lambda + \xi_j}{\lambda + \mu_j}\right|,
\end{align*}
where e.g., $\xi_1 := \lambda_{\min}$. The location of the extremum can be estimated using a sufficiently fine discrete training set $\mathcal{T}_{\textnormal{train}}\subset\Sigma$ to extract the poles from. In a sense, this algorithm also aims for an approximation of the resolvent function. Unlike $\mathcal{Z}$ and $\mathcal{E}$, however, the presence of $(\mu_j)_{j=1}^{k}$ allows for better adjustments towards the true discrete spectrum of $L$. Even though no analytical results are available, there is empirical evidence that such spectral methods outperform $\mathcal{Z}$ and $\mathcal{E}$ whenever the operator exhibits a nonuniform spectral density.

\subsubsection{BURA poles - $\mathcal{B}_{\boldmath{\tau}}$}
So far, all presented poles justify their selection based on the approximability of the integral kernels \eqref{eq:kernels}. Inherently different are RKMs based on the best uniform rational approximation (BURA) of $\ftau$ over $\Sigma$. The idea is to use the poles $\{\xi_1^{(k)},\dots,\xi_k^{(k)}\} =: \mathcal{B}_{\boldsymbol{\tau}}$ of the latter for building the rational Krylov space. For computational convenience, we limit ourselves to $\mathcal{B}_{\boldsymbol{\tau}}\subset\R$. In this case, it is known that the BURA exists and is unique if $\ftau$ is continuous in $\Sigma$; see \cite{Achieser1992}. Efficient and numerically stable algorithms for computing BURAs exist \cite{H:2020b} and can be applied as a black-box pole generator. A generic tool to quantify its quality is provided by Theorem \ref{thm:rkm_opt}. Based on the work of Stahl \cite{Stahl2003}, it has been shown (cf.~\cite{DH2021}) that
\begin{align*}
	\norm{L^{-s}\mathbf{b} - VL_{k+1}^{-s}V^\dagger\mathbf{b}} \leq C_s \lambda_{\min}^{-s}e^{-2\pi\sqrt{k s}}\norm{\mathbf{b}},
\end{align*}
where $C_s>0$ is some $s$-dependent constant and $V$ a basis of $\mathcal{Q}_{k+1}^{\mathcal{B}_{\boldsymbol{\tau}}}(L,\mathbf{b})$. Similar results hold for positive powers of the operator,
\begin{align*}
	\norm{L^s\mathbf{b} - VL_{k+1}^{s}V^\dagger\mathbf{b}} \leq C_s \lambda_{\max}^{s}e^{-2\pi\sqrt{k s}}\norm{\mathbf{b}},
\end{align*}
where $V$ is obtained using the poles based on the BURA of $\lambda^s$ in $\Sigma$; see \cite{Stahl2003,Pasciak2020}. Even though these results attest $\mathcal{B}_{\boldsymbol{\tau}}$ inferior convergence rates compared to those of Zolotar\"ev's poles, the numerical experiments in \cite{DH2021} suggest that the opposite is true. This is presumably due to the fact that the above estimates are valid for a spectral interval that is bounded only from one side, whereas in practice we approximate $f^\tau$ in the bounded spectral interval $\Sigma$, allowing for better rates. We are not aware of sharp error estimates for these best rational approximations in bounded intervals.

Rational approximations of the Mittag-Leffler function have been studied in, e.g., \cite{Starovoitov2007}. Explicit convergence rates of BURAs involving $\ftau(\lambda) = E_{\alpha,\beta}(-t^\alpha \lambda^s)$, however, are not available to the best of our knowledge. Apart from the fact that Krylov spaces obtained by $\mathcal{B}_{\boldsymbol{\tau}}$ are not nested, a major drawback is that they are parameter-dependent and thus unfeasible when it comes to querying the solution map $\boldsymbol{\tau}\mapsto \ftau(L)\mathbf{b}$ for multiple parameters.

\subsection{Error certification}
Nested pole sequences like the ones presented in Sections \ref{sec:eds}, \ref{sec:weakgreedy}, and \ref{sec:spectraladap} allow for an adaptive enrichment of the rational Krylov space until the sought accuracy is obtained. Typically, however, these schemes are difficult to analyze and guaranteed error bounds are not available. To mitigate this problem, we make use of our analysis to assess the quality of these poles. Assuming that $\norm{r_\Xi}_{\Sigma}$ is available, the right-hand sides of \eqref{eq:xs}, \eqref{eq:x-s}, \eqref{eq:MLnonuniform}, and \eqref{eq:MLuniform}, or more generally, \eqref{eq:certificate} and \eqref{eq:lsimprovement}, provide a computable upper bound of the rational Krylov error. It is clear that this bound can be crude, in particular for poles which do not weight the error uniformly over $\Sigma$. However, if $\Xi$ in a sense imitates Zolotar\"ev's optimality property, we can expect $\norm{r_\Xi}_{\Sigma}$ to provide an accurate predictor for the true error.

\subsubsection{Computation of $\norm{r_\Xi}_{\Sigma}$}
A conceptually straightforward approach to compute the maximal deviation of $r_\Xi$ is to evaluate its absolute value over a discrete training set $\mathcal{T}_\textnormal{train}\subset\Sigma$ and choose its maximizer as approximation for $\norm{r_\Xi}_{\Sigma}$. Somewhat cumbersomely, the training set must be provided by the user and needs to be chosen sufficiently fine to achieve a good approximation to the true global extremum. To counteract this, we present the following lemma which is instrumental in our computation of $\norm{r_\Xi}_{\Sigma}$.
\begin{lemma}
	\label{lemma:extrema}
	Let $\Xi=\{\xi_1,\dots,\xi_k\}\subset -\Sigma$ be a set of pairwise distinct poles with $\xi_k<\dots<\xi_1$. Then $r_\Xi'(\lambda)$ has exactly $k-1$ zeros $\lambda_1^*,\dots,\lambda_{k-1}^*$ in $\R^+$ that are local extrema of $r_\Xi(\lambda)$. There holds $-\xi_{j}<\lambda_j^*<-\xi_{j+1}$ for all $j = 1,\dots,k-1$ and
	\begin{align}
	\label{eq:deriv}
		r_\Xi'(\lambda) = -2\sum_{j = 1}^{k}\frac{\xi_j}{(\lambda-\xi_j)^2} r_{\Xi_j}(\lambda), \quad r_\Xi''(\lambda) = -2\sum_{j = 1}^k\frac{\xi_j}{(\lambda - \xi_j)^2}\left( r_{\Xi_j}'(\lambda) - \frac{2}{(\lambda-\xi_j)} r_{\Xi_j}(\lambda)\right),
	\end{align}   
	where $\Xi_j$ is the $k-1$ dimensional subset of $\Xi$ obtained by excluding $\xi_j$. 
\end{lemma}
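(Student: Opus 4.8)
The plan is to treat the two differentiation identities and the counting of critical points separately, since the former are purely formal while the latter reduces to a classical interlacing argument for simple partial fractions.

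First I would establish the derivative formulas via logarithmic differentiation. Writing $r_\Xi = p/q$ with $p(\lambda)=\prod_{j}(\lambda+\xi_j)$ and $q(\lambda)=\prod_j(\lambda-\xi_j)$, one has on $\R^+\setminus\{-\xi_1,\dots,-\xi_k\}$
\begin{align*}
	\frac{r_\Xi'(\lambda)}{r_\Xi(\lambda)} = \sum_{j=1}^k\left(\frac{1}{\lambda+\xi_j}-\frac{1}{\lambda-\xi_j}\right) = \sum_{j=1}^k\frac{-2\xi_j}{(\lambda+\xi_j)(\lambda-\xi_j)}.
\end{align*}
Combining this with the factorization $r_\Xi(\lambda)=\tfrac{\lambda+\xi_j}{\lambda-\xi_j}\,r_{\Xi_j}(\lambda)$ and multiplying through by $r_\Xi$ yields the first identity in \eqref{eq:deriv}; differentiating each summand with the product rule then gives the second. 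These steps are routine and carry no obstacle beyond bookkeeping.

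The heart of the lemma is the count of zeros of $r_\Xi'$, and here I would exploit the symmetry of the log-derivative under the substitution $x=\lambda^2$. Since a critical point satisfies $r_\Xi(\lambda)\neq 0$, the zeros of $r_\Xi'$ on $\R^+$ coincide with those of $h(x):=\sum_{j=1}^k c_j/(x-a_j)$, where $c_j:=-2\xi_j>0$ and $a_j:=\xi_j^2$. The ordering $\xi_k<\dots<\xi_1<0$ translates into $0<a_1<\dots<a_k$. Because $h'(x)=-\sum_j c_j/(x-a_j)^2<0$, the function $h$ is strictly decreasing on each interval bounded by consecutive poles; on $(a_j,a_{j+1})$ it runs from $+\infty$ to $-\infty$ and therefore has exactly one simple zero, while on $(0,a_1)$ it is strictly negative and on $(a_k,\infty)$ strictly positive, so no zeros lie outside $[a_1,a_k]$. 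This produces exactly $k-1$ zeros, one in each $(a_j,a_{j+1})=(\xi_j^2,\xi_{j+1}^2)$. Re-expressing through $\lambda=\sqrt{x}$ places $\lambda_j^*$ in $(-\xi_j,-\xi_{j+1})$, and the transversality of each crossing of $h$ means $r_\Xi'$ changes sign there, so every $\lambda_j^*$ is a genuine local extremum.

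The only delicate point is sign bookkeeping, since all poles $\xi_j$ are negative; once the substitution $x=\lambda^2$ is in place, the positivity of the residues $c_j$ makes $h$ a textbook monotone partial fraction whose zeros interlace the poles, and the remaining claims follow immediately.
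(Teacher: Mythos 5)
Your proposal is correct, and it is worth separating the two halves when comparing it to the paper. For the identities \eqref{eq:deriv}, your logarithmic differentiation is computationally the same as what the paper does: the paper applies the generalized product rule $\frac{d}{d\lambda}\prod_j g_j = \sum_j g_j' \prod_{i\neq j} g_i$ directly to $r_\Xi = \prod_j \frac{\lambda+\xi_j}{\lambda-\xi_j}$, and since $\frac{d}{d\lambda}\frac{\lambda+\xi_j}{\lambda-\xi_j} = \frac{-2\xi_j}{(\lambda-\xi_j)^2}$ this yields the first identity in one line; your route through $r_\Xi'/r_\Xi$ arrives at the identical expression. The genuine difference is in the first assertion of the lemma: the paper disposes of the existence, count, and interlacing of the $k-1$ extrema by citing Proposition~2.3 of Druskin et al.\ (reference \cite{Druskin2010}), whereas you give a self-contained proof via the substitution $x=\lambda^2$, reducing $r_\Xi'/r_\Xi$ to the monotone partial fraction $h(x)=\sum_j c_j/(x-a_j)$ with positive residues and reading off exactly one sign change per gap $(a_j,a_{j+1})$ and none outside. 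This buys independence from the external reference at the cost of a page of elementary analysis; your argument is sound, including the sign bookkeeping ($\xi_k<\dots<\xi_1<0$ giving $a_1<\dots<a_k$) and the endpoints behaviour on $(0,a_1)$ and $(a_k,\infty)$. One small step you should make explicit: the passage from zeros of $r_\Xi'$ to zeros of $r_\Xi'/r_\Xi$ requires knowing that no critical point sits at a zero of $r_\Xi$, which holds because the zeros $-\xi_j$ of $r_\Xi$ are simple (the poles are pairwise distinct), so $r_\Xi'(-\xi_j)\neq 0$; you assert this but do not justify it.
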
 
\begin{proof}
	The first part of the poof is a direct consequence of \cite[Proposition 2.3]{Druskin2010}. The identities in \eqref{eq:deriv} follow from the generalized product rule
	\begin{align*}
	\frac{d}{d\lambda}\left(\prod_{j=1}^k g_j(\lambda)\right) = \sum_{j=1}^k \frac{d g_j}{d\lambda}(\lambda)\prod_{\substack{i=1 \\ i\neq j}}^{k} g_i(\lambda),
	\end{align*}
	which holds for any collection of scalar and differentiable functions $(g_j)_{j=1}^k$.
\end{proof}
Thanks to Lemma \ref{lemma:extrema}, it suffices to compare the absolute values of $r_\Xi(\lambda_{\min})$ and $r_\Xi(\lambda_{\max})$ with those obtained by the $k-1$ local extrema $r_\Xi(\lambda_j^*)$ to determine the maximal deviation of $r_\Xi(\lambda)$ in $\Sigma = [\lambda_{\min},\lambda_{\max}]$. We propose to compute $(\lambda_{j}^*)_{j=1}^{k-1}$ by Newton's method on each subinterval utilizing the tools provided by Lemma \ref{lemma:extrema}. Feasible initial values can be obtained by evaluating $r_\Xi(\lambda)$ over a discrete training set $\mathcal{T}^j_\text{train}\subset(-\xi_j, -\xi_{j+1})$, $j = 1,\dots, k-1,$ of small cardinality. Provided that the initial value is sufficiently close to the true zero of $r_\Xi'$, Newton's algorithm is guaranteed to converge to the desired solution, as the following lemma shows.  

\begin{lemma}
	\label{lemma:newton}
    Let $\Xi = \{\xi_1,\dots,\xi_k\}\subset - \Sigma$ be a set of poles with $\xi_k < \dots < \xi_1$ and $\lambda_j^*$ the unique root of $r_\Xi'$ in $(-\xi_j, -\xi_{j+1})$ for some $j\in\{1,\dots,k-1\}$. Then there exists some $\varepsilon>0$ with $U_\varepsilon(\lambda_j^*)\subset(-\xi_j,-\xi_{j+1})$ such that for all initial values $\lambda_{j,0}^*\in U_\varepsilon(\lambda_j^*)$, Newton's method converges to $\lambda_j^*$.
\end{lemma}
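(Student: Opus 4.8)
The plan is to recognize Lemma~\ref{lemma:newton} as an instance of the classical local convergence theorem for Newton's method, whose only nontrivial hypothesis in the present setting is that $\lambda_j^*$ be a \emph{simple} root of $F := r_\Xi'$, i.e.\ that $r_\Xi''(\lambda_j^*)\neq 0$. First I would record the regularity that makes the theorem applicable: since $\Xi\subset-\Sigma\subset\R^-$, all poles $\xi_i$ of $r_\Xi$ lie on the negative real axis, so $r_\Xi$, and hence $F=r_\Xi'$, is real-analytic on the whole open interval $(-\xi_j,-\xi_{j+1})\subset\R^+$; by Lemma~\ref{lemma:extrema} the point $\lambda_j^*$ is interior to this interval and, lying strictly between the two consecutive zeros $-\xi_j$, $-\xi_{j+1}$ of $r_\Xi$, satisfies $r_\Xi(\lambda_j^*)\neq 0$.

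The heart of the argument is the nondegeneracy $r_\Xi''(\lambda_j^*)\neq 0$, for which I would use the logarithmic derivative $h:=r_\Xi'/r_\Xi$. From $r_\Xi'=h\,r_\Xi$ one gets $r_\Xi''=(h'+h^2)\,r_\Xi$; since $r_\Xi'(\lambda_j^*)=0$ and $r_\Xi(\lambda_j^*)\neq 0$ force $h(\lambda_j^*)=0$, this collapses to $r_\Xi''(\lambda_j^*)=h'(\lambda_j^*)\,r_\Xi(\lambda_j^*)$, so it suffices to show $h'(\lambda_j^*)\neq 0$. Differentiating the explicit expression
\[
	h(\lambda)=-2\sum_{i=1}^k\frac{\xi_i}{\lambda^2-\xi_i^2}
\]
yields $h'(\lambda)=4\lambda\sum_{i=1}^k \frac{\xi_i}{(\lambda^2-\xi_i^2)^2}$, and because $\lambda_j^*>0$ while every $\xi_i<0$, each summand is strictly negative; hence $h'(\lambda_j^*)<0$ and $r_\Xi''(\lambda_j^*)\neq 0$, proving that $\lambda_j^*$ is a simple root of $F$.

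With simplicity established, I would close with the textbook Newton argument. The iteration map $g(\lambda)=\lambda-r_\Xi'(\lambda)/r_\Xi''(\lambda)$ is well defined and $C^1$ near $\lambda_j^*$ by $r_\Xi''(\lambda_j^*)\neq 0$ and continuity, it fixes $\lambda_j^*$, and $g'(\lambda)=\frac{F(\lambda)F''(\lambda)}{F'(\lambda)^2}$ vanishes at $\lambda_j^*$ because $F(\lambda_j^*)=0$. By continuity of $g'$ there is an $\varepsilon>0$ with $U_\varepsilon(\lambda_j^*)\subset(-\xi_j,-\xi_{j+1})$ on which $|g'|\le\tfrac12$; then $g$ maps $U_\varepsilon(\lambda_j^*)$ into itself and is a contraction, so the Banach fixed-point theorem guarantees that the Newton iterates converge to $\lambda_j^*$ for every starting value $\lambda_{j,0}^*\in U_\varepsilon(\lambda_j^*)$ (in fact quadratically).

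I expect the nondegeneracy step to be the only real obstacle; everything after it is routine. It is worth noting that the clean sign of $h'$ hinges entirely on the geometry $\Xi\subset\R^-$, so the simplicity of the critical points is a structural feature of real, negative poles rather than of the specific Zolotar\"ev distribution.
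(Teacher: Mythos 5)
Your proof is correct, but it takes a genuinely different route from the paper: the paper disposes of this lemma in one line by invoking Kantorovich's theorem, whereas you give a self-contained local-convergence argument via the contraction mapping principle. The substantive content you add is the nondegeneracy $r_\Xi''(\lambda_j^*)\neq 0$, which you obtain cleanly from the logarithmic derivative $h=r_\Xi'/r_\Xi$: since $r_\Xi(\lambda_j^*)\neq 0$ (the point lies strictly between consecutive zeros $-\xi_j$, $-\xi_{j+1}$) and $h(\lambda_j^*)=0$, one has $r_\Xi''(\lambda_j^*)=h'(\lambda_j^*)r_\Xi(\lambda_j^*)$, and $h'(\lambda)=4\lambda\sum_i \xi_i/(\lambda^2-\xi_i^2)^2<0$ on $\R^+$ because every $\xi_i<0$. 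This is exactly the hypothesis that makes the standard Newton convergence theorem (or the Kantorovich conditions near the root) applicable, and the paper leaves it implicit; your computation shows it is a structural consequence of the poles being real and negative, not of any particular pole distribution. The remaining fixed-point argument ($g'(\lambda_j^*)=F(\lambda_j^*)F''(\lambda_j^*)/F'(\lambda_j^*)^2=0$, hence $|g'|\le\tfrac12$ on a small ball that $g$ maps into itself) is routine and correct. In short: the paper's proof is shorter but purely a citation; yours is longer but elementary, verifies the key hypothesis explicitly, and yields quadratic convergence as a bonus.
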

\begin{proof}
	This is a direct consequence of Kantorovich's theorem \cite{Ezquerro2020}.
\end{proof}
We cannot quantify $\varepsilon$ in Lemma \ref{lemma:newton} to choose $\mathcal{T}^j_\textnormal{train}\subset (-\xi_j,-\xi_{j+1})$ sufficiently fine to guarantee $\lambda^*_{j,0}\in U_\varepsilon(\lambda_j^*)$ and thus convergence of Newton's method in $(-\xi_j,-\xi_{j+1})$. If it does converge, however, we can be certain that its limit yields the desired local extrema of $r_\Xi$.  This observation suggests to start with a coarse training set and, if the iteration does not converge after a few steps or leaves the interval $(-\xi_j,-\xi_{j+1})$, restart Newton with an initial value extracted from a refined training set. Even though this procedure guarantees convergence only after a finite amount of refinements, we observe that the iteration is fairly robust in the initial value and usually converges in a few steps if we choose $|\mathcal{T}_\textnormal{train}^j| = 20$ using equispaced points. We summarize this approach in the following algorithm.

\begin{algorithm}[Error Certification]
	\label{algo:certificate}
	Input: Spectral bounds $0<\lambda_L<\lambda_U$ with $\Sigma\subset[\lambda_L,\lambda_U]$ and a set of poles $\Xi = \{\xi_1,\dots,\xi_k\}\subset -\Sigma$ with $\xi_k < \dots < \xi_1$ and $k\geq2$.
	\begin{itemize}
		\item[1.] For $j = 1,\dots,k-1$ do
		\begin{itemize}
			\item[1.1] choose a discrete training set $\mathcal{T}_\textnormal{train}^j\subset(-\xi_j,-\xi_{j+1})$; e.g., equispaced points with $|\mathcal{T}_\textnormal{train}^j| = 20$   
			\item[1.2] set
			\begin{align*}
				\lambda_{j,0}^* = \operatornamewithlimits{arg\,max}_{\lambda\in\mathcal{T}_\textnormal{train}^j} |r_\Xi(\lambda)|,
			\end{align*}
			\item[1.3] apply Newton's method using \eqref{eq:deriv} and $\lambda_{j,0}^*$ as initial value to compute $\lambda_j^*$ as root of $r_\Xi'$ in $(-\xi_j,-\xi_{j+1})$. If the iteration does not converge in $(-\xi_j,-\xi_{j+1})$, refine $\mathcal{T}_\textnormal{train}^j$ and go back to $1.2$.
		\end{itemize}
		\item[2.] Set $\lambda_0^* = \lambda_L$, $\lambda_k^* = \lambda_U$, and
		\begin{align*}
			\Delta_\Xi = \operatornamewithlimits{max}_{j\in\{0,\dots,k\}} |r_\Xi(\lambda_j^*)|.
		\end{align*}
	\end{itemize}
	Output: Error certificate $\Delta_\Xi = \norm{r_\Xi}_{\Sigma}$ with $\norm{\ftau(L)\mathbf{b} - V\ftau(L_{k+1})V^\dagger\mathbf{b}}\leq 2Cc_k\Delta_\Xi\norm{\mathbf{b}}$ for any $f\in\mathcal{SB}$, $\mathbf{b}\in\R^N$, and $C$, $c_k$ as in Theorem \ref{thm:zolomain}.
\end{algorithm}
Unlike all other poles listed above, $\mathcal{B}_{\boldsymbol{\tau}}$ is not necessarily contained in $\Sigma$ such that Algorithm \ref{algo:certificate} cannot be consulted to obtain a meaningful error indicator. Nevertheless, we can apply Theorem \ref{thm:rkm_opt} to assess the quality of the rational Krylov surrogate obtained by the BURA $r_*^{\boldsymbol{\tau}}$ of $\ftau$ on $\Sigma$ by
\begin{align}
	\label{eq:buraindicator}
	\norm{\ftau(L)\mathbf{b} - \mathbf{u}_{k+1}} \leq 2C\Delta_{\mathcal{B}_{\boldsymbol{\tau}}}.
\end{align} 
where $\Delta_{\mathcal{B}_{\boldsymbol{\tau}}}:= \norm{\ftau - r_*^{\boldsymbol{\tau}}}_{\Sigma}$. Although analytically not available, $\Delta_{\mathcal{B}_{\boldsymbol{\tau}}}$ can be recovered numerically as a by-product while computing $\mathcal{B}_{\boldsymbol{\tau}}$ and is thus directly available. 

\subsection{Novel pole selection algorithms - $\mathcal{A}$ and $\mathcal{F}$}
In real-world scenarios, one is interested in identifying the smallest parameter $k\in\mathbb{N}$ such that the approximation error remains below a user-defined threshold. One possibility to achieve this is to adaptively construct a pole set $\Xi$, compute the error certificate using Algorithm \ref{algo:certificate}, and stop the procedure once the upper bound is smaller than the desired tolerance. The first two stages can be combined if one leverages the information provided by Algorithm \ref{algo:certificate} to build $\mathcal{Q}^\Xi_{k+1}(L,\mathbf{b})$ thereupon. We propose, using a similar concept as the one employed in \cite{Bagby1969, Druskin2010, Druskin2011, Guettel2013b}, two novel pole distributions $\Xi\in\{\mathcal{A},\mathcal{F}\}$, defined via Algorithms \ref{algo:adaptive} and \ref{algo:fulladaptive}, by greedily minimizing $\norm{r_\Xi}_{\Sigma}$.
\begin{algorithm}[Automatic pole selection algorithm - $\mathcal{A}$]
	\label{algo:adaptive}
	Input: Spectral bounds $0<\lambda_L<\lambda_U$ with $\Sigma\subset[\lambda_L,\lambda_U]$
	\begin{itemize}
		\item[1.] Set $\xi_1 = -\lambda_L, \xi_2 = -\lambda_U $, $\mathcal{A} = \{\xi_1, \xi_2\}$, and $k = 2$.
		\item[2.] Perform step 1 from Algorithm \ref{algo:certificate} to
            obtain $(\lambda_j^*)_{j=1}^{k-1}$.
		\item[3.] Set 
		\[
            \xi_{k+1} = -\operatornamewithlimits{arg\,max}_{\lambda \in\{\lambda_1^*,\dots,\lambda_{k-1}^*\}} |r_\Xi(\lambda)|
        \]
        and $\mathcal{A} = \mathcal{A}\cup\{\xi_{k+1}\}$. If the maximum is attained by several indices, choose one of them.
		\item[4.] Relabel the poles so that $\xi_{k+1} < \dots < \xi_1$.
		\item[5.] Set $k = k+1$ and go back to step $2$ until the desired accuracy is obtained. 
	\end{itemize}
\end{algorithm}

The advantage of this approach compared to extracting a maximizer of $|r_\mathcal{A}(\lambda)|$ from a discrete training set $\mathcal{T}_\text{train}\subset\Sigma$ is its ability to automatically detect the global maximum, without the risk of missing a critical value. Moreover, we can directly assess the maximal deviation of $r_{\mathcal{A}}$ over $\Sigma$ and thus obtain the certificate provided by Algorithm \ref{algo:certificate} as a by-product. We cannot provide a proof that our greedy algorithm generates an asymptotically optimal solution to Zolotar\"ev's deviation problem. Nevertheless, our empirical findings presented in Section~\ref{sec:numerics} indicate that the algorithm has this property.

In a sense, Algorithm \ref{algo:adaptive} is not fully automatic since it necessitates the availability of some rough spectral bounds. A heuristic approach to overcome this restriction is based on the observation that the eigenvalues of $L_{k+1}$ typically provide good approximations to the extremal eigenvalues of $L$. In light of the fact that the rational Ritz values are contained in $\Sigma$, an automated variant of Algorithm \ref{algo:adaptive} is obtained by iteratively adapting the underlying spectral interval based on the extremal eigenvalues of $L_{k+1}$. With this in mind, we present the fully automatic pole selection strategy for incrementally building the pole set $\mathcal{F}$.    

\begin{algorithm}[Fully automatic pole selection algorithm - $\mathcal{F}$]
	\label{algo:fulladaptive}
	$\,$
	\begin{itemize}
		\item[1.] Compute the Ritz values $\mu_1$ and $\mu_2$ of $L$ on the polynomial Krylov space $\mathcal{K}_2(L,\mathbf{b}) = \operatorname{span}\{\mathbf{b}, L\mathbf{b}\}$. Set $\xi_1 = -\mu_1, \xi_2 = -\mu_2$, $\mathcal{F} = \{\xi_1, \xi_2\}$, and $k = 2$. 
		\item[2.] Perform step 1 from Algorithm \ref{algo:certificate} to
            obtain $(\lambda_j^*)_{j=1}^{k-1}$.
		\item[3.] Set $\lambda_0^* = \mu_1$, $\lambda_k^* = \mu_k$,
		\[
            \xi_{k+1} = -\operatornamewithlimits{arg\,max}_{\lambda \in\{\lambda_0^*,\dots,\lambda_k^*\}} |r_\Xi(\lambda)|
        \]
        and $\mathcal{F} = \mathcal{F}\cup\{\xi_{k+1}\}$. If the maximum is attained by several indices, choose one of them.
		\item[4.] Relabel the poles so that $\xi_{k+1} < \dots < \xi_1$.
		\item[5.] Set $k = k+1$, compute the extremal Ritz values $\mu_1$ and $\mu_k$ of $L$ on $\mathcal{Q}_{k+1}^\mathcal{F}(L,\mathbf{b})$, and go back to step $2$ until the desired accuracy is obtained. 
	\end{itemize}
\end{algorithm}
We conclude this section with a systematic comparison of the presented pole configurations in Table \ref{table:poles}, incorporating (from top to bottom)  
\begin{itemize}
	\item[-] their dependence on $\boldsymbol{\tau}$, which is instrumental in the efficient querying of $\boldsymbol{\tau}\mapsto \ftau(L)\mathbf{b}$, 
	\item[-] their ability to construct a nested sequence of Krylov spaces, i.e., $\mathcal{Q}^\Xi_{k}(L,\mathbf{b})\subset\mathcal{Q}^\Xi_{k+1}(L,\mathbf{b})$,  
	\item[-] the required (user-provided) data to compute $\Xi$,
	\item[-] their ability to adapt to the spectral density of $L$,
	\item[-] their ability to incorporate information about the vector $\mathbf{b}$,
	\item[-] the presence of convergence results for the rational Krylov error,
	\item[-] the availability of an error certificate.
\end{itemize}
Zolotar\"ev's poles are the only poles for which explicit error bounds for arbitrary $\ftau\in\mathcal{SB}$ and $k\in\N$ are available. $\mathcal{E}$ allows for convergence results for all $\ftau\in\mathcal{SB}$ but solely in an asymptotic sense. Contrary, the analysis of RKMs based on $\mathcal{B}_{\boldsymbol{\tau}}$ depends on the particular function and is thus only available for some configurations of $\ftau$. While the dependence of $\mathcal{B}_{\boldsymbol{\tau}}$ on $\boldsymbol{\tau}$ is unfeasible when $\boldsymbol{\tau}\mapsto\ftau(L)\mathbf{b}$ is queried for several values of $\boldsymbol{\tau}$, the particular adjustment to the parameter typically yields superior convergence properties when the approximation of $\ftau(L)\mathbf{b}$ is desired for one fixed $\boldsymbol{\tau}$. Unlike for all other poles, Algorithm \ref{algo:certificate} cannot be consulted to assess the quality of $\mathcal{B}_{\boldsymbol{\tau}}$ and one needs to resort to \eqref{eq:buraindicator} to obtain a meaningful error indicator. The opposite is true for $\mathcal{F}$, however: since $\mathcal{F}$ seeks to avoid the explicit computation of $\lambda_{\min}$ and $\lambda_{\max}$, computing $\norm{r_\mathcal{F}}_{\Sigma}$ to control the rational Krylov error is only of limited use.
\begin{table}[ht]
	\centering
	\begin{tabular}[t]{lccccccc}
		\hline
		Pole set $\Xi$ &$\mathcal{Z}$ &$\mathcal{E}$ &$\mathcal{G}$ &$\mathcal{S}$ &$\mathcal{B}_{\boldsymbol{\tau}}$ &$\mathcal{A}$ &$\mathcal{F}$\\
		\hline
		$\boldsymbol{\tau}$-independence & $\checkmark$ & $\checkmark$ & $\checkmark$ & $\checkmark$ & $\times$ & $\checkmark$ & $\checkmark$\\
		nested & $\times$ & $\checkmark$ & $\checkmark$ & $\checkmark$ & $\times$ & $\checkmark$ & $\checkmark$\\
		user-provided data & $\hat{\Sigma}$ & $\hat{\Sigma}$ & $\hat{\Sigma}$, $\mathcal{T}_{\textnormal{train}}$ & $\hat{\Sigma}$,$ \mathcal{T}_{\textnormal{train}}$ & $\hat{\Sigma}$ & $\hat{\Sigma}$ & -\\
		spectral adaption & $\times$ & $\times$ & $\checkmark$ & $\checkmark$ & $\times$ & $\times$ & $\times$\\
		vector adaption & $\times$ & $\times$ & $\checkmark$ & $\times$ & $\times$ & $\times$ & $\times$ \\
		analysis & $\checkmark$ & asympt. & $\times$ & $\times$ & $\sim$ & $\times$ & $\times$\\
		certificate & $\checkmark$ & $\checkmark$ & $\checkmark$ & $\checkmark$ & $\checkmark$ & $\checkmark$ & $\sim$\\
		\hline
	\end{tabular}
	\caption{Properties of the poles $\mathcal{Z}$, $\mathcal{E}$, $\mathcal{G}$, $\mathcal{S}$, $\mathcal{B}_{\boldsymbol{\tau}}$, $\mathcal{A}$, $\mathcal{F}$, where $\hat{\Sigma} = [\lambda_L,\lambda_U]\supset\Sigma$ is an estimate for the spectral region and $\mathcal{T}_{\textnormal{train}}\subset\hat{\Sigma}$ a sufficiently fine training set.}
	\label{table:poles}
\end{table}

\section{Numerical examples}
\label{sec:numerics}
In the following experiments we underpin the effectiveness of the presented poles and error certificates and compare their performance in the course of a few prototypical space-time fractional diffusion problems. All numerical examples are implemented within the finite element library Netgen/NGSolve\footnote{\url{https://ngsolve.org/}} \cite{Netgen,NGSolve}. The implementation of Zolotar\"ev's poles is performed by means of the special function library from \texttt{Scipy}\footnote{\url{https://docs.scipy.org/doc/scipy/reference/special.html}}. The evaluation of the Mittag-Leffler function is performed using the \texttt{jscatter} software package\footnote{\url{https://pypi.org/project/jscatter/}}. The BURA poles are computed using the implementation of the BRASIL algorithm \cite{H:2020b} contained in the \texttt{baryrat}\footnote{\url{https://github.com/c-f-h/baryrat}} Python package.

Throughout this section, $V_h\subset H_0^1(\Omega)$ denotes a finite element space on the unit square $\Omega = (0,1)^2$ of dimension $N\in\N$ consisting of piecewise linear, globally continuous functions on a quasi-uniform triangular mesh of mesh-size $h = 0.01$. Setting $D=I\in\R^{N\times N}$ and $c = 0$ in \eqref{eq:operator}, we have $\Op = -\Delta$ such that $L = M^{-1}A$,  $M$ and $A$ as in \eqref{eq:femmatrices}, refers to a discrete approximation of the Laplacian. We choose $\mathbf{b}$ to be the coefficient vector of the $L_2$-orthogonal projection of the constant $1$ function onto $V_h$ in each of our experiments. We limit ourselves to time-dependent problems only, i.e., the evaluation of $\ml(-t^\alpha, L^s)\mathbf{b}$, $\ml$ defined by \eqref{eq:mlextended},  with $\beta = 1$. The interested reader is referred to \cite{DS:2019, DS:2020, DH2021} for a detailed investigation of the stationary case.

\subsection{Parameter study}
The goal of this section is to illuminate the impact of the parameters on the rational Krylov approximation. To this end, we introduce the discrete $L_2$-error
\begin{align}
	\label{eq:l2error}
	E(k, \Xi, \boldsymbol{\tau}) := E(k,\Xi, \alpha, t, s) := \norm{\ftau(L)\mathbf{b} - V\ftau(L_{k+1})V^\dagger\mathbf{b}}_{M},
\end{align}
where $\ftau(\lambda) = \textrm{e}_{\alpha,1}(-t^\alpha,\lambda^s)$ is defined by \eqref{eq:mlextended} and $V$ a $M$-orthonormal basis of $\mathcal{Q}_{k+1}^\Xi(L,\mathbf{b})$. For now, we assume $\Xi = \mathcal{Z}$ to be the Zolotar\"ev poles on $[\lambda_L, \lambda_U] := [19, 348475]\supset\Sigma$ obtained by a numerical approximation of $\lambda_{\min}$ and $\lambda_{\max}$. 

The evolution of the error \eqref{eq:l2error} in $t$ is depicted in Figure \ref{fig:evolution} for $k = 7$, $s = 0.75$, and different configurations of $\alpha$. In the case of $\alpha = 0.25$, we are, according to Lemma \ref{lemma:fracstieltjes}, in the Cauchy-Stieltjes regime. Theorem \ref{thm:zolomain} predicts that the error decays like $\mathcal{O}(\textrm{e}_{\alpha,1}(-t^\alpha, \lambda_L^{0.75}))$ when $t\to\infty$, which is precisely what we observe in Figure \ref{fig:evolution}. Since $\textrm{e}_{\alpha,1}(-t^\alpha, \lambda^s)\in\mathcal{LS}\setminus\mathcal{CS}$ whenever $s+\frac{\alpha}{2}\geq 1$, we cannot confirm analytically that the error satisfies such a property if $\alpha\in\{0.5,0.75\}$. However, our numerical experiments suggest that \eqref{eq:l2error} can be bounded using $\ftau(\lambda_L)$ regardless of $(\alpha,s)\in[0,1]^2$. 

Using the same values for the fractional parameters as before, we study the limiting behaviour of \eqref{eq:l2error} for $t \to 0$ in Figure \ref{fig:tlimit}. In accordance with the theory, the error remains uniformly bounded in $t$ for all configurations of $\alpha$ and $s$. Since $\textrm{e}_{\alpha,1}(-t^\alpha,\lambda^s) \equiv 1$ for $t = 0$, in which case the rational Krylov approximation is exact, the error decreases as $t$ approaches zero. The observed rate of convergence is proportional to $t^{\alpha}$. As the example shows, however, the preasymptotic regime might be very large and heavily depends on the parameter $\alpha$.  
\begin{figure}[ht]
	\begin{minipage}[t]{0.485\linewidth}
		\centering							
		\includegraphics[width=\textwidth]{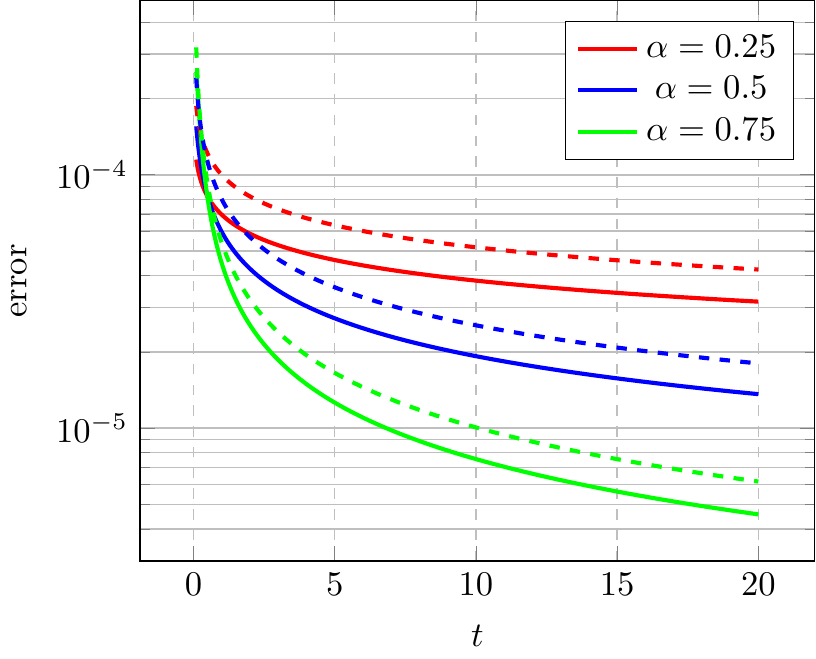}
		\caption{Error $E(7,\mathcal{Z}, \alpha, t, 0.75)$ (solid) for $\alpha = 0.25, 0.5, 0.75$ and $t\in[0.1, 20]$. The reference lines (dashed) are proportional to $\textrm{e}_{\alpha,1}(-t^\alpha, \lambda_L^{0.75})$ according to the respective value of $\alpha$.}
		\label{fig:evolution}
	\end{minipage}
	\hspace{0.1cm}
	\begin{minipage}[t]{0.485\linewidth}
		\centering							
		\includegraphics[width=\textwidth]{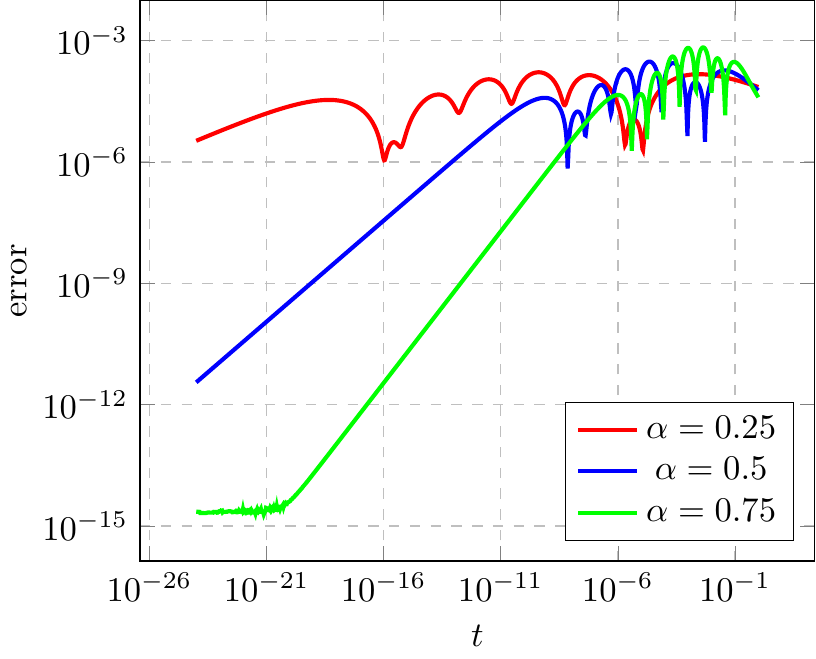}
		\caption{Error $E(7,\mathcal{Z}, \alpha, t, 0.75)$ for $\alpha = 0.25, 0.5, 0.75$ and $t\in[10^{-24},1]$.}
		\label{fig:tlimit}
	\end{minipage}
\end{figure}

To understand the sensitivity of the error with respect to the fractional parameters, we fix $t = 1.5$ to illustrate the spatial error as function in $\alpha$ and $s$ in Figure \ref{fig:parameter}. The quantity $E(7,\mathcal{Z}, \alpha, 1.5, s)$ is evaluated over a discrete parameter grid contained in $[0,1]^2$. Whenever the Euclidean norm of $(\alpha, s)\in\R^2$ is close to $\sqrt{2}$ or $s \ll 1$, we see that the error is small compared to other configurations of the fractional parameters. The former, in a sense, underpins our observations from Figure \ref{fig:evolution} that the error is proportional to $\ftau(\lambda_L)$ irrespectively of $(\alpha, s)\in[0,1]^2$ and thus, assuming $t\geq 1$, decreases whenever $\alpha$ or $s$ approach $1$. 
Contrary to the proportionality to $\ftau(\lambda_L)$, the quality of the surrogate improves also for small values of the spatial fractional parameter. This can be seen as a local approximation effect since $\textrm{e}_{\alpha,1}(-t^\alpha,\lambda^s)\equiv\text{const.}$ if $s = 0$, in which case the rational Krylov approximation is exact. In this regime, the error appears to be less prone to increasing values of $\alpha$.
\begin{figure}[ht]
	\begin{minipage}[t]{0.485\linewidth}
		\centering							
		\includegraphics[width=\textwidth]{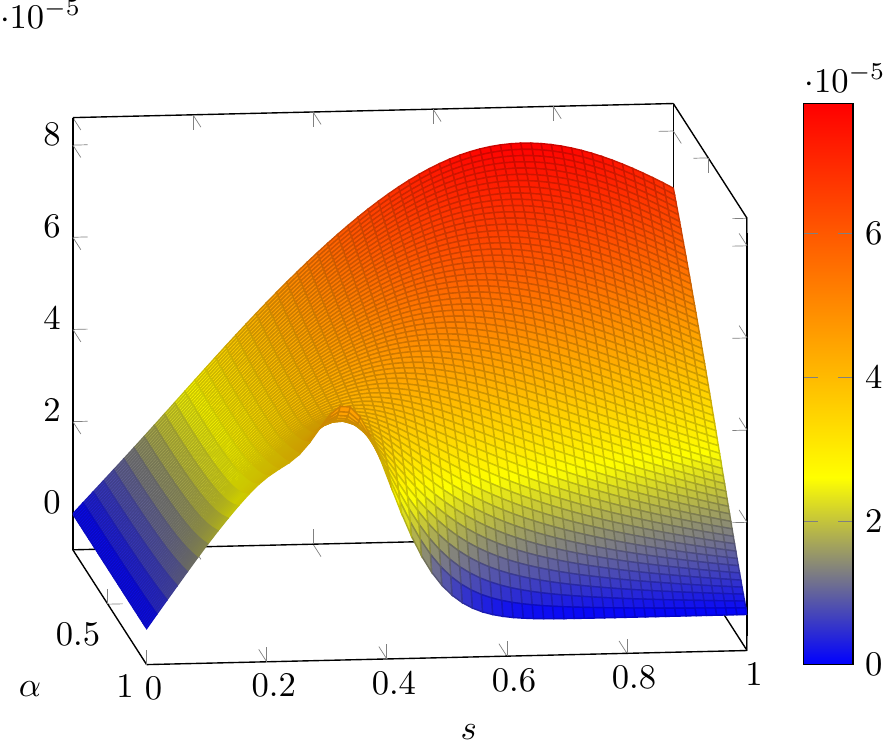}
		\caption{Error $E(7,\mathcal{Z}, \alpha, 1.5, s)$ for $(\alpha,s)\in[0,1]^2$.}
		\label{fig:parameter}
	\end{minipage}
	\hspace{0.1cm}
	\begin{minipage}[t]{0.485\linewidth}
		\centering							
		\includegraphics[width=\textwidth]{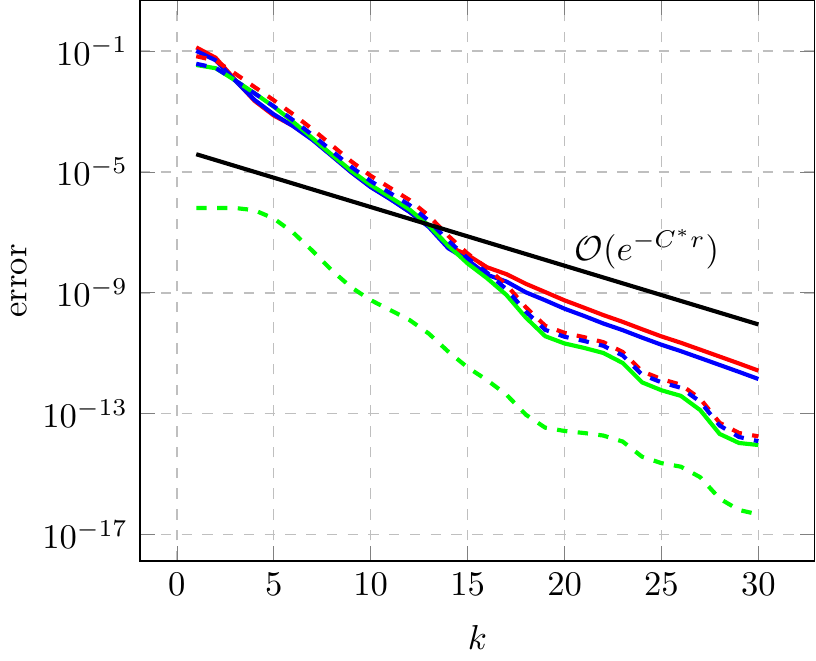}
		\caption{Error $E(k,\mathcal{Z}, \alpha, 1.5, s)$ for $s = 0.25$ (solid) and $s = 0.75$ (dashed) and $\alpha = 0.1, 0.5, 1$ in red, blue, and green, respectively.}
	\label{fig:zolo}
	\end{minipage}
\end{figure}

\subsection{Convergence study}
We now focus on a numerical confirmation of the uniform convergence of \eqref{eq:l2error} when using $\Xi = \mathcal{Z}$ and compare the latter with the poles presented in Section \ref{sec:rkmfracdiff}. Starting with Zolotar\"ev's poles, we fix $t = 1.5$ to monitor \eqref{eq:l2error} as a function in $k$ for various $\alpha$ and $s$ in Figure \ref{fig:zolo}. Lemma \ref{lemma:fracstieltjes} shows that $\ftau\in\mathcal{LS}\setminus\mathcal{CS}$ if and only if $(\alpha, s)\in\{(0.5,0.75),(1,0.75)\}$. Unexpectedly, the quality of the approximation does not deteriorate in these two cases compared to the other configurations of the fractional parameters. Instead, the experiment affirms our numerical observations from Figure \ref{fig:parameter} that the error decreases whenever $\alpha$ or $s$ approach $1$. In particular, the best result among all tested configurations is obtained by $(\alpha,s) = (1,0.75)$. Its approximation slightly outperforms the expected convergence rate of order $\mathcal{O}(e^{-C^*k})$, where $C^*\approx 0.45$. In general, we observe that for increasing values of the fractional parameters the preasymptotic regime becomes larger, such that for $(\alpha,s)$ close to $(1,1)$ the surrogate frequently reaches machine precision before the expected decay rate becomes visible.  

The performance of each RKM is deeply connected with the choice of poles. Therefore, we report the error \eqref{eq:l2error} for different configurations of $\Xi\in\{\mathcal{Z}, \mathcal{E}, \mathcal{G}, \mathcal{S}, \mathcal{B}_{\boldsymbol{\tau}}, \mathcal{A}, \mathcal{F}\}$, defined in Section \ref{sec:rkmfracdiff}, with respect to $[\lambda_L,\lambda_U]$ in Figure \ref{fig:poles1} and \ref{fig:poles2} using $(\alpha,s) = (0.5,0.5)$ and $(\alpha,s) = (1,0.5)$, respectively. In any case, we see that all numerical schemes satisfy exponential convergence rates. The BURA poles provide the best approximation among all tested configurations irrespective of the parameters involved. In order to achieve the same accuracy as $\mathcal{B}_{\boldsymbol{\tau}}$, roughly twice as many iterations are required by all other poles, which perform qualitatively similar compared to each other. This is due to the fact that the eigenvalue distribution of the discrete Laplacian is roughly uniform such that spectral adaptive poles do not significantly differ from poles which measure the error uniformly over $\Sigma$. In particular, we observe that the rational Krylov errors of $\mathcal{Z}$, $\mathcal{E}$, and $\mathcal{A}$ are almost coincident. In a sense, this is reasonable since all of these poles aim directly for a minimization of $\norm{r_\Xi}_{\Sigma}$. 

Theorem \ref{thm:fractionalconv} predicts exponential convergence rates of order $\mathcal{O}(e^{-C^*k})$ whenever $\Xi = \mathcal{Z}$. Only in an asymptotic sense, the same is known to hold for $\mathcal{E}$ but appears to be accurate already for small values of $k$. Similar results can be expected for $\mathcal{A}$ and $\mathcal{F}$, assuming that greedily minimizing $\norm{r_\Xi}_{\Sigma}$ provides an asymptotically optimal solutions to Zolotar\"ev's minimal deviation problem. For $\Xi = \mathcal{G}$, it is just as reasonable to expect an error proportional to $e^{-C^*k}$ since \eqref{eq:greedyerror} shows that the resolvent can be approximated with the aforementioned rate. Contrary to these predictions, however, either of these pole configurations decays slightly faster than $e^{-C^*k}$ and reaches machine precision before the expected rate of decay can be identified. Even though our numerical findings attest $\mathcal{B}_{\boldsymbol{\tau}}$ the best performance among all tested configurations, it becomes worthwhile to use one of the parameter independent pole selection algorithms whenever solutions for several values of $\boldsymbol{\tau}$ are required.
\begin{figure}[ht]
	\begin{minipage}[t]{0.485\linewidth}
		\centering							
		\includegraphics[width=\textwidth]{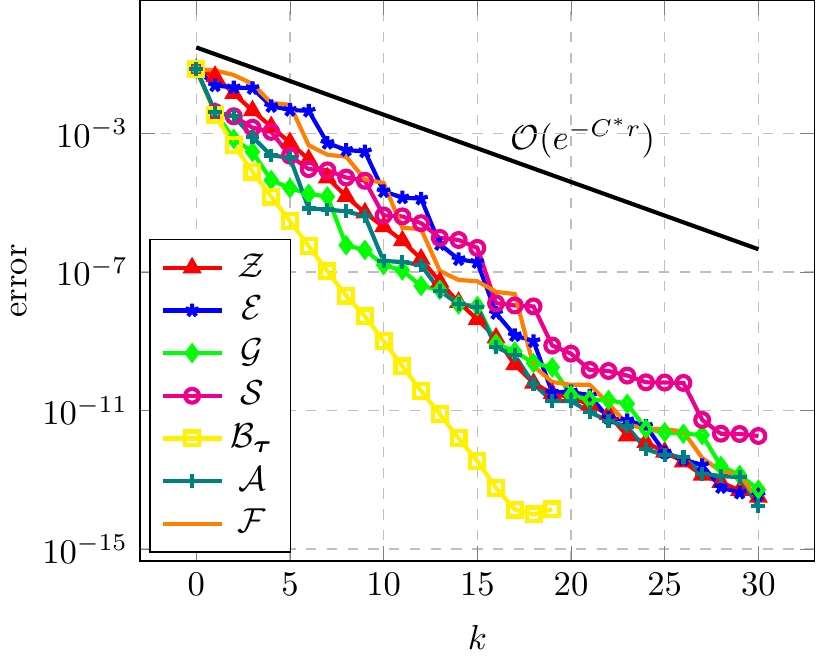}
		\caption{Error $E(k,\Xi, 0.5, 1.5, 0.5)$ with $\Xi\in\{\mathcal{Z}, \mathcal{E}, \mathcal{G}, \mathcal{S}, \mathcal{B}_{\boldsymbol{\tau}}, \mathcal{A}, \mathcal{F}\}$.}
		\label{fig:poles1}
	\end{minipage}
	\hspace{0.1cm}
	\begin{minipage}[t]{0.485\linewidth}
		\centering							
		\includegraphics[width=\textwidth]{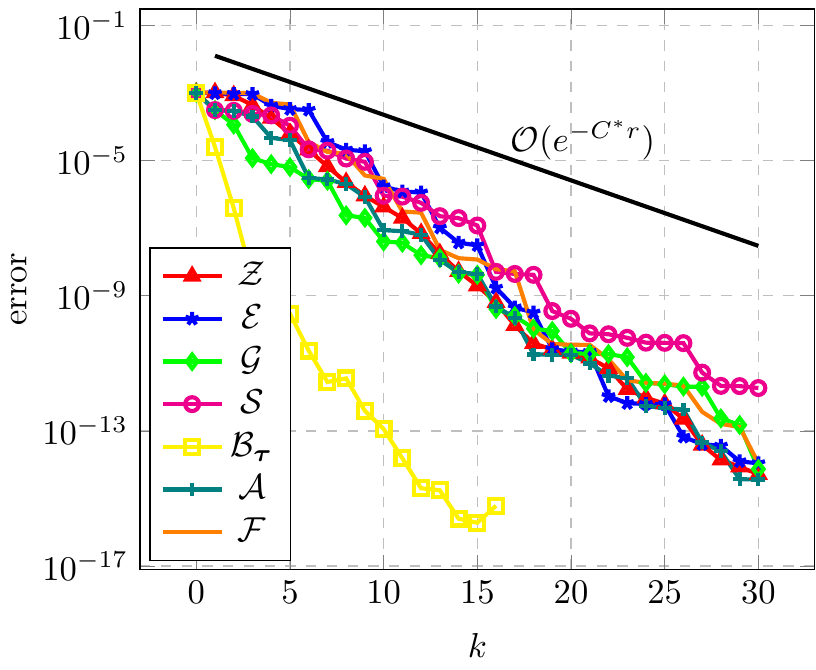}
		\caption{Error $E(k,\Xi, 1, 1.5, 0.5)$ with $\Xi\in\{\mathcal{Z}, \mathcal{E}, \mathcal{G}, \mathcal{S}, \mathcal{B}_{\boldsymbol{\tau}}, \mathcal{A}, \mathcal{F}\}$.}
		\label{fig:poles2}
	\end{minipage}
\end{figure}
\begin{remark}
	In light of Lemma \ref{lemma:fracstieltjes}, Figure \ref{fig:poles1} and \ref{fig:poles2} solely display the effectiveness of the discussed pole distributions for Laplace- and Cauchy-Stieltjes functions. For brevity, we omit the presentation of their performance in the context of complete Bernstein functions, such as $\ftau(\lambda) = \lambda^s$. We note, however, that in this scenario the poles perform qualitatively similar compared to $\ftau(\lambda) = \textrm{e}_{\alpha,1}(-t^\alpha,\lambda^s)$ and refer to \cite{DS:2019} for an investigation based on $\Xi = \mathcal{Z}$.
\end{remark}

We conclude this investigation by studying the performance of the developed error certificate as a predictor for \eqref{eq:l2error}. On the basis of Algorithm \ref{algo:certificate}, we illustrate the quantities $\norm{r_\Xi}_{\Sigma}$ in Figure \ref{fig:certificate} for $\Xi\in\{\mathcal{Z}, \mathcal{E}, \mathcal{G}, \mathcal{S}, \mathcal{A}, \mathcal{F}\}$. Since the certificate \eqref{eq:buraindicator} depends on the particular parameters, we omit $\Xi = \mathcal{B}_{\boldsymbol{\tau}}$ in our discussion. In accordance with the analysis, $r_\mathcal{Z}$ yields the least deviation from zero for all values of $k$ and decays proportionally to $e^{-C^*k}$. Apart from $\mathcal{Z}$, the poles obtained by Algorithm \ref{algo:adaptive} are among the most competitive parameters when it comes to the minimization of the certificate. Initially, $\norm{r_\mathcal{E}}_{\Sigma}$ is larger than the corresponding value for several other pole distributions, but improves as $k$ increases. In a sense, this observation confirms the fact that $\mathcal{E}$ satisfies Zolotar\"ev's minimal deviation property only asymptotically. Also $\norm{r_{\mathcal{G}}}_{\Sigma}$ provides a reasonable predictor for the error. For both $\mathcal{S}$ and $\mathcal{F}$, the certificate appears to be rather pessimistic. In the computation of the latter, the approximation of $\Sigma$ by the extremal rational Ritz values might be crude for small $k$, yielding the maximal deviation of $r_\mathcal{F}$ to be be small in the spectral interval of $L_{k+1}$ but large for values close to the extremal eigenvalues of $L$. The poor performance of $\norm{r_\mathcal{F}}_{\Sigma}$ as an error predictor, however, has no practical relevance since one can resort to $\Xi = \mathcal{A}$ whenever information about the spectral region of $L$ is available.
\begin{figure}[ht]
		\centering							
		\includegraphics[width=0.5\textwidth]{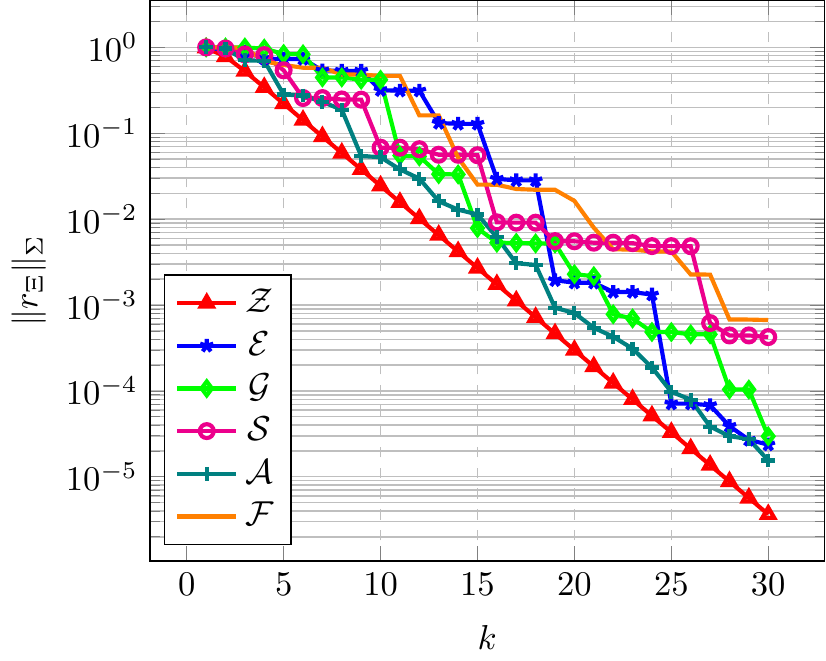}
		\caption{Error certificate $\norm{r_\Xi}_{\Sigma}$ for different poles $\Xi\in\{\mathcal{Z}, \mathcal{E}, \mathcal{G}, \mathcal{S}, \mathcal{A}, \mathcal{F}\}$ in dependence of its cardinality $|\Xi| = k$.}
		\label{fig:certificate}
\end{figure}

\appendix

\section{Appendix}
\label{sec:appendix}

\begin{proof}[Proof of Theorem \ref{thm:exponential}]
	The rational Krylov approximation \eqref{eq:rkm} is independent of the basis \cite[Lemma 3.3]{Guettel:PhD}. W.l.o.g.~we can therefore assume that $V$ is a matrix whose columns form an orthonormal basis of $\mathcal{Q}_{k+1}^\Xi(L,\mathbf{b})$. Using \eqref{eq:ExponentialLaplace}, we rewrite
	\begin{align*}
		e^{-\zeta L}\mathbf{b} - Ve^{-\zeta L_{k+1}}V^\dagger\mathbf{b} = &\frac{1}{2\pi i}\int_{i\R} e^{\zeta z}\left((z I+L)^{-1}\mathbf{b} - V(z I_{k+1} + L_{k+1})^{-1}V^\dagger\mathbf{b}\right)\,dz.
	\end{align*}
    Let $r_{-\Xi,\Xi}^f$ be defined as in \eqref{eq:ratint} with $f(\lambda) = (z+\lambda)^{-1}$. Due to Lemma \ref{lemma:exactness}, we have $r^f_{-\Xi,\Xi}(L)\mathbf{b} = Vr^f_{-\Xi,\Xi}(L_{k+1})V^\dagger\mathbf{b}$. Subtracting $e^{\zeta z}r^f_{-\Xi,\Xi}(L)\mathbf{b}$ and adding $e^{\zeta z}Vr^f_{-\Xi,\Xi}(L_{k+1})V^\dagger\mathbf{b}$ inside the integral combined with \eqref{eq:resolventdiff} yields
	\begin{align*}
		e^{-\zeta L}\mathbf{b} - Ve^{-\zeta L_{k+1}}V^\dagger\mathbf{b} &= \frac{1}{2\pi i}\int_{i\R} e^{\zeta z}(z I+L)^{-1}r_\Xi(L) r_\Xi(-z)^{-1}\mathbf{b}\,dz \\
		&- \frac{1}{2\pi i}V\int_{i\R}e^{\zeta z}(z I_{k+1}+L_{k+1})^{-1}r_\Xi(L_{k+1})r_\Xi(-z)^{-1}V^\dagger\mathbf{b} \, dz
        \\
        &= h(\zeta, L) \mathbf b - V h(\zeta, L_{k+1}) V^\dagger \mathbf b
	\end{align*}
    with $r_\Xi$ as in \eqref{eq:product} and
	\begin{align*}
		h(\zeta,\lambda) := \frac{1}{2\pi i}\int_{i\R} e^{\zeta z}(z +\lambda)^{-1}r_\Xi(\lambda)r_\Xi(-z)^{-1}\, dz.
	\end{align*}
    Crouzeix's estimate \eqref{eq:Crouzeix} and the fact that all rational Ritz values are contained in $\Sigma$ reveal
	\begin{align*}
        \norm{e^{-\zeta L}\mathbf{b} - Ve^{-\zeta L_{k+1}}V^\dagger\mathbf{b}} \leq 4C\norm{\mathbf{b}} \norm{h(\zeta,\cdot)}_{\Sigma}.
	\end{align*}
	As shown in the proof of \cite[Theorem 2]{Robol2020}, there holds
	\begin{align*}
		|h(\zeta,\lambda)| \leq 2\gamma_{k}|r_\Xi(\lambda)|,
	\end{align*}
	which concludes the proof.
\end{proof}

\begin{proof}[Proof of Lemma \ref{lemma:calpha}]
	We use \eqref{eq:calpha} to deduce
	\begin{align*}
		c_f \leq c_\alpha\int_{i\R}\frac{1}{1+t^\alpha|\zeta|^s}\frac{1}{|\zeta+\lambda_{\min}|}\,d\zeta &\le 2c_\alpha\int_1^\infty \frac{d\zeta}{\zeta + t^\alpha\zeta^{1+s}} + 2c_\alpha\int_0^1 \frac{d\zeta}{\lambda_{\min}} \\
		&= 2c_\alpha\int_1^\infty\frac{d\zeta}{\zeta^{1+s}(\zeta^{-s}+t^\alpha)} + \frac{2c_\alpha}{\lambda_{\min}}. 
	\end{align*}
	Using the substitution $\xi = \zeta^{-s} + t^\alpha$ reveals
	\[
		\int \frac{d\zeta}{\zeta^{1+s}(\zeta^{-s}+t^\alpha)} = -\frac{\ln(\zeta^{-s} + t^\alpha)}{s} \;\xrightarrow{\zeta\to\infty}\; -\frac{\ln(t^\alpha)}{s}
    \]
	and thus
	\[
		c_f\leq \frac{2c_\alpha}{s}\left(-\ln(t^\alpha) + \ln(1+t^\alpha)\right) + \frac{2c_\alpha}{\lambda_{\min}} = 2c_\alpha\left( \frac{1}{s}\ln\left(\frac{1+t^{\alpha}}{t^\alpha}\right) + \frac{1}{\lambda_{\min}}\right).
        \qedhere
    \]
\end{proof}

\section*{Acknowledgements}
The first author has been funded by the Austrian Science Fund (FWF) through grant number F 65 and W1245. The second author has been partially supported by the Austrian Science Fund (FWF)
grant P 33956-NBL.

\bibliographystyle{plainnat}
\bibliography{Bibliography}

\end{document}